\newtheorem{theorem}{Theorem}[section]
\newtheorem{lemma}[theorem]{Lemma}
\newtheorem{proposition}[theorem]{Proposition}
\numberwithin{equation}{section}
\def \A {{\mathbb A}}
\def \C {{\mathbb C}}
\def \N {{\mathbb N}}
\def \P {{\mathbb P}}
\def \R {{\mathbb R}}
\def \U {{\mathbb U}}
\def \Z {{\mathbb Z}}
\def \le{{\,\leqslant\,}}
\def \ge{{\,\geqslant\,}}
\def\pmod #1{({\rm mod}\ #1)}
\def\rank {{\rm{rank}}}
\def \codim {{\rm{codim}}}
\begin{document}
\title[On forms in prime variables]{On forms in prime variables}
\author{Jianya Liu}
\address{School of Mathematics and Data Science Institute\\ Shandong University \\ Jinan  250100 \\ China}
\email{jyliu@sdu.edu.cn}
\author{Lilu Zhao}
\address{School of Mathematics \\ Shandong University \\ Jinan  250100 \\ China}
\email{zhaolilu@sdu.edu.cn}

\begin{abstract} Let $F_1,\ldots,F_R$ be homogeneous polynomials of degree $d\ge 2$ with integer coefficients in $n$ variables,
and let $\mathbf{F}=(F_1,\ldots,F_R)$. Suppose that $F_1,\ldots,F_R$ is a non-singular system and $n\ge 4^{d+2}d^2R^5$. We prove that there are infinitely many solutions to $\mathbf{F}(\mathbf{x})=\mathbf{0}$ in prime coordinates if
(i) $\mathbf{F}(\mathbf{x})=\mathbf{0}$ has a non-singular solution
over the $p$-adic units $\U_p$ for all prime numbers $p$, and
(ii) $\mathbf{F}(\mathbf{x})=\mathbf{0}$ has a non-singular solution in the open cube $(0,1)^n$.
\end{abstract}

\maketitle

%{\let\thefootnote\relax\footnotetext{\today}}

{\let\thefootnote\relax\footnotetext{2020 Mathematics Subject
Classification: 11P55 (11P32, 11D45, 11D72)}}

{\let\thefootnote\relax\footnotetext{Keywords: Birch's theorem,
prime variables, circle method}}

{\let\thefootnote\relax\footnotetext{This work is supported by the NSFC grants 12031008 and 11922113.}}

\section{Introduction}

It is a fundamental ambition in number theory to establish the solubility of diophantine equations in many variables. With the exception of diagonal forms and their close kin, the problem of solving a general homogeneous equation is of considerable complexity. The pioneering work of Birch \cite{Birch} provided a method to find integral solutions in such a problem whenever the number of variables is sufficiently large. Birch's work has extensive and significant impact on number theory, and in particular his method has been developed to deal with numerous problems. Skinner \cite{Skinner} (see also \cite{Skinner1997}) considered the generalisation to arbitrary number fields.
Recently, Browning and Heath-Brown \cite{BHb} succeeded in solving forms in many variables and differing degrees.

In this paper, we are interested in finding prime solutions. There is a long history of investigation of prime solutions to linear diophantine equations. Refining the earlier conditional result of Hardy and Littlewood, Vinogradov successfully solved the ternary Goldbach problem
 by showing every sufficiently large odd number can be represented as a sum of three primes. Green and Tao \cite{GT}  made a breakthrough on the existence of arbitrarily long arithmetic progressions in prime numbers. Recently, Zhang \cite{Zhang} and Maynard \cite{Maynard} made significant contributions to the distribution of primes with bounded gaps.  For applications of the classical Vinogradov method to nonlinear diagonal equations, one may refer to \cite{KawadaWooley,Zhao} for the development in the Waring-Goldbach problem.  We shall consider prime solutions to a system of general nonlinear equations of the same degree. For $1\le i\le R$, let
$$F_i(\mathbf{x})=F_i(x_1,\ldots,x_n)\in \Z[x_1,\ldots,x_n]$$
 be homogeneous polynomials of degree $d\ge 2$. Throughout this paper, we use bold face letters to denote vectors whose dimensions are clear from the context.

 Let $\mathbf{F}=(F_1,\ldots,F_R)$. In \cite{BDLW}, Br\"udern et al. studied the equations \begin{align}\label{equationF=0} \mathbf{F}(x_1,\ldots,x_n)=\mathbf{0}\end{align}
 with solutions $(x_1,\ldots,x_n)=(c_1p_1,\ldots,c_np_n)\in \Z^{n}$, where $p_1,\ldots,p_n$ are pairwise distinct primes and $c_1,\ldots,c_n$ are some fixed integers.  In the very recent impressive work of Cook and Magyar \cite{CM}, they were able to handle \eqref{equationF=0} in prime variables by assuming that $n$ is extremely large.

Let $V\subseteq \A^n$ be the variety
 \begin{align*}V=V_{\mathbf{F}}=\big\{\mathbf{x}\in \A^n:\ \mathbf{F}(\mathbf{x})=\mathbf{0}\big\}.\end{align*}
 Let $\P$ denote the set of prime numbers. We study $V(\P)$, which is the set of points in $V$ with prime coordinates, that is
 \begin{align*}V(\P)=\big\{\mathbf{x}\in \P^n:\ \mathbf{F}(\mathbf{x})=\mathbf{0}\big\}.\end{align*}
 We introduce the Jacobian matrix
\begin{align*}
J_{\mathbf{F}}(\mathbf{x})=\Big(\frac{\partial F_i}{\partial x_j}(\mathbf{x})\Big)_{\substack{1\le i\le R\\ 1\le j\le n}}.\end{align*}
Following Browning and Heath-Brown \cite{BHb}, we say $F_1,\ldots,F_R$ is a non-singular system if $\rank(J_{\mathbf{F}}(\mathbf{x}))=R$ for every nonzero $\mathbf{x}\in V_\mathbf{F}$.

Our main result is as follows.
\begin{theorem}\label{theorem1}Let $F_1,\ldots,F_R\in \Z[x_1,\ldots,x_n]$ be homogeneous polynomials of degree $d\ge 2$.
Suppose that $F_1,\ldots,F_R$ is a non-singular system and that
$$n\ge 4^{d+2}d^2R^5.$$
Then $V(\P)$ is Zariski dense in $V$ provided that the following two conditions hold:

(i) $\mathbf{F}(\mathbf{x})=\mathbf{0}$ has a non-singular solution over the $p$-adic units $\U_p$ for all prime numbers $p$, and

(ii) $\mathbf{F}(\mathbf{x})=\mathbf{0}$ has a non-singular solution in the open cube $(0,1)^n$.
\end{theorem}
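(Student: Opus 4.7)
The plan is to execute the Hardy--Littlewood circle method with the von Mangoldt function as a prime-detecting weight. Fix a non-singular real zero $\boldsymbol{\xi}\in(0,1)^n$ supplied by condition (ii), choose a non-negative smooth cutoff $\omega$ supported in a small neighbourhood of $\boldsymbol{\xi}$, and form
$$S(\boldsymbol{\alpha})=\sum_{\mathbf{x}\in\Z^n}\Lambda(x_1)\cdots\Lambda(x_n)\,\omega(\mathbf{x}/P)\,e(\boldsymbol{\alpha}\cdot\mathbf{F}(\mathbf{x})),$$
so that the weighted count of prime solutions with coordinates near $P\boldsymbol{\xi}$ is $\int_{\T^R}S(\boldsymbol{\alpha})\,d\boldsymbol{\alpha}$. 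The goal is to prove this integral is $\gg P^{n-dR}(\log P)^{-n}$ as $P\to\infty$, and then to promote one lower bound of this form into Zariski density by varying $\boldsymbol{\xi}$.

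Dissect $\T^R$ into standard major arcs $\mathfrak{M}$ and minor arcs $\mathfrak{m}$. On $\mathfrak{M}$ a routine factorisation splits $S(\boldsymbol{\alpha})$ as a complete arithmetic sum times a smooth integral, producing the anticipated asymptotic $\mathfrak{S}\mathfrak{J}\,P^{n-dR}(\log P)^{-n}\bigl(1+o(1)\bigr)$. Condition (i), together with Hensel lifting of non-singular local zeros and the non-singularity of $\mathbf{F}$, delivers $\mathfrak{S}\gg 1$; condition (ii) and the location of $\omega$ near $\boldsymbol{\xi}$ give $\mathfrak{J}>0$.

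The real obstacle is the minor-arc bound. I would decompose $\Lambda$ by Vaughan's or Heath-Brown's identity into Type I and Type II bilinear pieces. In the Type I pieces the smooth variable can be summed first and the result is an exponential sum over integers to which the Birch--Davenport Weyl-differencing machinery applies directly: after $d-1$ successive shifts of $\mathbf{F}$ one obtains a linear exponential sum controlled by the number of integer vectors at which the $R$ linear forms $h_1(\partial F/\partial x_j)+\cdots$ in the iterated differences are simultaneously close to integers, and the non-singularity of $\mathbf{F}$ (equivalently a bound on the dimension of the dual variety) keeps that count small. For Type II sums I would first apply Cauchy--Schwarz in one of the bilinear variables to produce an exponential sum in the differences, then run the same Birch differencing in the remaining $n-1$ variables with the bilinear variable frozen. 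Tracking all $d-1$ differencing steps against both ends of the bilinear decomposition, and accounting for the factor $R$ introduced by handling $R$ simultaneous forms, is what forces the hypothesis $n\ge 4^{d+2}d^2R^5$; the hardest point will be extracting enough saving after each differencing to absorb the $(\log P)^{O(1)}$ losses inherent to Vaughan's identity while keeping the codimension of the dual variety under control.

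Finally, to pass from the existence of many prime solutions in a shrinking neighbourhood of $P\boldsymbol{\xi}$ to Zariski density of $V(\P)$ in $V$, I would argue as follows. Given any proper Zariski closed $W\subsetneq V$, the set $V_{\rm ns}(\R)\setminus W(\R)$ of non-singular real points of $V$ outside $W$ is still non-empty and open in $V(\R)$, and after a permutation and sign change of coordinates we may place one such point $\boldsymbol{\xi}$ in $(0,1)^n$. Applying the circle-method lower bound with $\omega$ supported in a small enough neighbourhood of $\boldsymbol{\xi}$ to avoid $W$ then produces, for all large $P$, prime solutions in $V(\P)\setminus W$. Since this holds for every proper $W$, $V(\P)$ is Zariski dense in $V$.
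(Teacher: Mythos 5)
Your high-level frame — circle method, major-arc asymptotic with local densities, and the passage from a positive-density bound near a non-singular real point $\boldsymbol{\xi}$ to Zariski density — matches the paper, and the last step in particular mirrors the deduction the paper makes via the argument of Liu--Sarnak. (One caveat on the local factor: condition (i) asks for a non-singular zero over the units $\mathbb{U}_p$, not merely $\Z_p$, because the Gauss sums are taken over residues coprime to $q$; Hensel from a non-singular $\Z_p$-point would not suffice.)

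The genuine gap is in the minor-arc plan. You propose to expand each $\Lambda$ by Vaughan's identity into Type I/II bilinear pieces and then \emph{``apply the Birch--Davenport Weyl-differencing machinery directly,''} freezing the bilinear variable and differencing in the ``remaining $n-1$ variables.'' But those remaining variables still carry $\Lambda$ weights. Birch's differencing requires $d-1$ successive applications of Cauchy--Schwarz in essentially every variable to reduce $\mathbf{F}$ to a multilinear form, and each application destroys the weight in the variable it hits; with $n-1$ prime-restricted variables there is no way to run the full $(d-1)$-step differencing and retain control of the weights. This is exactly the obstruction the paper flags: ``there is a considerable difficulty to apply the difference argument to multiple exponential summations, when the variables are restricted to special sequences.'' A Vaughan decomposition of a single $\Lambda$ does not remove that obstruction, and decomposing all $n$ of them produces an unmanageable number of Type I/II combinations with no clean bilinear structure.

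What the paper actually does is different and is the real content of the theorem. It only applies Vaughan's identity (and the resulting Type I/II estimates, Lemmas 6.7--6.9) to a tiny block of $t=dR$ variables $\mathbf{w}$, chosen by a geometric argument (Lemma 8.3) so that the top-degree part $\mathbf{H}(\mathbf{w})$ has rank $R$; this produces a pointwise saving $\sup_{\boldsymbol{\alpha}\in\mathfrak{m}}|\mathcal{E}_{\mathbf{y},\mathbf{z}}(\boldsymbol{\alpha})|\ll P^t(\log P)^{7t}Q^{-\omega}$. The bulk of the variables $(\mathbf{y},\mathbf{z})$ are never subjected to Weyl differencing with their $\Lambda$-weights attached. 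Instead one works with second moments: after Cauchy--Schwarz in $\mathbf{y}$ and then in $\mathbf{z}$ (Proposition 5.1), one lands on quantities of the form
\begin{align*}
\sum_{\mathbf{x}}\Big|\int_{\mathfrak{n}}e\big(\boldsymbol{\alpha}\cdot\mathbf{g}(\mathbf{x})\big)\,\mathcal{E}(\boldsymbol{\alpha};\mathbf{x})\,T(\boldsymbol{\alpha};\mathbf{x})\,d\boldsymbol{\alpha}\Big|^{2},
\end{align*}
and after opening the square and interchanging sum and integral the $\mathbf{x}$-sum becomes a \emph{weight-free} Birch-type exponential sum $\sum_{\mathbf{x}}e\big((\boldsymbol{\alpha}_1-\boldsymbol{\alpha}_2)\cdot\mathbf{g}(\mathbf{x})+p(\mathbf{x})\big)$, to which the classical differencing (Lemmas 3.3--3.6) applies because the difference variable $\boldsymbol{\alpha}_1-\boldsymbol{\alpha}_2$ ranges over the torus and the prime weights have been shed into the diagonal. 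That shift — doing Birch's differencing \emph{after} an $L^2$ step rather than on the original weighted sum — is the key idea you would need and did not supply. Without it the Type II estimate does not close, and the constant $n\ge 4^{d+2}d^2R^5$ (arising from balancing $\kappa_1,\kappa_2$ against the $Q^{-\omega_{d,R}}$ saving and the codimension losses of Lemma 8.2) has no derivation.
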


Theorem \ref{theorem1} discusses the solutions to \eqref{equationF=0} from the point view
of Bourgain, Gamburd and Sarnak \cite{BGS}. In particular, if the above local conditions in Theorem \ref{theorem1} are satisfied, then there are infinitely many solutions to \eqref{equationF=0} in prime variables.
Theorem \ref{theorem1} is new even in the case $R=1$.
Cook and Magyar \cite{CM} proved that for each $R$ and $d$, there exists a number $\chi(R,d)$ such that if $n-\dim V_{\mathbf{F}}^\ast\ge \chi(R,d)$ then the above smooth Hasse principle holds, where $V_{\mathbf{F}}^\ast$ is the singular locus (in the sense of Birch)
\begin{align*}V_{\mathbf{F}}^\ast=\{\mathbf{x}\in \A^n:\ \rank J_{\mathbf{F}}(\mathbf{x})<R\}.\end{align*}
For comparison, we mention some remarks in \cite{CM}. Cook and Magyar \cite{CM} pointed out that the quantitative aspects of $\chi(R,d)$ in their proof are in general extremely poor, and in particular, the numbers $\chi(1,d)$ already exhibit tower type behavior in the degree $d$. They also pointed out that for a system of quadratic forms, $\chi(R,2)\le 2^{2^{cR^2}}$ with some absolute constant $c>0$.

The improvement on the number of variables in Birch's type theorem has been stimulating numerous works. The classical works of Heath-Brown \cite{HB1983,HB2007} give impressive improvement on Birch's theorem when $d=3$ and $R=1$. Browning and Prendiville \cite{BP} obtained improvement for all degree $d\ge 4$ in the case $R=1$. For a cubic form over number fields, Skinner's result was improved by Browning and Vishe \cite{BV2014}.  Very recently, Rydin Myerson \cite{Myerson} obtained considerable improvement for systems of quadratic forms.

The proof of Cook and Magyar \cite{CM} is based on the regularization process developed by Schmidt \cite{Schmidt}. Our method in the proof of Theorem \ref{theorem1} is quite different. We shall develop a new approach to Theorem 1.1 based on the difference argument for exponential summations over special sequences of natural numbers, such as primes. As is well-known, there is a considerable difficulty to apply the difference argument to multiple exponential summations, when the variables are restricted to special sequences. We overcome this difficulty by investigating the mean value result.

We establish our theorems by applying the Hardy-Littlewood method. The general philosophy underlying applications of the Hardy-Littlewood method suggests one may expect to establish an asymptotic formula for the number of prime solutions to \eqref{equationF=0} in a finite box. Indeed, Theorem \ref{theorem1} was obtained from such a quantitative result. In order to state the asymptotic formula precisely, we introduce some notations.

Let $\mathfrak{B}$ be a fixed box in $n$-dimensional space determined by
\begin{align*}b_j'< x_j\le b_{j}'',\end{align*}
where $0<b_j'<b_{j}''<1$ are fixed constants for $1\le j\le n$. Suppose that $P$ is sufficiently large. We use $P\mathfrak{B}$ to denote the set of all vectors $\mathbf{x}$ with $P^{-1}\mathbf{x}\in \mathfrak{B}$. We define
\begin{align}\label{defineNFP}N_{\mathbf{F}}(P)=\sum_{\substack{\mathbf{x}\in P\mathfrak{B}
\\ \mathbf{F}(\mathbf{x})=\mathbf{0}}}\Lambda(x_1)\cdots \Lambda(x_n),\end{align}
where $\Lambda(\cdot)$ denotes the Von Mangoldt function.

To explain the asymptotic behaviour, we need to introduce the singular series $\mathfrak{S}_\mathbf{F}^\ast$ and the singular integral $\mathfrak{I}_\mathbf{F}$, which are standard in the application of the Hardy-Littlewood method. The precise definitions will be given in \eqref{definesingularseries} and \eqref{definesingularintegral}, respectively.

 Now we introduce our quantitative result for $N_\mathbf{F}(P)$.

\begin{theorem}\label{theorem2}Let $F_1,\ldots,F_R\in \Z[x_1,\ldots,x_n]$ be homogeneous polynomials of degree $d\ge 2$. Let $N_\mathbf{F}(P)$ be defined in \eqref{defineNFP}.
Suppose that $F_1,\ldots,F_R$ is a non-singular system and that
$$n\ge 4^{d+2}d^2R^5.$$
For any constant $A>0$, we have
\begin{align*}N_\mathbf{F}(P)=\mathfrak{S}_\mathbf{F}^\ast \mathfrak{I}_\mathbf{F} P^{n-Rd}+O\big(P^{n-Rd}(\log P)^{-A}\big),\end{align*}
where the singular series $\mathfrak{S}_\mathbf{F}^\ast$ and the singular integral $\mathfrak{I}_\mathbf{F}$ are given in \eqref{definesingularseries} and \eqref{definesingularintegral}, respectively.
\end{theorem}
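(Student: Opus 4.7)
The plan is to apply the Hardy--Littlewood method in $R$ dimensions. I would write
$$N_\mathbf{F}(P)=\int_{\T^R}S(\boldsymbol{\alpha})\,d\boldsymbol{\alpha},\qquad S(\boldsymbol{\alpha})=\sum_{\mathbf{x}\in P\mathfrak{B}}\Lambda(x_1)\cdots\Lambda(x_n)\,e(\boldsymbol{\alpha}\cdot\mathbf{F}(\mathbf{x})),$$
and partition $\T^R$ into major arcs $\mathfrak{M}$ consisting of $\boldsymbol{\alpha}$ lying within $QP^{-d}$ of some $\mathbf{a}/q$ with $1\le q\le Q$, for $Q=(\log P)^B$ with $B=B(A)$ to be chosen, together with minor arcs $\mathfrak{m}=\T^R\setminus\mathfrak{M}$. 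The task then splits into extracting $\mathfrak{S}_\mathbf{F}^\ast\mathfrak{I}_\mathbf{F} P^{n-Rd}$ from $\int_\mathfrak{M}S(\boldsymbol{\alpha})\,d\boldsymbol{\alpha}$ and proving $\int_\mathfrak{m}|S(\boldsymbol{\alpha})|\,d\boldsymbol{\alpha}\ll P^{n-Rd}(\log P)^{-A}$.

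For the major arcs I would follow the standard route: invoke the Siegel--Walfisz theorem to replace each one-dimensional $\Lambda$-weighted sum by its expected main term on each arithmetic progression modulo $q$, perform a multivariable Taylor expansion of $\mathbf{F}(\mathbf{x})$ around $q^{-1}\mathbf{a}$, and then decouple the sum over residue classes modulo $q$ (which, on completion, yields $\mathfrak{S}_\mathbf{F}^\ast$) from a smooth integral over $\mathfrak{B}$ (which yields $\mathfrak{I}_\mathbf{F}$). With $B$ chosen sufficiently large in terms of $A$ and $d$, the completion of both the arithmetic sum and the integral introduces only an acceptable logarithmic error, and the non-singularity of $\mathbf{F}$ guarantees absolute convergence of the completed singular series and singular integral.

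The main difficulty is the minor arc bound. The classical Birch strategy of Weyl differencing $S(\boldsymbol{\alpha})$ a total of $d-1$ times is unavailable here, since successive differences of prime variables are not prime. My approach would be to apply a Vaughan or Heath-Brown identity to $\Lambda(x_j)$ one coordinate at a time, splitting $S(\boldsymbol{\alpha})$ into Type I and Type II bilinear pieces. In a Type I contribution the distinguished variable ranges over an integer interval, so Birch-type differencing in the other $n-1$ variables recovers the usual Weyl cancellation. In a Type II contribution one applies Cauchy--Schwarz in a long variable to remove its prime constraint at the cost of an additive shift, leaving a mollified exponential sum to control. The key new ingredient, and where the hypothesis $n\ge 4^{d+2}d^2R^5$ enters, is a mean-value estimate of the shape
$$\int_{\T^R}|T(\boldsymbol{\alpha})|^{2}\,d\boldsymbol{\alpha}\ll P^{2n-Rd}(\log P)^{-C}$$
for the auxiliary exponential sum $T$ attached to the Type II decomposition, together with a complementary Weyl-type pointwise bound outside the major arcs. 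Establishing this mean-value estimate---in a form strong enough to absorb the losses incurred by Cauchy--Schwarz yet flexible enough to iterate across all coordinates of $\mathbf{x}$---will be the principal technical obstacle; the large lower bound on $n$ is needed precisely to allow enough differencing after the Cauchy--Schwarz step to overcome the singular locus dimension bound and save the required $(\log P)^{-A}$. Combining this minor arc bound with the major arc main term then yields the asymptotic formula claimed in Theorem \ref{theorem2}.
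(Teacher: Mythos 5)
Your major-arc plan matches the paper's Section 10 in all essentials. The gap is in the minor-arc strategy, and it is a conceptual one rather than a matter of detail.

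You propose to apply Vaughan's identity ``one coordinate at a time'' to split $S(\boldsymbol\alpha)$ into Type I and Type II pieces, estimate the Type I pieces by Birch differencing in the remaining $n-1$ variables, and handle Type II by Cauchy--Schwarz together with an $L^2$ mean value $\int_{\T^R}|T(\boldsymbol\alpha)|^2\,d\boldsymbol\alpha\ll P^{2n-Rd}(\log P)^{-C}$. This plan does not go through. In a Type I piece, after extracting an unrestricted coordinate $x_1$, the other $n-1$ variables are still carrying $\Lambda$-weights, so Birch's Weyl differencing over those variables is exactly the thing that is blocked: you cannot difference a variable restricted to primes. In a Type II piece, Cauchy--Schwarz in the long variable removes that single prime weight, but again leaves the remaining $n-1$ primed variables untouched, and the additive shift you produce lives in $\mathbf{x}$-space, not in $\boldsymbol\alpha$-space, so the prime-weight obstruction recurs one variable at a time. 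Moreover, the mean value $\int_{\T^R}|T|^2\ll P^{2n-Rd}(\log P)^{-C}$ is either circular (if $T$ is essentially $S$, that estimate is equivalent to the theorem being proved) or has the wrong exponent (if $T$ is a shorter bilinear piece). The paper also bounds only $\int_{\mathfrak m}S(\boldsymbol\alpha)\,d\boldsymbol\alpha$, not $\int_{\mathfrak m}|S(\boldsymbol\alpha)|\,d\boldsymbol\alpha$; aiming for the latter commits you to a pointwise minor-arc bound for $S$ over primes, precisely the difficulty the authors say they are avoiding.

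The paper's actual mechanism is different and is the main novelty. After splitting $\mathbf{x}=(\mathbf{y},\mathbf{z},\mathbf{w})$ with $|\mathbf{w}|=dR$ and writing $F_i=f_i(\mathbf{y})+g_i(\mathbf{y},\mathbf{z})+h_i(\mathbf{y},\mathbf{z},\mathbf{w})$, one applies Cauchy--Schwarz \emph{to the integrated quantity} $\int_{\mathfrak n}S\,d\boldsymbol\alpha$, first in $\mathbf{y}$ and then in $\mathbf{z}$. Opening each square produces a double integral $\int_{\mathfrak n}\int_{\mathfrak n}$, and the resulting inner sum over the chosen block of variables carries the phase $e((\boldsymbol\alpha_1-\boldsymbol\alpha_2)\cdot\mathbf{f}(\cdot))$ plus a polynomial of strictly lower degree in those variables coming from the $\Lambda$-weighted factors; this lower-degree piece is annihilated by Birch differencing (Lemma~\ref{lemmadif}), so the classical Weyl machinery applies even though the weights came from primes. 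Two iterations of this (Lemmas~\ref{lemmameanJ}--\ref{lemmameanI}, Proposition~\ref{prop}) leave only $\sup|\mathcal E_{\mathbf y,\mathbf z}(\boldsymbol\alpha)|$, an exponential sum over the $dR$-block $\mathbf{w}$ of prime variables. Only there is Vaughan's identity invoked, and only after a further linear-algebra reduction to a single-variable prime exponential sum (Lemmas~\ref{lemma61}--\ref{lemmaboundE2}). The lower bound $n\ge 4^{d+2}d^2R^5$ is then tuned through Lemma~\ref{lemmakey} so that the singular-locus codimensions of the $\mathbf{f}$- and $\mathbf{g}$-blocks are large enough for the two Cauchy--Schwarz steps. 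Your proposal lacks this ``difference in $\boldsymbol\alpha$ rather than in $\mathbf{x}$'' idea, which is what lets one dodge the prime constraint on all but $dR$ variables; without it the plan stalls at the first Weyl-differencing step.
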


Let
\begin{align*}N_{\mathbf{F}}^\ast(P)=\sum_{\substack{\mathbf{p}\in P\mathfrak{B}
\\ \mathbf{F}(\mathbf{p})=\mathbf{0}}}(\log p_1)\cdots (\log p_n),\end{align*}
where the multiple summation is taken over $\mathbf{p}=(p_1,\ldots,p_n)$ in the box $P\mathfrak{B}$, and $p_1,\ldots,p_n$ are prime numbers. Then $N_\mathbf{F}^\ast(P)$ counts the weighted number of prime solutions to \eqref{equationF=0} in a finite box. Theorem \ref{theorem2} holds as well with $N_\mathbf{F}(P)$ replaced by $N_\mathbf{F}^\ast(P)$.

If $\mathbf{F}(\mathbf{x})=\mathbf{0}$ has a non-singular solution over $\U_p$ for all prime numbers $p$, then $\mathfrak{S}_\mathbf{F}^\ast>0$. Moreover, if $\mathbf{F}(\mathbf{x})=\mathbf{0}$ has a non-singular solution in $(0,1)^n$, then we can choose a suitable box $\mathfrak{B}$ such that $\mathfrak{I}_\mathbf{F}>0$. In particular, subject to smooth local conditions, Theorem \ref{theorem2} yields
\begin{align}\label{lowerNFP2}N_{\mathbf{F}}^\ast(P)\gg P^{n-Rd}.\end{align}
 Now Theorem \ref{theorem1} follows from  \eqref{lowerNFP2} by the standard argument (see the proof of Corollary 2.3 in \cite{LS}). Therefore, our task is to prove Theorem \ref{theorem2}.

 The method in the proof of Theorem \ref{theorem2} is based on the difference argument, and thus it works well to deal with a system of general polynomials of the same degree subject to some assumptions on the highest degree homogeneous parts.

\vskip3mm

We introduce some notations in Section 2 and we shall study some classical lemmas in Section 3. We prepare some technical mean value estimates in Sections 4-5. We study exponential sums over primes in Section 6. Then in Section 7 we deduce an acceptable minor arcs estimate subject to some technical conditions on $\mathbf{F}$. Section 8 is devoted to geometric considerations. As an application, we remove the technical conditions introduced in Section 7. We explain the singular series and the singular integral in Section 9, and complete the proof of Theorem \ref{theorem2} in Section 10.

\vskip3mm

\section{Notations}

As usual, we write $e(z)$ for $e^{2\pi iz}$. Throughout this paper, we assume
that $P$ is sufficiently large. We use $\ll$ and
$\gg$ to denote Vinogradov's well-known notations. The implied constant may depend on the polynomials $F_1,\ldots,F_R$. Denote by $\phi(q)$
Euler's totient function. For $\alpha \in \R$, we use $\|\alpha\|$ to denote $\min_{m\in \Z}|\alpha-m|$.

The letter $\varepsilon$ denotes a sufficiently small positive real number. Any statement in which $\varepsilon$ occurs holds for each fixed $\varepsilon > 0$, and any implied constant in such a statement is allowed to depend on $\varepsilon$.

For $\mathbf{x}=(x_1,\ldots,x_n)\in \Z^n$, we use $\mathfrak{A}(\mathbf{x})$ to indicate that
$\mathfrak{A}(x_j)$ holds for all $1\le j\le n$. The meaning will
be clear from the context. For example, we use $1\le \mathbf{x}\le X$
to denote $1\le x_j\le X$ for all $1\le j\le
n$.

The letters $F,G,H$ and $f,g,h$ are reserved for polynomials. A homogeneous polynomial of degree $d$ with coefficients integral and symmetric will be called a form of degree $d$.

For any fixed natural number $i\in \N$, we assume that $\{\lambda_i(x)\}_{x=1}^\infty$  is a real sequence satisfying
\begin{align*}|\lambda_i(x)|\le \log x \ (x\ge 1).\end{align*}
Then for a vector $\mathbf{x}=(x_1,\ldots,x_n)\in \N^n$, we use $\lambda(\mathbf{x})$ to denote the product
\begin{align*}\lambda(\mathbf{x})=\lambda_1(x_1)\cdots \lambda_n(x_n).\end{align*}

In order to apply the circle method, we introduce the exponential sum
\begin{align}\label{definegeneratingS}S_{\mathbf{F}}(\boldsymbol{\alpha})=\sum_{1\le \mathbf{x}\le P}\lambda(\mathbf{x})
e\big(\boldsymbol{\alpha}\cdot \mathbf{F}(\mathbf{x})\big).\end{align}
 We define the major arcs
\begin{align}\label{defineMQ}\mathfrak{M}(Q)=\bigcup_{1\le q\le
Q}\bigcup_{\substack{1\le a_1,\ldots,a_R\le q
 \\ (a_1,\ldots,a_R,q)=1}}\mathfrak{M}(q,\mathbf{a};Q),\end{align}
where
\begin{align*}\mathfrak{M}(q,\mathbf{a};Q)=\Big\{(\alpha_1,\ldots,\alpha_R)\in \R^R:\ \big|\alpha_i-\frac{a_i}{q}\big|\le \frac{Q}{qP^d} \ \textrm{ for all }\ 1\le i\le R\Big\}.\end{align*}
Then for $Q\le \frac{1}{4}P^{\frac{Rd}{R+1}}$, we define the minor arcs
\begin{align}\label{definemQ}\mathfrak{m}(Q)=[P^{-d/2},1+P^{-d/2}]^{R}\setminus\mathfrak{M}(Q).\end{align}
Note that when $Q\le \frac{1}{4}P^{\frac{d}{2}}$, the unions in \eqref{defineMQ} are pairwise disjoint.

\vskip3mm
\section{Classical lemmas from Birch's work}

This section is devoted to the review of classical lemmas in Birch's work \cite{Birch}. Essentially, all results in this section can be established  by the arguments in Sections 2-3 of Birch \cite{Birch}. There are several minor differences.
For example, in the application of the Hardy-Littlewood method, Birch defines the major arcs $\mathfrak{M}(Q)$ by choosing $Q$ to be
$P^{\theta}$ with a small number $\theta=\theta_{d,R,n}>0$. However, to study diophantine equations in prime variables, we need to work with the major arcs $\mathfrak{M}(X)$ by choosing $X=(\log P)^{A}$ with a sufficiently large number $A=A_{d,R,n}>0$. Moreover, we have to replace some of $P^\varepsilon$ terms in Birch's lemmas by a power of $\log P$. Although these differences do not produce any difficulties, we plan to rewrite these lemmas and include some proofs for completeness.

Suppose that $\mathfrak{f}=(f_1,\ldots,f_R)$ and $f_1(\mathbf{x}),\ldots,f_R(\mathbf{x})$ are forms of degree $d$ in $n$ variables, that is,
\begin{align}\label{coeff1}
f_i(\mathbf{x})=\sum_{1\le j_0,j_1,\ldots,j_{d-1}\le n}f_{j_0,j_1,\ldots,j_{d-1}}^{(i)}x_{j_0}x_{j_1}\cdots x_{j_{d-1}},\end{align}
where the coefficients $f_{j_0,j_1,\ldots,j_{d-1}}^{(i)}$ are integers and symmetric.

For $1\le j\le n$ and $d-1$ vectors $\mathbf{x}^{(1)},\ldots,\mathbf{x}^{(d-1)}$ of dimension $n$, we define
\begin{align*}\Psi_j^{(i)}(\mathbf{x}^{(1)},\ldots,\mathbf{x}^{(d-1)};\mathfrak{f})=d!\sum_{1\le j_1,\ldots,j_{d-1}\le n}f_{j,j_1,\ldots,j_{d-1}}^{(i)}\mathbf{x}^{(1)}_{j_1}\cdots \mathbf{x}^{(d-1)}_{j_{d-1}}.\end{align*}
Then for $\boldsymbol{\alpha}=(\alpha_1,\ldots,\alpha_R)\in \R^R$, we introduce
\begin{align*}\Phi_j(\boldsymbol{\alpha};\mathbf{x}^{(1)},\ldots,\mathbf{x}^{(d-1)};\mathfrak{f})=\sum_{i=1}^R\alpha_i\Psi_j^{(i)}(\mathbf{x}^{(1)},\ldots,\mathbf{x}^{(d-1)};\mathfrak{f})
\end{align*}
and
\begin{align*}\Gamma_{\mathfrak{f}}(\boldsymbol{\alpha}; P)=\sum_{\substack{\mathbf{x}^{(1)},\ldots,\mathbf{x}^{(d-1)}
\\ |\mathbf{x}^{(1)}|\le P,\ldots,|\mathbf{x}^{(d-1)}|\le P}}\prod_{j=1}^{n}\min\big(P,\ \big\| \Phi_j(\boldsymbol{\alpha};\mathbf{x}^{(1)},\ldots,\mathbf{x}^{(d-1)};\mathfrak{f})\big\|^{-1}\big).\end{align*}
In order to introduce the upper bound for $\Gamma_{\mathfrak{f}}(\boldsymbol{\alpha}; P)$, we define
\begin{align}\label{defineNXY}N(X,Y^{-1};\boldsymbol{\alpha})\end{align}
to be the number of $(d-1)$-tuples of integer points $\mathbf{x}^{(1)},\ldots,\mathbf{x}^{(d-1)}$ which satisfy
\begin{align}\label{boundd-1tuples}|\mathbf{x}^{(1)}|\le X,\ldots,|\mathbf{x}^{(d-1)}|\le X,\end{align}
and
\begin{align*}\big\|\Phi_j(\boldsymbol{\alpha};\mathbf{x}^{(1)},\ldots,\mathbf{x}^{(d-1)};\mathfrak{f})\big\|<Y^{-1} \textrm{ for all } 1\le j\le n.\end{align*}

\begin{lemma}\label{lemmaboundgammatoN}We have
\begin{align*}\Gamma_{\mathfrak{f}}(\boldsymbol{\alpha}; P)\ll P^n(\log^n P)N(P,P^{-1};\boldsymbol{\alpha}) .\end{align*}\end{lemma}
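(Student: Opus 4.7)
My plan follows the standard dyadic decomposition argument of Birch \cite{Birch}, Section 3, adapted so that $P^\varepsilon$ losses are replaced by powers of $\log P$.

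First, I would dyadically split each factor $\min(P,\|\Phi_j\|^{-1})$. For each $j$ and each $(d-1)$-tuple $(\mathbf{x}^{(1)},\ldots,\mathbf{x}^{(d-1)})$ one may choose a dyadic parameter $Y_j\in\{1,2,\ldots,2^{\lceil\log_2 P\rceil}\}$ such that $\min(P,\|\Phi_j\|^{-1})\ll Y_j$ and $\|\Phi_j\|<2/Y_j$. Taking the product and summing over the $O((\log P)^n)$ relevant tuples $\mathbf{Y}=(Y_1,\ldots,Y_n)$ gives
\[
\Gamma_{\mathfrak{f}}(\boldsymbol{\alpha};P)\ll \sum_{\mathbf{Y}}\Bigl(\prod_{j=1}^{n}Y_j\Bigr)\,T(\mathbf{Y}),
\]
where $T(\mathbf{Y})=\#\bigl\{(\mathbf{x}^{(1)},\ldots,\mathbf{x}^{(d-1)}):|\mathbf{x}^{(i)}|\le P,\ \|\Phi_j\|<2/Y_j\ \forall j\bigr\}$.

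Second, I would show $\prod_jY_j\cdot T(\mathbf{Y})\ll P^n N(P,P^{-1};\boldsymbol{\alpha})$ uniformly in $\mathbf{Y}$. Fixing $\mathbf{x}^{(2)},\ldots,\mathbf{x}^{(d-1)}$, the quantities $\Phi_j(\boldsymbol{\alpha};\cdot,\mathbf{x}^{(2)},\ldots,\mathbf{x}^{(d-1)};\mathfrak{f})$ are $n$ real linear forms in $\mathbf{x}^{(1)}$, thanks to the symmetric multilinear structure inherited from the $f_i$. Davenport's shrinking lemma for $n$ such linear forms in $n$ integer variables then yields
\[
\Bigl(\prod_{j=1}^{n}Y_j\Bigr)\#\bigl\{\mathbf{x}^{(1)}:|\mathbf{x}^{(1)}|\le P,\,\|\Phi_j\|<2/Y_j\ \forall j\bigr\}\ll P^n\#\bigl\{\mathbf{x}^{(1)}:|\mathbf{x}^{(1)}|\le P,\,\|\Phi_j\|<P^{-1}\ \forall j\bigr\}.
\]
Summation over the outer vectors with $|\mathbf{x}^{(i)}|\le P$ gives the desired bound on $T(\mathbf{Y})$.

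Combining the two steps and absorbing the $(\log P)^n$ factor from the $\mathbf{Y}$-summation yields the claim. The main obstacle is the shrinking step: the parameters $Y_j$ vary with $j$, so a single uniform shrinking scale does not suffice. One needs the multi-parameter form of Davenport's lemma, which permits a different shrinking rate along each linear form $\Phi_j$; this is classical but must be invoked with care, and it is precisely where the multilinear symmetry of the coefficients $f^{(i)}_{j_0,\ldots,j_{d-1}}$ is used. The remainder is routine dyadic bookkeeping.
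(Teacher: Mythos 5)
Your outline follows exactly the route the paper intends when it cites the proof of Davenport's Lemma~3.2 --- dyadically split the $\min$-factors into $\ll(\log P)^n$ classes, then for each class convert the count $T(\mathbf{Y})$ to $N(P,P^{-1})$ at a cost $P^n/\prod_j Y_j$ --- and the displayed inequality in your second step is correct. Where you go astray is in the \emph{diagnosis} of that step: neither a ``multi-parameter shrinking lemma'' nor the symmetry of the coefficients $f^{(i)}_{j_0,\ldots,j_{d-1}}$ is needed for Lemma~\ref{lemmaboundgammatoN}. Since you are prepared to concede the full volume factor $\prod_j(P/Y_j)$, a bare covering-and-doubling argument, valid for an arbitrary system of linear forms, suffices. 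Fix $\mathbf{x}^{(2)},\ldots,\mathbf{x}^{(d-1)}$ and partition the $\mathbf{x}^{(1)}$ counted in $T(\mathbf{Y})$ into cells according to (a) which of $O(1)^n$ sub-cubes of side $P/2$ contains $\mathbf{x}^{(1)}$, and (b) for each $j$, which of $O(P/Y_j)$ intervals of length $(2P)^{-1}$ contains the mod-$1$ representative of $\Phi_j(\mathbf{x}^{(1)})$ lying in $(-2/Y_j,2/Y_j)$; note $P/Y_j\gg 1$ since $Y_j\le 2P$. If $\mathbf{x}^{(1)}$ and $\mathbf{x}^{(1)\prime}$ share a cell, then $|\mathbf{x}^{(1)}-\mathbf{x}^{(1)\prime}|<P$, and because $\Phi_j$ is linear in $\mathbf{x}^{(1)}$ we get $\|\Phi_j(\mathbf{x}^{(1)}-\mathbf{x}^{(1)\prime})\|<P^{-1}$; so fixing one element per nonempty cell and differencing shows each cell contributes at most as many points as are counted in $N(P,P^{-1})$ for the same outer variables. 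Summing over the $O(1)^n\prod_j(P/Y_j)$ cells and then over $\mathbf{x}^{(2)},\ldots,\mathbf{x}^{(d-1)}$ gives $\prod_jY_j\cdot T(\mathbf{Y})\ll P^n N(P,P^{-1})$, with no symmetry hypothesis. The symmetry of the coefficient tensor, together with Davenport's genuine geometry-of-numbers shrinking lemma (which loses no volume factor), is exactly what the companion Lemma~\ref{lemmaNtoN} needs; importing it into Lemma~\ref{lemmaboundgammatoN} is harmless but misattributes where the geometry actually bites.
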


Lemma \ref{lemmaboundgammatoN} can be established by using the argument in the proof of Lemma 3.2 of Davenport \cite{Davenport}.

As an application of Lemma 3.3 in \cite{Davenport} (see also Lemma 2.3 in \cite{Birch}), we have the following.
\begin{lemma}\label{lemmaNtoN}Suppose that $X\le P$. Then one has
\begin{align*}N(P,P^{-1};\boldsymbol{\alpha}) \ll P^{(d-1)n}X^{-(d-1)n} N(X,X^{d-1}P^{-d};\boldsymbol{\alpha}).\end{align*}\end{lemma}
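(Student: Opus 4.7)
The plan is to iterate the linear-forms shrinking principle of Lemma 3.3 of \cite{Davenport} a total of $d-1$ times, exploiting the multilinearity of $\Phi_j(\boldsymbol{\alpha};\mathbf{x}^{(1)},\ldots,\mathbf{x}^{(d-1)};\mathfrak{f})$ in the vectors $\mathbf{x}^{(1)},\ldots,\mathbf{x}^{(d-1)}$. The central observation is that freezing $d-2$ of these vectors turns each $\Phi_j$ into a real linear form in the surviving variable, so as a whole one obtains a system of $n$ real linear forms to which the linear-form shrinking principle can be applied in that one variable.

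First I would fix $\mathbf{x}^{(1)},\ldots,\mathbf{x}^{(d-2)}\in\Z^n$ with $|\mathbf{x}^{(i)}|\le P$ and view each $\Phi_j$ as a real linear form $L_j^{(d-1)}(\mathbf{x}^{(d-1)})$ whose coefficients depend on the frozen tuple and on $\boldsymbol{\alpha}$. Applying Lemma 3.3 of \cite{Davenport} to shrink the range of $\mathbf{x}^{(d-1)}$ from $|\cdot|\le P$ down to $|\cdot|\le X$ yields a factor $(P/X)^n$ while relaxing the approximation threshold from $P^{-1}$ to $P^{-1}(X/P)=XP^{-2}$. Summing over the remaining outer variables produces an intermediate count in which only the last variable has been compressed.

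I would then iterate. Having already shrunk $\mathbf{x}^{(d-1)}$, I freeze it together with $\mathbf{x}^{(1)},\ldots,\mathbf{x}^{(d-3)}$ and apply the same principle to the linear form $\mathbf{x}^{(d-2)}\mapsto\Phi_j$, gaining another factor $(P/X)^n$ and a further tightening of the threshold by $X/P$ to $X^{2}P^{-3}$. Running this process on each of the $d-1$ variables in turn compresses every $\mathbf{x}^{(i)}$ from $|\cdot|\le P$ to $|\cdot|\le X$, so the accumulated multiplicative cost is $(P/X)^{(d-1)n}$ and the final threshold is $P^{-1}(X/P)^{d-1}=X^{d-1}P^{-d}$, which is exactly the bound claimed.

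\textbf{Main obstacle.} The only real technical point is that at the $k$-th iteration the coefficients of the linear form $L_j^{(k)}$ depend on the remaining variables $\mathbf{x}^{(i)}$ with $i\ne k$, which are bounded but otherwise arbitrary. Since Lemma 3.3 of \cite{Davenport} holds uniformly over real linear forms this causes no difficulty, but one does have to verify that the threshold output by one step matches the input to the next, so that the $d-1$ factors of $X/P$ telescope neatly into $(X/P)^{d-1}$.
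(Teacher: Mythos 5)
Your proposal is correct and is essentially the argument the paper has in mind: iterating the one-step symmetric linear-form shrinking lemma of Davenport once per block $\mathbf{x}^{(k)}$, each step costing $(P/X)^n$ and scaling the threshold by $X/P$, which is precisely how Lemma 2.3 of Birch (the multilinear version the paper also cites) is obtained. One small slip in wording: you first say the threshold is "relaxed" from $P^{-1}$ to $XP^{-2}$, but since $X\le P$ this is a \emph{tightening} (as you correctly call it in the next step); the formulas and the telescoping to $X^{d-1}P^{-d}$ with total cost $(P/X)^{(d-1)n}$ are nonetheless right, and the needed symmetry of each induced linear form is guaranteed by the symmetry of the coefficients of $\mathfrak{f}$.
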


Let $\Upsilon(\mathbf{x}^{(1)},\ldots,\mathbf{x}^{(d-1)};\mathfrak{f})$ denote the $R$ by $n$ matrix
\begin{align*}\Big(\Psi_j^{(i)}(\mathbf{x}^{(1)},\ldots,\mathbf{x}^{(d-1)};\mathfrak{f})\Big)_{\substack{1\le i\le R
\\ 1\le j\le n}}.\end{align*}
We use $\mathscr{L}$ to denote the affine locus in $(d-1)n$-dimensional space consisting of points $(\mathbf{x}^{(1)},\ldots,\mathbf{x}^{(d-1)})$ satisfying
\begin{align*}\rank\Upsilon(\mathbf{x}^{(1)},\ldots,\mathbf{x}^{(d-1)};\mathfrak{f})< R ,\end{align*}
and we  use $\mathscr{L}(X)$ to denote the number of integer points $(\mathbf{x}^{(1)},\ldots,\mathbf{x}^{(d-1)})$ in $\mathscr{L}$, which satisfy \eqref{boundd-1tuples}.

\begin{lemma}\label{lemmaoneofthree}Let $\kappa>0$ be a real number. Suppose that $X\le P$. With the notations above, at least one of the three following holds: either (i)
$$\Gamma_{\mathfrak{f}}(\boldsymbol{\alpha}; P)\ll P^{dn}(\log P)^{n+1}X^{-\kappa},$$
or (ii) there exist $a_1,\ldots,a_R\in \Z$ and $q\in \N$ such that
$$(a_1,\ldots,a_R,q)=1, \, q\le X^{R(d-1)} \textrm{ and } |q\alpha_i-a_i|\le X^{R(d-1)}P^{-d}\, \textrm{ for }\, i=1,\ldots,R,$$
or (iii) there exists a real number $c=c_{\mathfrak{f}}>0$ such that
 $$\mathscr{L}(cX)\gg (cX)^{(d-1)n-\kappa}(\log P)^{\frac{1}{2}}.$$
 \end{lemma}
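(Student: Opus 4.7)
The plan is to chain Lemma \ref{lemmaboundgammatoN} and Lemma \ref{lemmaNtoN} to bound $\Gamma_{\mathfrak{f}}(\boldsymbol{\alpha};P)$ in terms of the counting function $N(X, X^{d-1}P^{-d};\boldsymbol{\alpha})$, and then to split into the three alternatives by a dichotomy on the tuples counted. Combining the two lemmas gives
$$\Gamma_{\mathfrak{f}}(\boldsymbol{\alpha}; P) \ll P^{dn}(\log P)^{n} X^{-(d-1)n} N\big(X, X^{d-1}P^{-d}; \boldsymbol{\alpha}\big),$$
so if alternative (i) fails then, after rearranging,
$$N\big(X, X^{d-1}P^{-d}; \boldsymbol{\alpha}\big) \gg X^{(d-1)n - \kappa} \log P.$$
It remains to analyse the integer tuples $(\mathbf{x}^{(1)},\ldots,\mathbf{x}^{(d-1)})$ counted on the right.

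I would partition these tuples according to whether $\rank \Upsilon(\mathbf{x}^{(1)},\ldots,\mathbf{x}^{(d-1)};\mathfrak{f}) < R$ or $= R$. If the singular tuples contribute at least half of the total count, then
$$\mathscr{L}(X) \gg X^{(d-1)n-\kappa} \log P \ge X^{(d-1)n-\kappa}(\log P)^{1/2},$$
which produces alternative (iii) (one may take $c_{\mathfrak{f}}=1$ here, or shrink $c_{\mathfrak{f}}$ later to match the other case).

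In the remaining case there exists at least one tuple $(\mathbf{x}^{(1)},\ldots,\mathbf{x}^{(d-1)})$ with $|\mathbf{x}^{(j)}| \le X$ for each $j$ such that $\Upsilon$ has full rank $R$ and $\|\Phi_j(\boldsymbol{\alpha};\ldots;\mathfrak{f})\| < X^{d-1}P^{-d}$ for every $j$. I would then pick columns $j_1,\ldots,j_R$ so that the $R\times R$ submatrix $M = (\Psi_{j_k}^{(i)})_{1\le k,i\le R}$ is nonsingular; its entries are integers bounded by $O_{\mathfrak{f}}(X^{d-1})$, and $q := |\det M|$ satisfies $q \ll X^{R(d-1)}$. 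For each $k$ there is an integer $b_k$ with $|\Phi_{j_k}(\boldsymbol{\alpha};\ldots;\mathfrak{f}) - b_k| < X^{d-1}P^{-d}$, so the identity $q\boldsymbol{\alpha} = \pm\mathrm{adj}(M)\mathbf{b} \pm \mathrm{adj}(M)\boldsymbol{\varepsilon}$ (signs chosen so that $q>0$), combined with the bound $\|\mathrm{adj}(M)\|_\infty \ll X^{(R-1)(d-1)}$, yields integers $a_i$ with $|q\alpha_i - a_i| \ll X^{R(d-1)} P^{-d}$. Dividing out the gcd $(a_1,\ldots,a_R,q)$ only strengthens both inequalities, and replacing $X$ by $c_{\mathfrak{f}} X$ for a sufficiently small $c_{\mathfrak{f}}>0$ absorbs the implicit constants into the clean bounds of alternative (ii).

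The main obstacle is bookkeeping rather than any new idea: one has to confirm that the logarithmic factors produced in Lemmas \ref{lemmaboundgammatoN}--\ref{lemmaNtoN} (which replace the $P^{\varepsilon}$ of Birch's original arguments) propagate consistently through the dichotomy, and that the implicit constants appearing in the $q$ and $|q\alpha_i-a_i|$ estimates can all be absorbed by a single universal $c_{\mathfrak{f}}>0$ acting simultaneously in (ii) and (iii). The underlying mechanism is exactly the classical Davenport--Birch geometry of numbers argument, so no substantively new input is required.
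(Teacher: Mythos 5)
Your proposal is correct and follows essentially the same route as the paper: chain Lemmas~\ref{lemmaboundgammatoN} and~\ref{lemmaNtoN} to get $N\big(X,X^{d-1}P^{-d};\boldsymbol{\alpha}\big)\gg X^{(d-1)n-\kappa}\log P$ when (i) fails, split according to whether the counted tuples lie on $\mathscr{L}$, and in the full-rank case solve the linear system via the adjugate to produce the rational approximation in (ii). The only cosmetic difference is that the paper applies Lemma~\ref{lemmaNtoN} with $cX$ in place of $X$ from the outset (so that the $q\le X^{R(d-1)}$ bound comes out clean without an after-the-fact rescaling) and detects a full-rank tuple by comparing the $\log P$ lower bound on $N$ with the $(\log P)^{1/2}$ upper bound that the failure of (iii) yields for $\mathscr{L}(cX)$, rather than your explicit half-count dichotomy.
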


\begin{proof}Suppose that (i) and (iii) fail to hold. We need to prove (ii). By Lemma \ref{lemmaboundgammatoN} and Lemma \ref{lemmaNtoN}, we have
\begin{align*}\Gamma_{\mathfrak{f}}(\boldsymbol{\alpha}; P)\ll P^{dn}(\log P)^{n}X^{-(d-1)n}N(cX,c^{d-1}X^{d-1}P^{-d};\boldsymbol{\alpha}),\end{align*}
where $c>0$ is a small number depending on $\mathfrak{f}$.
Therefore,
\begin{align*}N(cX,c^{d-1}X^{d-1}P^{-d};\boldsymbol{\alpha})\gg (cX)^{(d-1)n-\kappa} (\log P) .\end{align*}
Since (iii) does not hold, there exists an integer point
$$(\mathbf{x}^{(1)},\ldots,\mathbf{x}^{(d-1)})\in \Z^{(d-1)n}$$
counted by $N(cX,c^{d-1}X^{d-1}P^{-d};\boldsymbol{\alpha})$ such that
\begin{align*}\rank\Upsilon(\mathbf{x}^{(1)},\ldots,\mathbf{x}^{(d-1)};\mathfrak{f})=R.\end{align*}
Therefore, $\Upsilon(\mathbf{x}^{(1)},\ldots,\mathbf{x}^{(d-1)};\mathfrak{f})$ has a $R$ by $R$ invertible submatrix. Without loss of generality, we assume
$$\big(\Psi_j^{(i)}(\mathbf{x}^{(1)},\ldots,\mathbf{x}^{(d-1)};\mathfrak{f})\big)_{\substack{1\le i,j\le R}}$$
is invertible, and let $q'$ be the absolute value of its determinant. We have
\begin{align}\label{boundq}q'\le R! C_{\mathfrak{f}}^R (cX)^{R(d-1)},\end{align}
where $C_{\mathfrak{f}}$ denotes the maximum of the absolute values of the coefficients of $\mathfrak{f}$.

 According to the definition of $N(cX,c^{d-1}X^{d-1}P^{-d};\boldsymbol{\alpha})$ in \eqref{defineNXY}, the integer point $(\mathbf{x}^{(1)},\ldots,\mathbf{x}^{(d-1)})$ also satisfies
\begin{align*}|\mathbf{x}^{(1)}|\le cX,\ldots,|\mathbf{x}^{(d-1)}|\le cX\end{align*}
and
\begin{align*}\|\Phi_j(\boldsymbol{\alpha};\mathbf{x}^{(1)},\ldots,\mathbf{x}^{(d-1)};\mathfrak{f})\|<c^{d-1}X^{d-1}P^{-d}\ \textrm{ for all } 1\le j\le R.\end{align*}
In particular, for each $1\le j\le R$, there exist $A_j\in \Z$ and  $\delta_j\in \R$ such that
\begin{align*}\sum_{i=1}^R\alpha_i\Psi_j^{(i)}(\mathbf{x}^{(1)},\ldots,\mathbf{x}^{(d-1)};\mathfrak{f})=A_j+\delta_j \ \textrm{ and }\ |\delta_j|<c^{d-1}X^{d-1}P^{-d}.\end{align*}

Let $a_1',\ldots,a_R'\in \Z$ be the integer solution to
$$\sum_{i=1}^Ra_i'\Psi_j^{(i)}(\mathbf{x}^{(1)},\ldots,\mathbf{x}^{(d-1)};\mathfrak{f})=q'A_j\ \textrm{ for all } 1\le j\le R.$$
Then we have
$$\sum_{i=1}^R(q'\alpha_i-a_i')\Psi_j^{(i)}(\mathbf{x}^{(1)},\ldots,\mathbf{x}^{(d-1)};\mathfrak{f})=q'\delta_j\ \textrm{ for all } 1\le j\le R.$$
We conclude that
\begin{align}\label{bounddif}|q'\alpha_i-a_i'|\le R!C_{\mathfrak{f}}^{R-1}(cX)^{(R-1)(d-1)}c^{d-1}X^{d-1}P^{-d}\ \textrm{ for all } 1\le i\le R.\end{align}

By \eqref{boundq} and \eqref{bounddif}, if $c$ is sufficiently small, then $q'\le X^{R(d-1)}$ and
$$|q'\alpha_i-a_i'|\le X^{R(d-1)}P^{-d}\ \textrm{ for all } 1\le i\le R.$$
We establish (ii) by throwing away the common divisor of $a_1',\ldots,a_R',q'$.
\end{proof}

 \begin{lemma}\label{lemmagamma}Let $\kappa>0$ be a real number. Suppose that $X\le P$. Then we have either (i)
 $$\Gamma_{\mathfrak{f}}(\boldsymbol{\alpha}; P)\ll P^{dn}(\log P)^{n+1}X^{-\kappa},$$
or (ii) there exist $a_1,\ldots,a_R\in \Z$ and $q\in \N$ such that
$$(a_1,\ldots,a_R,q)=1, \ q\le X^{R(d-1)} \textrm{ and } |q\alpha_i-a_i|\le X^{R(d-1)}P^{-d}\ \textrm{ for }\ i=1,\ldots,R,$$
or (iii)
 $$n-\dim V_{\mathfrak{f}}^\ast< \kappa.$$\end{lemma}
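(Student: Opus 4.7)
The plan is to deduce Lemma \ref{lemmagamma} from Lemma \ref{lemmaoneofthree} by translating its counting alternative (iii) into a codimension condition on the singular locus $V_{\mathfrak{f}}^\ast$. Applying Lemma \ref{lemmaoneofthree} with the same $\kappa$ and $X$, alternatives (i) and (ii) there match (i) and (ii) here verbatim, so the only case requiring work is that
$$\mathscr{L}(cX)\gg (cX)^{(d-1)n-\kappa}(\log P)^{\frac{1}{2}}$$
for some $c=c_{\mathfrak{f}}>0$.

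Next I would appeal to the classical lattice-point estimate that for the affine variety $\mathscr{L}\subseteq \A^{(d-1)n}$, whose defining equations have coefficients depending only on $\mathfrak{f}$,
$$\mathscr{L}(Y)\ll Y^{\dim \mathscr{L}}\qquad (Y\ge 1),$$
with implied constant independent of $Y$. Setting $Y=cX$ and comparing with the lower bound above, the divergent factor $(\log P)^{1/2}$ forces, for all sufficiently large $P$, the strict inequality
$$\dim \mathscr{L}>(d-1)n-\kappa.$$
The key geometric input is then Birch's fibration inequality
$$\dim \mathscr{L}\le (d-2)n+\dim V_{\mathfrak{f}}^\ast,$$
which is essentially contained in Lemma 3.3 of \cite{Birch}. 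It is proved by projecting $\mathscr{L}$ fibrewise onto $V_{\mathfrak{f}}^\ast$ via the diagonal map and exploiting the multilinearity and symmetry of $\Psi_j^{(i)}(\mathbf{x}^{(1)},\ldots,\mathbf{x}^{(d-1)};\mathfrak{f})$: on the diagonal $\Upsilon$ is, up to a nonzero scalar, the Jacobian $J_{\mathfrak{f}}$, while a fibre above a fixed point has dimension at most $(d-2)n$. Combining the two displayed inequalities yields
$$n-\dim V_{\mathfrak{f}}^\ast \le (d-1)n-\dim \mathscr{L}<\kappa,$$
which is conclusion (iii).

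I do not anticipate any real obstacle: both ingredients are thoroughly classical, and the excerpt has already announced that Birch's lemmas will be reused with only cosmetic modifications. The one subtlety worth double-checking is that the constant in the bound $\mathscr{L}(Y)\ll Y^{\dim \mathscr{L}}$ is uniform in $Y$ and $P$; this is automatic because $\mathscr{L}$ is a fixed variety depending only on $\mathfrak{f}$, and the strictness in (iii) is a direct consequence of the unbounded factor $(\log P)^{1/2}$ on the right of the hypothesised lower bound.
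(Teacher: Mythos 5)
Your proof is correct and follows the same route as the paper: apply Lemma \ref{lemmaoneofthree} to reduce to the lower bound on $\mathscr{L}(cX)$, invoke Birch's Lemma 3.1 for the upper bound $\mathscr{L}(Y)\ll Y^{\dim\mathscr{L}}$, let the $(\log P)^{1/2}$ factor force $\dim\mathscr{L}>(d-1)n-\kappa$, and then use Birch's Lemma 3.3 to convert this into the codimension bound on $V_{\mathfrak{f}}^\ast$. The details you flag (uniformity of the implied constant, strictness from the divergent logarithm) are exactly the right points to check and are handled as you describe.
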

 \begin{proof}We prove (iii) by assuming that both (i) and (ii) fail to hold. In view of Lemma \ref{lemmaoneofthree}, we have
$$\mathscr{L}(cX)\gg (cX)^{(d-1)n-\kappa}(\log P)^{\frac12}.$$
By Lemma 3.1 in \cite{Birch},
 $$\mathscr{L}(cX)\ll X^{\dim \mathscr{L}}.$$
Thus we conclude from above
 $$\dim \mathscr{L}> (d-1)n-\kappa.$$
We apply Lemma 3.3 in \cite{Birch} to deduce
 $$\dim V_{\mathfrak{f}}^\ast \ge \dim \mathscr{L}-(d-2)n>n-\kappa,$$
 and therefore,
 $$n-\dim V_{\mathfrak{f}}^\ast < \kappa.$$
 This completes the proof.
 \end{proof}

  \begin{lemma}\label{lemmadif}Let $f_1(\mathbf{x}),\ldots,f_R(\mathbf{x})$ be forms of degree $d\ge 2$ in $n$ variables. Suppose that $G(\mathbf{x})$ is a polynomial of $\mathbf{x}$ with degree smaller than $d$ and the coefficients of $G$ are real numbers. Then we have
  \begin{align*}
  \sum_{1\le \mathbf{x}\le P}e\big(\sum_{i=1}^R\alpha_i f_i(\mathbf{x})+ G(\mathbf{x})\big)\ll P^{n-2^{1-d}dn}\Gamma_{\mathfrak{f}}(\boldsymbol{\alpha}; P)^{2^{1-d}},\end{align*}
  where the implied constant is independent of $G$.
  \end{lemma}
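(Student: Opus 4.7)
The plan is to iterate Weyl's differencing $d-1$ times. Set $\Phi(\mathbf{x})=\sum_{i=1}^R\alpha_if_i(\mathbf{x})+G(\mathbf{x})$ and let $T(\boldsymbol{\alpha})$ denote the exponential sum on the left-hand side. For a shift $\mathbf{h}\in\Z^n$, write $\Delta_{\mathbf{h}}\Phi(\mathbf{x})=\Phi(\mathbf{x}+\mathbf{h})-\Phi(\mathbf{x})$. Expanding $|T|^2$ and substituting $\mathbf{h}=\mathbf{x}-\mathbf{y}$ gives $|T|^2\le\sum_{|\mathbf{h}|<P}|\sum_{\mathbf{x}}e(\Delta_{\mathbf{h}}\Phi(\mathbf{x}))|$, the inner sum running over $\mathbf{x}$ in a sub-box of $[1,P]^n$ depending on $\mathbf{h}$. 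The aim is to prove inductively, for each $1\le k\le d-1$, that
\begin{equation*}
|T|^{2^k}\ll P^{(2^k-k-1)n}\sum_{|\mathbf{h}_1|,\ldots,|\mathbf{h}_k|<P}\Bigl|\sum_{\mathbf{x}}e\bigl(\Delta_{\mathbf{h}_1}\cdots\Delta_{\mathbf{h}_k}\Phi(\mathbf{x})\bigr)\Bigr|.
\end{equation*}
The inductive step combines Cauchy-Schwarz on the outer sum of $(2P)^{kn}$ tuples with the identity $|\sum_\mathbf{x}e(\psi(\mathbf{x}))|^2=\sum_{\mathbf{h}_{k+1},\mathbf{x}}e(\Delta_{\mathbf{h}_{k+1}}\psi(\mathbf{x}))$. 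The resulting recursion $c_{k+1}=2c_k+k$ with $c_1=0$ solves to $c_k=2^k-k-1$.

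The critical algebraic input at $k=d-1$ is the identity
\begin{equation*}
\Delta_{\mathbf{h}_1}\cdots\Delta_{\mathbf{h}_{d-1}}\Phi(\mathbf{x})=\sum_{j=1}^n x_j\,\Phi_j(\boldsymbol{\alpha};\mathbf{h}_1,\ldots,\mathbf{h}_{d-1};\mathfrak{f})+C(\mathbf{h}_1,\ldots,\mathbf{h}_{d-1}),
\end{equation*}
with $C$ independent of $\mathbf{x}$. Two facts deliver this. First, $G$ has degree strictly less than $d$, so after $d-1$ differencings it collapses to a constant in $\mathbf{x}$ and contributes only to $C$; this is precisely what makes the bound uniform in $G$, as claimed. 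Second, for each symmetric form $f_i$, polarisation identifies the resulting coefficient of $x_j$ with $\Psi_j^{(i)}(\mathbf{h}_1,\ldots,\mathbf{h}_{d-1};\mathfrak{f})$, by the definition of $\Psi_j^{(i)}$ following \eqref{coeff1}, the factor $d!$ arising from counting orderings of the differentiation slots.

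With the identity in hand, the innermost exponential factors across coordinates, and the standard one-dimensional bound $|\sum_{x_j\in I}e(x_j\Phi_j)|\le\min(P,\|\Phi_j\|^{-1})$, valid for any subinterval $I\subseteq[1,P]$, yields
\begin{equation*}
|T|^{2^{d-1}}\ll P^{(2^{d-1}-d)n}\,\Gamma_{\mathfrak{f}}(\boldsymbol{\alpha};P).
\end{equation*}
Extracting the $2^{d-1}$-th root gives the stated bound $|T|\ll P^{n-2^{1-d}dn}\,\Gamma_{\mathfrak{f}}^{2^{1-d}}$. The main bookkeeping obstacle is tracking the exponent $c_k$ correctly through each iteration of Cauchy-Schwarz: a naive accounting suggests the weaker exponent $2^{d-1}-1$, and only the careful observation that the outer sum at step $k$ contains $(2P)^{kn}$ tuples (not $(2P)^n$) produces the sharper exponent $2^{d-1}-d$ needed here.
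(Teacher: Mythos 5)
Your proposal is correct and is precisely the ``standard difference argument'' the paper invokes by pointing to Birch's Lemma~2.1: iterated Weyl differencing with Cauchy--Schwarz, using the polarisation identity to express the $(d-1)$-fold difference of $\sum_i\alpha_i f_i$ via the linear forms $\Phi_j$, and noting that $G$ collapses to a constant in $\mathbf{x}$ after $d-1$ differencings because $\deg G<d$. You have simply written out the details that the paper leaves to the reader.
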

 \begin{proof}This is essentially Lemma 2.1 of Birch \cite{Birch}, and it can be proved by the standard difference argument.
 Note that $G$ disappears in this process, since the degree of $G$ is smaller than $d$.\end{proof}

 Lemma \ref{lemmagamma} and Lemma \ref{lemmadif} together lead to the following result.
  \begin{lemma}\label{lemmaboundS}Let $\kappa>0$ be a real number. Let $f_1,\ldots,f_R$ and $G$ be as in Lemma \ref{lemmadif}. Suppose that $Q\le \frac{1}{4}P^{\frac{Rd}{R+1}}$. Then we have either (i)
  \begin{align*}
  \sum_{1\le \mathbf{x}\le P}e\big(\sum_{i=1}^R\alpha_i f_i(\mathbf{x})+ G(\mathbf{x})\big)\ll P^{n}(\log P)^{n}Q^{-\frac{\kappa}{2^{d-1}(d-1)R}},\end{align*}
or (ii) there exist $a_1,\ldots,a_R\in \Z$ and $q\in \N$ such that
$$(a_1,\ldots,a_R,q)=1, \ q\le Q\ \textrm{ and }\ |q\alpha_i-a_i|\le QP^{-d}\ \textrm{ for }\ i=1,\ldots,R,$$
or (iii)
 $$n-\dim V_{\mathfrak{f}}^\ast<\kappa.$$\end{lemma}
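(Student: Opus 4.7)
The plan is to deduce Lemma \ref{lemmaboundS} by combining the difference-argument bound of Lemma \ref{lemmadif} with the three-way dichotomy of Lemma \ref{lemmagamma}, choosing the auxiliary parameter $X$ in the latter so that its alternative (ii) matches alternative (ii) in the statement we want to prove. Concretely, I would set
\[
X = Q^{1/(R(d-1))}.
\]
The hypothesis $Q \le \tfrac14 P^{Rd/(R+1)}$ forces $X \le P$ whenever $d \ge 2$, because $Rd/(R+1) \le R(d-1)$ is equivalent to $1/(R+1) \le 1 - 1/d$, which certainly holds.

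With this choice in hand, apply Lemma \ref{lemmagamma} and split into three cases. If alternative (iii) of Lemma \ref{lemmagamma} holds, we immediately obtain alternative (iii) of the present lemma. If alternative (ii) of Lemma \ref{lemmagamma} holds, then there exist $a_1,\dots,a_R,q$ with $(a_1,\dots,a_R,q)=1$ satisfying $q \le X^{R(d-1)} = Q$ and $|q\alpha_i - a_i| \le X^{R(d-1)} P^{-d} = Q P^{-d}$, which is precisely alternative (ii) of the present lemma. The only remaining case is alternative (i) of Lemma \ref{lemmagamma}, namely
\[
\Gamma_{\mathfrak{f}}(\boldsymbol{\alpha}; P) \ll P^{dn} (\log P)^{n+1} X^{-\kappa}.
\]
Combining this with the difference-argument estimate of Lemma \ref{lemmadif} gives
\[
\sum_{1\le \mathbf{x}\le P} e\!\left(\sum_{i=1}^R \alpha_i f_i(\mathbf{x}) + G(\mathbf{x})\right)
\ll P^{n - 2^{1-d}dn}\bigl(P^{dn}(\log P)^{n+1} X^{-\kappa}\bigr)^{2^{1-d}}
= P^{n}(\log P)^{(n+1)2^{1-d}} X^{-\kappa/2^{d-1}}.
\]
Substituting $X = Q^{1/(R(d-1))}$ yields the exponent $\kappa/(2^{d-1}(d-1)R)$ on $Q$, and since $d \ge 2$ we have $(n+1)2^{1-d} \le n$, so the logarithmic factor is absorbed into $(\log P)^n$. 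This is exactly alternative (i).

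There is no real obstacle here beyond the bookkeeping: the content of the lemma is entirely contained in Lemmas \ref{lemmadif} and \ref{lemmagamma}, and the proof is the standard marriage of the two. The one subtlety worth checking is the admissibility of the choice $X = Q^{1/(R(d-1))}$, i.e.\ that it lies in the range $X \le P$ where Lemma \ref{lemmagamma} is applicable; as noted above, this follows from the hypothesis $Q \le \tfrac14 P^{Rd/(R+1)}$ together with $d \ge 2$. Everything else is a direct substitution.
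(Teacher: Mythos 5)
Your proof is correct and takes exactly the same route as the paper: the paper's own proof simply notes $Q^{1/(R(d-1))}\le P$ and cites Lemma~\ref{lemmagamma} with $X=Q^{1/(R(d-1))}$ together with Lemma~\ref{lemmadif}. You have merely spelled out the verification that $X\le P$ and the exponent bookkeeping that the paper leaves to the reader.
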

\begin{proof}Note that
$$Q^{\frac{1}{R(d-1)}}\le P.$$
We can obtain Lemma \ref{lemmaboundS} by applying Lemma \ref{lemmagamma} with $X=Q^{\frac{1}{R(d-1)}}$ and Lemma \ref{lemmadif}.\end{proof}

\vskip3mm

\section{Mean value theorems}

Suppose that $h_i(\mathbf{x},\mathbf{w})$ is a polynomial of $(\mathbf{x},\mathbf{w})$, and the degree of $h_i$ regard to $\mathbf{x}$ is smaller than $d$, that is,
\begin{align*}\deg_{\mathbf{x}}(h_i)<d.\end{align*}
Let $\mathbf{h}=(h_1,\ldots,h_R)$. For $\boldsymbol{\alpha}\in \R^{R}$ and $\mathbf{x}\in \Z^{k}$, we define
\begin{align*}\mathcal{E}(\boldsymbol{\alpha};\mathbf{x})=\sum_{1\le \mathbf{w}\le P}\lambda(\mathbf{w})e\big(\boldsymbol{\alpha}\cdot \mathbf{h}(\mathbf{x},\mathbf{w})\big),\end{align*}
where the summation is taken over $t$-dimensional vectors in $\N^{t}$. Similarly, we introduce
\begin{align*}T(\boldsymbol{\alpha};\mathbf{x})=\sum_{1\le \mathbf{u}\le P}\lambda(\mathbf{u})e\big(\boldsymbol{\alpha}\cdot \mathbf{H}(\mathbf{x},\mathbf{u})\big),\end{align*}
where the summation is taken over $l$-dimensional vectors in $\N^{l}$, $H_i(\mathbf{x},\mathbf{w})$ is a polynomial of $(\mathbf{x},\mathbf{u})$, and the degree of $H_i$ regard to $\mathbf{x}$ is smaller than $d$, that is,
\begin{align*}\deg_{\mathbf{x}}(H_i)<d.\end{align*}
Note that when $t=0$, we view $\mathcal{E}(\boldsymbol{\alpha};\mathbf{x})$ as a constant function $\mathcal{E}\equiv 1$.

We are interested in the second moment of the following
\begin{align*}\int_{\mathfrak{n}}e\big(\boldsymbol{\alpha}\cdot \mathbf{g}(\mathbf{x})\big)\mathcal{E}(\boldsymbol{\alpha};\mathbf{x})T(\boldsymbol{\alpha};\mathbf{x})d\boldsymbol{\alpha},\end{align*}
where $\mathbf{g}=(g_1,\ldots,g_R)$ are forms of degree $d$ and $\mathfrak{n}\subset [0,2]^R$ is a measurable set. In our applications, the set
$\mathfrak{n}$ will be the minor arcs.  Precisely, we are interested in
\begin{align}\label{defineJ}\mathcal{J}:=\mathcal{J}_{\mathfrak{n}} =\sum_{1\le \mathbf{x}\le P}\Big|\int_{\mathfrak{n}}e\big(\boldsymbol{\alpha} \cdot \mathbf{g}(\mathbf{x})\big)\mathcal{E}(\boldsymbol{\alpha};\mathbf{x})
T(\boldsymbol{\alpha};\mathbf{x})d\boldsymbol{\alpha}\Big|^2.\end{align}

In the following result, we obtain an upper bound of $\mathcal{J}$.
\begin{lemma}\label{lemmameanJ}Let $\mathcal{J}$ be defined in \eqref{defineJ}. Let $\kappa>0$ be a real number satisfying
$\kappa>2^{d-1}(d-1)R(R+1)$. Suppose that
$$k-\dim V_{\mathbf{g}}^\ast \ge \kappa.$$
Let $X\le \frac{1}{4}P^{\frac{Rd}{R+1}}$. Then we have
\begin{align*}\mathcal{J}\ll &\, P^{k+2t+2l-Rd}(\log P)^{k+2t+2l}X^{R+1-\frac{\kappa}{2^{d-1}(d-1)R}}\big|\mathfrak{n}\big|\notag
\\  &\ \ \ \ \  +X^{R+1}P^{-Rd}\sum_{1\le \mathbf{x}\le P}\int_{\mathfrak{n}}|\mathcal{E}(\boldsymbol{\alpha};\mathbf{x})T(\boldsymbol{\alpha};\mathbf{x})|^2d\boldsymbol{\alpha},\end{align*}
where the implied constant depends only on (the coefficients of) $\mathbf{g}$, and we use $|\mathfrak{n}|$ to denote the measure of $\mathfrak{n}$, that is,
$$|\mathfrak{n}|=\int_{\mathfrak{n}}d\alpha.$$
\end{lemma}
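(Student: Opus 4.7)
The plan is to open the square defining $\mathcal{J}$, swap the order of integration and summation, and apply Lemma \ref{lemmaboundS} to the resulting inner $\mathbf{x}$-sum, whose degree-$d$ homogeneous part is $(\boldsymbol{\alpha}-\boldsymbol{\beta})\cdot\mathbf{g}(\mathbf{x})$. Concretely, Fubini gives
\begin{align*}
\mathcal{J}=\int_{\mathfrak{n}}\int_{\mathfrak{n}}\sum_{1\le\mathbf{x}\le P}e\big((\boldsymbol{\alpha}-\boldsymbol{\beta})\cdot\mathbf{g}(\mathbf{x})\big)\mathcal{E}(\boldsymbol{\alpha};\mathbf{x})\overline{\mathcal{E}(\boldsymbol{\beta};\mathbf{x})}T(\boldsymbol{\alpha};\mathbf{x})\overline{T(\boldsymbol{\beta};\mathbf{x})}\,d\boldsymbol{\alpha}\,d\boldsymbol{\beta};
\end{align*}
the contributions to the phase coming from $\mathbf{h},\mathbf{H}$ have degree strictly less than $d$ in $\mathbf{x}$, so they can be absorbed into the auxiliary polynomial $G$ of Lemma \ref{lemmaboundS}.

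I then introduce the \emph{small-denominator differences} set
\begin{align*}
\mathfrak{N}=\bigcup_{q\le X}\bigcup_{\substack{\mathbf{a}\in\Z^R\\(a_1,\ldots,a_R,q)=1}}\big\{\boldsymbol{\gamma}\in\R^R:\,|q\gamma_i-a_i|\le XP^{-d}\ \text{for all}\ 1\le i\le R\big\},
\end{align*}
for which a routine volume count yields $|\mathfrak{N}|\ll X^{R+1}P^{-Rd}$, and decompose $\mathcal{J}=\mathcal{J}_1+\mathcal{J}_2$ according as $\boldsymbol{\alpha}-\boldsymbol{\beta}\in\mathfrak{N}$ or not.

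For $\mathcal{J}_1$, I would apply the elementary inequality $|ab|\le\tfrac12(|a|^2+|b|^2)$ to $\mathcal{E}T(\boldsymbol{\alpha};\mathbf{x})\overline{\mathcal{E}T(\boldsymbol{\beta};\mathbf{x})}$, exploit the $\boldsymbol{\alpha}\leftrightarrow\boldsymbol{\beta}$ symmetry, and use the trivial bound $|\{\boldsymbol{\beta}\in\mathfrak{n}:\boldsymbol{\alpha}-\boldsymbol{\beta}\in\mathfrak{N}\}|\le|\mathfrak{N}|$, concluding
\begin{align*}
|\mathcal{J}_1|\ll X^{R+1}P^{-Rd}\sum_{1\le\mathbf{x}\le P}\int_\mathfrak{n}|\mathcal{E}(\boldsymbol{\alpha};\mathbf{x})T(\boldsymbol{\alpha};\mathbf{x})|^2\,d\boldsymbol{\alpha},
\end{align*}
which supplies the second term of the claim. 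For $\mathcal{J}_2$, I would fully expand $\mathcal{E}\overline{\mathcal{E}}T\overline{T}$ into a quadruple sum over $(\mathbf{w}_1,\mathbf{w}_2,\mathbf{u}_1,\mathbf{u}_2)$ with $\lambda$-weights of absolute value at most $(\log P)^{2t+2l}$. For each fixed tuple, the resulting inner $\mathbf{x}$-sum fits the hypotheses of Lemma \ref{lemmaboundS} with $\boldsymbol{\alpha}\mapsto\boldsymbol{\alpha}-\boldsymbol{\beta}$, $Q=X$, $\mathfrak{f}=\mathbf{g}$: case (ii) is ruled out by $\boldsymbol{\alpha}-\boldsymbol{\beta}\notin\mathfrak{N}$, and case (iii) by the hypothesis $k-\dim V_{\mathbf{g}}^{\ast}\ge\kappa$, so case (i) gives the uniform saving
\begin{align*}
\Big|\sum_{1\le\mathbf{x}\le P}e\big((\boldsymbol{\alpha}-\boldsymbol{\beta})\cdot\mathbf{g}(\mathbf{x})+G(\mathbf{x})\big)\Big|\ll P^k(\log P)^k X^{-\kappa/(2^{d-1}(d-1)R)}.
\end{align*}

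The main technical obstacle lies in the subsequent $(\boldsymbol{\alpha},\boldsymbol{\beta})$-integration: a na\"ive application of this pointwise saving produces $|\mathfrak{n}|^2$ as the measure factor, whereas the first term of the claim involves $|\mathfrak{n}|$ paired with $X^{R+1}P^{-Rd}\asymp|\mathfrak{N}|$. To close this gap, I would change variables to $(\boldsymbol{\alpha},\boldsymbol{\gamma})=(\boldsymbol{\alpha},\boldsymbol{\alpha}-\boldsymbol{\beta})$, handle the $\boldsymbol{\alpha}$-integration over $\mathfrak{n}\cap(\mathfrak{n}+\boldsymbol{\gamma})$ to extract one factor of $|\mathfrak{n}|$, and arrange the $\boldsymbol{\gamma}$-integration outside $\mathfrak{N}$ together with the $X^{-\kappa/(2^{d-1}(d-1)R)}$ saving to reproduce the measure $X^{R+1}P^{-Rd}$ predicted by the claim. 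The hypothesis $\kappa>2^{d-1}(d-1)R(R+1)$ guarantees that $X^{R+1-\kappa/(2^{d-1}(d-1)R)}$ is genuinely small, so the first term is a meaningful improvement over the trivial Cauchy--Schwarz bound.
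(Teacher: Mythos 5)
Your setup is exactly the paper's: open the square, absorb the sub-degree-$d$ contributions from $\mathbf{h},\mathbf{H}$ into the auxiliary polynomial $G$ of Lemma~\ref{lemmaboundS}, and split the double integral according to whether the difference $\boldsymbol{\alpha}-\boldsymbol{\beta}$ lies in the major-arc set $\mathfrak{N}=\mathfrak{M}(X)$ or not. Your treatment of $\mathcal{J}_1$ (the difference-in-$\mathfrak{N}$ piece) via $|ab|\le\tfrac12(|a|^2+|b|^2)$ and the measure bound $|\mathfrak{N}|\ll X^{R+1}P^{-Rd}$ is correct and matches the paper's estimate of $\mathcal{J}(\mathfrak{M})$.

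However, you have correctly diagnosed a gap in the $\mathcal{J}_2$ piece that your proposed fix does not close. Applying the uniform saving $X^{-\kappa/(2^{d-1}(d-1)R)}$ over all of $\{\boldsymbol{\gamma}\notin\mathfrak{N}\}$ and extracting $|\mathfrak{n}|$ from the $\boldsymbol{\alpha}$-integral yields at best
\begin{align*}
|\mathcal{J}_2|\ll P^{k+2t+2l}(\log P)^{k+2t+2l}X^{-\kappa/(2^{d-1}(d-1)R)}|\mathfrak{n}|,
\end{align*}
since the $\boldsymbol{\gamma}$-range has measure $\gg 1$; there is no mechanism in your change of variables that produces the extra factor $X^{R+1}P^{-Rd}$ appearing in the claimed first term. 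The saving $X^{-\kappa/\cdots}$ is already accounted for in that term, so you cannot ``spend'' it again to create the $X^{R+1}P^{-Rd}$ measure. What is missing is a \emph{dyadic decomposition} of $\{\boldsymbol{\gamma}\notin\mathfrak{N}\}$ into annuli $\mathcal{M}_2(Q)=\mathfrak{M}(2Q)\setminus\mathfrak{M}(Q)$ for dyadic $Q$ with $X\le Q\ll P^{Rd/(R+1)}$, together with the outermost region $\mathfrak{m}(\tfrac14 P^{Rd/(R+1)})$. On the annulus at level $Q$, Lemma~\ref{lemmaboundS} gives the stronger pointwise saving $Q^{-\kappa/(2^{d-1}(d-1)R)}$, while the measure satisfies $\int_{\mathfrak{n}}\int_{\mathfrak{n}}K(\mathcal{M}_2(Q),\boldsymbol{\alpha}_1-\boldsymbol{\alpha}_2)\,d\boldsymbol{\alpha}_1\,d\boldsymbol{\alpha}_2\ll Q^{R+1}P^{-Rd}|\mathfrak{n}|$ because $\mathcal{M}_2(Q)\subseteq\mathfrak{M}(2Q)$; the product is $Q^{R+1-\kappa/(2^{d-1}(d-1)R)}P^{-Rd}|\mathfrak{n}|$. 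Since $\kappa>2^{d-1}(d-1)R(R+1)$ makes the exponent on $Q$ negative, the dyadic sum is dominated by $Q=X$, and the outermost piece (with measure bound $|\mathfrak{n}|\ll1$ but a saving at $Q\asymp P^{Rd/(R+1)}$) is likewise absorbed. That trade-off of increased saving against decreased annulus measure, summed dyadically, is the essential idea you need to complete the $\mathcal{J}_2$ estimate.
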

\begin{proof}On expanding the square, we obtain
\begin{align*}\mathcal{J}=  \sum_{\mathbf{x}}\int_{\mathfrak{n}}\int_{\mathfrak{n}}e\big((\boldsymbol{\alpha}_1-\boldsymbol{\alpha}_2)\cdot \mathbf{g}(\mathbf{x})\big)\mathcal{H}(\boldsymbol{\alpha}_1,\boldsymbol{\alpha}_2,\mathbf{x})
d\boldsymbol{\alpha}_1 d\boldsymbol{\alpha}_2,\end{align*}
where
\begin{align*}\mathcal{H}(\boldsymbol{\alpha}_1,\boldsymbol{\alpha}_2,\mathbf{x})=
\mathcal{E}(\boldsymbol{\alpha}_1;\mathbf{x})\mathcal{E}(-\boldsymbol{\alpha}_2;\mathbf{x})
T(\boldsymbol{\alpha}_1;\mathbf{x})T(-\boldsymbol{\alpha}_2;\mathbf{x})
.\end{align*}
On interchanging the order of the summation and integrations, we further obtain
\begin{align*}\mathcal{J}=  \int_{\mathfrak{n}}\int_{\mathfrak{n}}\mathcal{G}(\boldsymbol{\alpha}_1,\boldsymbol{\alpha}_2)d\boldsymbol{\alpha}_1 d\boldsymbol{\alpha}_2,\end{align*}
where
\begin{align}\label{forboundGtrivial}\mathcal{G}(\boldsymbol{\alpha}_1,\boldsymbol{\alpha}_2)= \sum_{\mathbf{x}}e\big((\boldsymbol{\alpha}_1-\boldsymbol{\alpha}_2) \cdot \mathbf{g}(\mathbf{x})\big)\mathcal{H}(\boldsymbol{\alpha}_1,\boldsymbol{\alpha}_2,\mathbf{x}).\end{align}

By the definitions of $\mathcal{E}(\boldsymbol{\alpha};\mathbf{x})$ and $T(\boldsymbol{\alpha};\mathbf{x})$, we have
\begin{align*}\mathcal{H}(\boldsymbol{\alpha}_1,\boldsymbol{\alpha}_2,\mathbf{x})=
\sum_{\mathbf{w}_1}\sum_{\mathbf{w}_2}\sum_{\mathbf{u}_1}\sum_{\mathbf{u}_2}
\lambda(\mathbf{w}_1)\lambda(\mathbf{w}_2)\lambda(\mathbf{u}_1)\lambda(\mathbf{u}_2)e\big(p(\mathbf{x})\big),\end{align*}
where $p(\mathbf{x}):=p_{\boldsymbol{\alpha}_1,\boldsymbol{\alpha}_2}(\mathbf{x},\mathbf{w}_1,
\mathbf{w}_2,\mathbf{u}_1,
\mathbf{u}_2)$ is
\begin{align*}\boldsymbol{\alpha}_1 \cdot \mathbf{h}(\mathbf{x},\mathbf{w}_1)
-\boldsymbol{\alpha}_2 \cdot \mathbf{h}(\mathbf{x},\mathbf{w}_2)+\boldsymbol{\alpha}_1 \cdot \mathbf{H}(\mathbf{x},\mathbf{u}_1)
-\boldsymbol{\alpha}_2 \cdot \mathbf{H}(\mathbf{x},\mathbf{u}_2)
.\end{align*}
On interchanging the order of summations, we deduce that
\begin{align*}\mathcal{G}(\boldsymbol{\alpha}_1,\boldsymbol{\alpha}_2)=\sum_{\mathbf{w}_1}\sum_{\mathbf{w}_2}\sum_{\mathbf{u}_1}\sum_{\mathbf{u}_2}
\lambda(\mathbf{w}_1)\lambda(\mathbf{w}_2)\lambda(\mathbf{u}_1)\lambda(\mathbf{u}_2)
U(\boldsymbol{\alpha}_1,\boldsymbol{\alpha}_2),\end{align*}
where $U(\boldsymbol{\alpha}_1,\boldsymbol{\alpha}_2):=U(\boldsymbol{\alpha}_1,\boldsymbol{\alpha}_2,\mathbf{w}_1,\mathbf{w}_2,\mathbf{u}_1,
\mathbf{u}_2)$ denotes
\begin{align*}U(\boldsymbol{\alpha}_1,\boldsymbol{\alpha}_2,\mathbf{w}_1,\mathbf{w}_2,\mathbf{u}_1,
\mathbf{u}_2)=\sum_{\mathbf{x}}e\big((\boldsymbol{\alpha}_1-\boldsymbol{\alpha}_2) \cdot \mathbf{g}(\mathbf{x})+p(\mathbf{x})\big).\end{align*}
Note that as a polynomial of $\mathbf{x}$, the degree of $p(\mathbf{x})$ is smaller than $d$.

For $\boldsymbol{\alpha}_1-\boldsymbol{\alpha}_2\in \mathfrak{m}(Q)$ with $Q\le \frac{1}{4}P^{\frac{Rd}{R+1}}$, we conclude from Lemma \ref{lemmaboundS} that
\begin{align*} U(\boldsymbol{\alpha}_1,\boldsymbol{\alpha}_2,\mathbf{x},\mathbf{w}_1,\mathbf{w}_2,\mathbf{u}_1,
\mathbf{u}_2)
\ll  P^{k}(\log P)^{k}Q^{-\frac{\kappa}{2^{d-1}(d-1)R}},\end{align*}
and therefore,
\begin{align}\label{boundGcal} \mathcal{G}(\boldsymbol{\alpha}_1,\boldsymbol{\alpha}_2)
\ll  P^{k+2t+2l}(\log P)^{k+2t+2l}Q^{-\frac{\kappa}{2^{d-1}(d-1)R}}.\end{align}

\medskip

For a measurable set $\mathcal{M}\subseteq [-1,1]^R$, we introduce
\begin{align*}K(\mathcal{M},\boldsymbol{\alpha})=\begin{cases}
1, \ \ \textrm{ if } \boldsymbol{\alpha}\in \mathcal{M},
\\0, \ \ \textrm{ otherwise}.\end{cases}\end{align*}
Then we define
\begin{align*}\mathcal{J}(\mathcal{M})=  \int_{\mathfrak{n}}\int_{\mathfrak{n}}\mathcal{G}(\boldsymbol{\alpha}_1,\boldsymbol{\alpha}_2)
K(\mathcal{M},\boldsymbol{\alpha}_1-\boldsymbol{\alpha}_2)d\boldsymbol{\alpha}_1 d\boldsymbol{\alpha}_2.\end{align*}

Now we define $\mathfrak{M}'(Q)$ similarly to \eqref{defineMQ} with $1\le a_1,\ldots,a_R\le q$ replaced by $-2q\le a_1,\ldots,a_R\le q$. Then we define $\mathfrak{m}'(Q)=[-1,1]^{R}\setminus \mathfrak{M}'(Q)$.
In fact, $\mathfrak{M}'(Q)$ and $\mathfrak{m}'(Q)$ coincide with $\mathfrak{M}(Q)$ and $\mathfrak{m}(Q)$ respectively in the sense of modulo one. And we shall just write $\mathfrak{M}(Q)$ and $\mathfrak{m}(Q)$ instead of $\mathfrak{M}'(Q)$ and $\mathfrak{m}'(Q)$ for simplicity.

Let
$$\mathcal{M}_1=\mathfrak{m}(\frac{1}{4}P^{\frac{Rd}{R+1}}).$$
 We obtain from  \eqref{boundGcal} that
\begin{align}\label{JB1}\mathcal{J}(\mathcal{M}_1) \ll P^{k+2t+2l}(\log P)^{k+2t+2l}P^{-\frac{d\kappa}{2^{d-1}(d-1)(R+1)}}|\mathfrak{n}|.\end{align}
For $X\le Q\le \frac{1}{8}P^{\frac{Rd}{R+1}}$, let
$$\mathcal{M}_2:=\mathcal{M}_2(Q)=\mathfrak{M}(2Q)\setminus \mathfrak{M}(Q).$$
 We appeal to \eqref{boundGcal} again to obtain
\begin{align}\label{JB2}\mathcal{J}(\mathcal{M}_2) \ll P^{k+2t+2l}(\log P)^{k+2t+2l}Q^{-\frac{\kappa}{2^{d-1}(d-1)R}}
\int_{\mathfrak{n}}\int_{\mathfrak{n}}K(\mathcal{M}_2,\boldsymbol{\alpha}_1-\boldsymbol{\alpha}_2)d\boldsymbol{\alpha}_1 d\boldsymbol{\alpha}_2.\end{align}
Since $\mathcal{M}_2\subseteq \mathfrak{M}(2Q)$, we have uniformly for $\boldsymbol{\alpha}_1$ that
\begin{align*}\int_{\mathfrak{n}}K(\mathcal{M}_2,\boldsymbol{\alpha}_1-\boldsymbol{\alpha}_2) d\boldsymbol{\alpha}_2 \ll Q^{R+1}P^{-Rd}.\end{align*}
Then we have
\begin{align}\label{trivialB2T}
\int_{\mathfrak{n}}\int_{\mathfrak{n}}K(\mathcal{M}_2,\boldsymbol{\alpha}_1-\boldsymbol{\alpha}_2)d\boldsymbol{\alpha}_1 d\boldsymbol{\alpha}_2\ll Q^{R+1}P^{-Rd}|\mathfrak{n}|.\end{align}
Putting  \eqref{trivialB2T} into \eqref{JB2}, we arrive at
\begin{align}\label{JB2bound}\mathcal{J}(\mathcal{M}_2) \ll  P^{k+2t+2l-Rd}(\log P)^{k+2t+2l}Q^{R+1-\frac{\kappa}{2^{d-1}(d-1)R}}|\mathfrak{n}|.\end{align}
By the standard dyadic argument, we obtain from  \eqref{JB1} and \eqref{JB2bound} together with $\kappa>2^{d-1}(d-1)R(R+1)$ that
\begin{align}\label{Jm}\mathcal{J}(\mathfrak{m}) \ll P^{k+2t+2l-Rd}(\log P)^{k+2t+2l}X^{R+1-\frac{\kappa}{2^{d-1}(d-1)R}}|\mathfrak{n}|,\end{align}
where
\begin{align*}\mathfrak{m}=\mathfrak{m}(X).\end{align*}

Now we deal with $\mathcal{J}(\mathfrak{M})$, where
$$\mathfrak{M}=\mathfrak{M}(X).$$
On applying the trivial bound to the summation over $\mathbf{x}$ in \eqref{forboundGtrivial}, we obtain
\begin{align*}|\mathcal{G}(\boldsymbol{\alpha}_1,\boldsymbol{\alpha}_2)|\le \sum_{\mathbf{x}}|\mathcal{H}(\boldsymbol{\alpha}_1,\boldsymbol{\alpha}_2,\mathbf{x})|.\end{align*}
Therefore, we have
\begin{align*}|\mathcal{J}(\mathfrak{M})| \le  \sum_{\mathbf{x}}\int_{\mathfrak{n}}\int_{\mathfrak{n}}|\mathcal{H}(\boldsymbol{\alpha}_1,\boldsymbol{\alpha}_2,\mathbf{x})|K(\mathfrak{M},\boldsymbol{\alpha}_1-\boldsymbol{\alpha}_2)
d\boldsymbol{\alpha}_1 d\boldsymbol{\alpha}_2.\end{align*}
Using the inequality
\begin{align*}2|\mathcal{H}(\boldsymbol{\alpha}_1,\boldsymbol{\alpha}_2,\mathbf{x})|
 \le  |\mathcal{E}(\boldsymbol{\alpha}_1;\mathbf{x})T(\boldsymbol{\alpha}_1;\mathbf{x})|^2+|\mathcal{E}(\boldsymbol{\alpha}_2;\mathbf{x})
T(\boldsymbol{\alpha}_2;\mathbf{x})|^2,\end{align*}
one can further deduce  by symmetry that
\begin{align*}|\mathcal{J}(\mathfrak{M})| \le  \sum_{\mathbf{x}}\int_{\mathfrak{n}}\int_{\mathfrak{n}}|\mathcal{E}(\boldsymbol{\alpha}_1;\mathbf{x})T(\boldsymbol{\alpha}_1;\mathbf{x})|^2
K(\mathfrak{M},\boldsymbol{\alpha}_1-\boldsymbol{\alpha}_2)d\boldsymbol{\alpha}_1 d\boldsymbol{\alpha}_2.\end{align*}
Note that
\begin{align*}\int_{\mathfrak{n}}K(\mathfrak{M},\boldsymbol{\alpha}_1-\boldsymbol{\alpha}_2) d\boldsymbol{\alpha}_2 \ll X^{R+1}P^{-Rd}\end{align*}
uniformly for $\boldsymbol{\alpha}_1$. Therefore,
\begin{align}\label{JM}|\mathcal{J}(\mathfrak{M})| \ll  X^{R+1}P^{-Rd}\sum_{\mathbf{x}}\int_{\mathfrak{n}}|\mathcal{E}(\boldsymbol{\alpha};\mathbf{x})T(\boldsymbol{\alpha};\mathbf{x})|^2d\boldsymbol{\alpha}.
\end{align}
We complete the proof by combining \eqref{Jm} and \eqref{JM}.
\end{proof}

We explain a special case of Lemma \ref{lemmameanJ}. Suppose that $G_i(\mathbf{x},\mathbf{y})$ is polynomial of $(\mathbf{x},\mathbf{y})$, and the degree of $G_i$ as a polynomial of $\mathbf{x}$ is smaller than $d$, that is,
\begin{align*}\deg_{\mathbf{x}}(G_i)<d.\end{align*} For $\mathbf{x}\in \Z^{m}$, we introduce
\begin{align*}T_0(\boldsymbol{\alpha};\mathbf{x})=\sum_{1\le \mathbf{y}\le P}\lambda(\mathbf{y})e\big(\boldsymbol{\alpha} \cdot \mathbf{G}(\mathbf{x},\mathbf{y})\big),\end{align*}
where the summation is taken over $l$-dimensional vectors in $\N^{l}$. Let
\begin{align}\label{defineI}\mathcal{I}:=\mathcal{I}_\mathfrak{n}=\sum_{1\le \mathbf{x}\le P}\Big|\int_{\mathfrak{n}}
e\big(\boldsymbol{\alpha} \cdot \mathbf{f}(\mathbf{x})\big)T_0(\boldsymbol{\alpha};\mathbf{x})d\boldsymbol{\alpha}\Big|^2,\end{align}
where $\mathbf{f}=(f_1,\ldots,f_R)$ and $f_1,\ldots,f_R$ are forms of degree $d$.

As a special case of Lemma \ref{lemmameanJ}, we have the following.
\begin{lemma}\label{lemmameanI}Let $\mathcal{I}$ be given in \eqref{defineI}. Let $\kappa>0$ be a real number satisfying
$\kappa>2^{d-1}(d-1)R(R+1)$. Suppose that
$$m-\dim V_{\mathbf{f}}^\ast \ge \kappa.$$
Let $X\le \frac{1}{4}P^{\frac{Rd}{R+1}}$. Then we have
\begin{align*}\mathcal{I}\ll &\, P^{m+2l-Rd}(\log P)^{m+2l}X^{R+1-\frac{\kappa}{2^{d-1}(d-1)R}}|\mathfrak{n}|
\\ & \ \ \ \ \ \ \ \ \ \ +X^{R+1}P^{-Rd}\sum_{1\le \mathbf{x}\le P}\int_{\mathfrak{n}}\big|T_0(\boldsymbol{\alpha};\mathbf{x})\big|^2d\boldsymbol{\alpha}.\end{align*}
\end{lemma}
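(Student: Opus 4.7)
The plan is to derive Lemma \ref{lemmameanI} as a direct specialization of Lemma \ref{lemmameanJ}, with no new argument required. Concretely, I would set the auxiliary parameter $t$ of Lemma \ref{lemmameanJ} equal to $0$, take $k=m$, identify $\mathbf{g}$ with $\mathbf{f}$, and identify the triple $(\mathbf{H},\mathbf{u},l)$ appearing there with $(\mathbf{G},\mathbf{y},l)$ appearing here. The convention fixed immediately before the definition \eqref{defineJ} states that when $t=0$ the factor $\mathcal{E}(\boldsymbol{\alpha};\mathbf{x})$ collapses to the constant $1$, so under this specialization $\mathcal{J}$ becomes exactly $\mathcal{I}$ and the sum $T(\boldsymbol{\alpha};\mathbf{x})$ becomes $T_0(\boldsymbol{\alpha};\mathbf{x})$. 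The hypotheses translate line by line: the dimension inequality $k-\dim V_{\mathbf{g}}^\ast\ge \kappa$ becomes $m-\dim V_{\mathbf{f}}^\ast\ge \kappa$, while the lower bound $\kappa>2^{d-1}(d-1)R(R+1)$ and the range $X\le \tfrac{1}{4}P^{Rd/(R+1)}$ carry over verbatim.

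With these identifications in place, substituting $t=0$ into the conclusion of Lemma \ref{lemmameanJ} gives
\begin{align*}
\mathcal{I}\ll P^{m+2l-Rd}(\log P)^{m+2l}X^{R+1-\frac{\kappa}{2^{d-1}(d-1)R}}|\mathfrak{n}|+X^{R+1}P^{-Rd}\sum_{1\le \mathbf{x}\le P}\int_{\mathfrak{n}}|T_0(\boldsymbol{\alpha};\mathbf{x})|^2 d\boldsymbol{\alpha},
\end{align*}
which is precisely the bound claimed. In this sense the proof reduces to one application of Lemma \ref{lemmameanJ} followed by the identification of symbols.

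Because Lemma \ref{lemmameanI} is a pure specialization, there is no genuine obstacle. The one point worth a brief sanity check is that the degenerate case $t=0$ has been correctly handled in the preparatory definitions of Section 4, and in particular that the reduction to Lemma \ref{lemmaboundS} inside the proof of Lemma \ref{lemmameanJ} does not require $t\ge 1$; inspection of that argument shows that the auxiliary polynomial $p(\mathbf{x})$ retains degree strictly less than $d$ in $\mathbf{x}$ regardless of the value of $t$, so the key difference-step ingredient (Lemma \ref{lemmaboundS} applied to $(\boldsymbol{\alpha}_1-\boldsymbol{\alpha}_2)\cdot\mathbf{g}(\mathbf{x})+p(\mathbf{x})$) goes through unchanged. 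Consequently no new mean-value estimate, minor-arc dissection, or geometric input is required, and the proof of Lemma \ref{lemmameanI} is essentially a one-line appeal to the previous lemma.
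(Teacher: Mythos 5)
Your proposal matches the paper's own derivation exactly: the authors state that Lemma \ref{lemmameanI} follows from Lemma \ref{lemmameanJ} by choosing $k=m$, $t=0$, $\mathcal{E}\equiv 1$, and $T(\boldsymbol{\alpha};\mathbf{x})=T_0(\boldsymbol{\alpha};\mathbf{x})$, which is precisely the specialization you perform. Your additional sanity check that the degenerate case $t=0$ poses no obstacle is correct and aligns with the convention set out before \eqref{defineJ}.
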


One can obtain Lemma \ref{lemmameanI} from Lemma \ref{lemmameanJ} by choosing $k=m$, $t=0$, $\mathcal{E}\equiv 1$ and $T(\boldsymbol{\alpha};\mathbf{x})=T_0(\boldsymbol{\alpha};\mathbf{x})$.

 \vskip3mm

\section{A crucial proposition}

Suppose that $\mathbf{F}=(F_1,\ldots,F_R)$ and $F_i=F_i(x_1,\ldots,x_n)$ are forms of degree $d$ in $n$ variables. We write $$\mathbf{x}=(\mathbf{y},\mathbf{z},\mathbf{w}),$$
 where $\mathbf{y}\in \N^m, \mathbf{z}\in \N^{s}, \mathbf{w}\in \N^t$ and
 $$m+s+t=n.$$
  Then each $F_i$ can be uniquely represented as
\begin{align}\label{representFintofgh1}F_i(\mathbf{y},\mathbf{z},\mathbf{w})=f_i(\mathbf{y})+g_i(\mathbf{y},\mathbf{z})
+h_i(\mathbf{y},\mathbf{z},\mathbf{w}),\end{align}
where the degree of $g_i$ as a polynomial of $\mathbf{y}$ is smaller than $d$ and the degree of $h_i$ as a polynomial of $(\mathbf{y},\mathbf{z})$ is also smaller than $d$, that is,
\begin{align*}\deg_{\mathbf{y}}(g_i)<d \ \textrm{ and } \ \deg_{(\mathbf{y},\mathbf{z})}(h_i)<d.\end{align*}
Let
\begin{align*}\mathbf{f}=(f_1,\ldots,f_R),\ \mathbf{g}=(g_1,\ldots,g_R)\ \textrm{ and } \ \mathbf{h}=(h_1,\ldots,h_R).\end{align*}

The purpose of this section is to find a mean value estimate for $S_\mathbf{F}(\boldsymbol{\alpha})$, which now can be represented as
\begin{align}\label{defineSFinsecttrans}S_{\mathbf{F}}(\boldsymbol{\alpha})=\sum_{1\le \mathbf{y}\le P}\sum_{1\le \mathbf{z}\le P}
\sum_{1\le \mathbf{w}\le P}\lambda(\mathbf{y})
\lambda(\mathbf{z})\lambda(\mathbf{w})e\big(\boldsymbol{\alpha}\cdot \mathbf{F}(\mathbf{y},\mathbf{z},\mathbf{w})\big).\end{align}
We define
\begin{align}\label{writeRsecttrans}\mathcal{E}_{\mathbf{y},\mathbf{z}}(\boldsymbol{\alpha})=
\sum_{1\le \mathbf{w}\le P}\lambda(\mathbf{w})e\big(\boldsymbol{\alpha}\cdot \mathbf{h}(\mathbf{y},\mathbf{z},\mathbf{w})\big).\end{align}
\begin{proposition}\label{prop}
Let $S_{\mathbf{F}}(\boldsymbol{\alpha})$ be given in \eqref{defineSFinsecttrans} with $\mathbf{F}=(F_1,\ldots,F_R)$ being (uniquely) represented as in \eqref{representFintofgh1}. Let $X\le\frac{1}{4}P^{\frac{Rd}{R+1}}$. Let $\kappa_1$ and $\kappa_2$ be two real numbers satisfying
$\min(\kappa_1,\kappa_2)>2^{d-1}(d-1)R(R+1)$.
Suppose that
\begin{align*}m-\dim V_{\mathbf{f}}^\ast \ge \kappa_1 \ \textrm{ and }\ m+s-\dim V_{\mathbf{g}}^\ast\ge \kappa_2. \end{align*}
Then we have
\begin{align*}\int_{\mathfrak{n}}S_{\mathbf{F}}(\boldsymbol{\alpha})d\boldsymbol{\alpha} \ll\, &P^{n-\frac{1}{2}Rd}(\log P)^{2n}
X^{\frac{1}{2}(R+1)-\frac{\kappa_1}{2^{d}(d-1)R}}|\mathfrak{n}|^{1/2}
\\ & +P^{n-\frac{3}{4}Rd}(\log P)^{2n}X^{\frac{3}{4}(R+1)-\frac{\kappa_2}{2^{d+1}(d-1)R}}|\mathfrak{n}|^{1/4}
\\ & +P^{m+s-Rd}(\log P)^{m+s}X^{R+1}\sup(\mathcal{E}),\end{align*}
where
\begin{align*}\sup(\mathcal{E})=\sup_{\boldsymbol{\alpha}\in \mathfrak{n}}\sup_{\mathbf{y}}\sup_{\mathbf{z}}|\mathcal{E}_{\mathbf{y},\mathbf{z}}(\boldsymbol{\alpha})|.\end{align*}
\end{proposition}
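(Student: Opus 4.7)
The plan is to isolate the outer variable $\mathbf{y}$, apply Cauchy--Schwarz and Lemma~\ref{lemmameanI} to exploit $m-\dim V_{\mathbf{f}}^\ast \ge \kappa_1$ (this produces the first term of the stated bound), and then to handle the resulting $L^2$-mean
\begin{align*}
\mathcal{Q}:=\sum_{\mathbf{y}}\int_{\mathfrak{n}}|T_0(\boldsymbol{\alpha};\mathbf{y})|^2 d\boldsymbol{\alpha}
\end{align*}
by a second mean-value argument that reintroduces the form $\mathbf{g}$ and appeals to Lemma~\ref{lemmameanJ} with $\kappa=\kappa_2$. The second step will yield the remaining two terms.

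For Step~1, rewrite $S_{\mathbf{F}}(\boldsymbol{\alpha})=\sum_{\mathbf{y}}\lambda(\mathbf{y})e(\boldsymbol{\alpha}\cdot\mathbf{f}(\mathbf{y}))T_0(\boldsymbol{\alpha};\mathbf{y})$, where $T_0(\boldsymbol{\alpha};\mathbf{y})=\sum_{\mathbf{z},\mathbf{w}}\lambda(\mathbf{z})\lambda(\mathbf{w})e(\boldsymbol{\alpha}\cdot(\mathbf{g}(\mathbf{y},\mathbf{z})+\mathbf{h}(\mathbf{y},\mathbf{z},\mathbf{w})))$. Since $\deg_{\mathbf{y}}(\mathbf{g}+\mathbf{h})<d$, Cauchy--Schwarz in $\mathbf{y}$ followed by Lemma~\ref{lemmameanI} applied with $\mathbf{f}$ of degree $d$ in $m$ variables gives
\begin{align*}
\Big|\int_{\mathfrak{n}}S_{\mathbf{F}}(\boldsymbol{\alpha})d\boldsymbol{\alpha}\Big|^2 \ll P^m(\log P)^{2m}\big(A_1 + X^{R+1}P^{-Rd}\mathcal{Q}\big),
\end{align*}
where $\sqrt{A_1}$, multiplied by $P^{m/2}(\log P)^m$ and using $n=m+s+t$, immediately yields the first term of the proposition.

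For Step~2, the key observation is that $\int_{\mathfrak{n}}|T_0|^2 d\boldsymbol{\alpha}$ is a nonnegative real, so expanding only one copy of $T_0$ yields
\begin{align*}
\int_{\mathfrak{n}}|T_0(\boldsymbol{\alpha};\mathbf{y})|^2 d\boldsymbol{\alpha} = \sum_{\mathbf{z}}\lambda(\mathbf{z})\int_{\mathfrak{n}}e(\boldsymbol{\alpha}\cdot\mathbf{g}(\mathbf{y},\mathbf{z}))\mathcal{E}_{\mathbf{y},\mathbf{z}}(\boldsymbol{\alpha})\overline{T_0(\boldsymbol{\alpha};\mathbf{y})}\,d\boldsymbol{\alpha}.
\end{align*}
Applying Cauchy--Schwarz on $\mathbf{z}$, summing in $\mathbf{y}$, and using Cauchy--Schwarz once more reduces the task to a bound for
\begin{align*}
\Sigma:=\sum_{\mathbf{y},\mathbf{z}}\Big|\int_{\mathfrak{n}}e(\boldsymbol{\alpha}\cdot\mathbf{g}(\mathbf{y},\mathbf{z}))\mathcal{E}_{\mathbf{y},\mathbf{z}}(\boldsymbol{\alpha})\overline{T_0(\boldsymbol{\alpha};\mathbf{y})}\,d\boldsymbol{\alpha}\Big|^2,
\end{align*}
which is exactly of the shape treated by Lemma~\ref{lemmameanJ}: outer variable $(\mathbf{y},\mathbf{z})\in\N^{m+s}$, forms $\mathbf{g}$ of degree $d$ in $m+s$ variables, inner sums $\mathcal{E}_{\mathbf{y},\mathbf{z}}$ (phase $\mathbf{h}$, $\deg_{(\mathbf{y},\mathbf{z})}\mathbf{h}<d$) and $\overline{T_0(\boldsymbol{\alpha};\mathbf{y})}$ (phase $-(\mathbf{g}+\mathbf{h})$ evaluated at fresh inner variables, which has $\deg_{(\mathbf{y},\mathbf{z})}<d$ since neither piece depends on the outer $\mathbf{z}$ and both have $\deg_{\mathbf{y}}<d$). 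Invoking Lemma~\ref{lemmameanJ} with $\kappa=\kappa_2$ and bounding the trivial term via $|\mathcal{E}_{\mathbf{y},\mathbf{z}}|^2 \le \sup(\mathcal{E})^2$ produces a quadratic inequality $\mathcal{Q}^2 \ll a + b\mathcal{Q}$, in which $a$ carries the $\kappa_2$-savings and a factor $|\mathfrak{n}|$, while $b$ carries $\sup(\mathcal{E})^2$.

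The elementary implication $\mathcal{Q}^2 \le a + b\mathcal{Q} \Rightarrow \mathcal{Q} \le \sqrt{a}+b$ splits $\mathcal{Q}$ into two pieces. Substituting back into the Step~1 bound and extracting a final square root, the $\sqrt{a}$-contribution becomes the second term of the proposition (the exponent $1/4$ on $|\mathfrak{n}|$ and the halving in the $X$-exponent both arise from the two nested square roots), and the $b$-contribution becomes the third term (with $\sup(\mathcal{E})$ to the first power). The main obstacle is the $\kappa_2$-extraction in Step~2: the direct expansion $|T_0|^2=T_0\overline{T_0}$ produces a phase that, as a polynomial in $\mathbf{y}$, has degree strictly less than $d$, so no degree-$d$ Weyl-type estimate can be invoked. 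The device of expanding only one copy of $T_0$ and treating $\overline{T_0}$ as an opaque inner sum restores $\mathbf{g}(\mathbf{y},\mathbf{z})$ as a form of full degree $d$ in the enlarged outer variable $(\mathbf{y},\mathbf{z})$, which is precisely what Lemma~\ref{lemmameanJ} needs to deliver the $\kappa_2$ improvement.
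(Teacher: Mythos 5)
Your proposal is correct and follows essentially the same route as the paper: Cauchy--Schwarz in $\mathbf{y}$ plus Lemma~\ref{lemmameanI} for the $\kappa_1$ saving, then the device of expanding only one copy of $|T_0|^2$ so that $\mathbf{g}(\mathbf{y},\mathbf{z})$ reappears as a genuine degree-$d$ form in the enlarged outer variable $(\mathbf{y},\mathbf{z})$, allowing Lemma~\ref{lemmameanJ} with $\kappa_2$ and yielding the quadratic inequality in $\mathcal{Q}=\mathcal{T}_{\mathfrak{n}}$ whose two roots give the second and third terms. You have correctly identified the key obstruction (a direct expansion of $|T_0|^2$ would degrade the degree in $\mathbf{y}$) and its resolution, which is precisely the argument in the paper.
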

\begin{proof}We introduce
\begin{align}\label{defineTinsect5}T(\boldsymbol{\alpha};\mathbf{y})=
\sum_{\mathbf{z}}\sum_{\mathbf{w}}\lambda(\mathbf{z})\lambda(\mathbf{w})e\big(\boldsymbol{\alpha}\cdot \mathbf{G}(\mathbf{y},\mathbf{z},\mathbf{w})\big),\end{align}
where
\begin{align}\label{defineGinsect5}\mathbf{G}(\mathbf{y},\mathbf{z},\mathbf{w})=
\mathbf{g}(\mathbf{y},\mathbf{z})+\mathbf{h}(\mathbf{y},\mathbf{z},\mathbf{w}).\end{align}
By \eqref{representFintofgh1} and \eqref{defineSFinsecttrans}, we have
\begin{align*}S_{\mathbf{F}}(\boldsymbol{\alpha})=\sum_{\mathbf{y}}\lambda(\mathbf{y})
e\big(\boldsymbol{\alpha} \cdot \mathbf{f}(\mathbf{y})\big)T(\boldsymbol{\alpha};\mathbf{y}),\end{align*}
and therefore,
\begin{align*}\int_{\mathfrak{n}}S_{\mathbf{F}}(\boldsymbol{\alpha})d\boldsymbol{\alpha}=\int_{\mathfrak{n}}\sum_{\mathbf{y}}\lambda(\mathbf{y})
e\big(\boldsymbol{\alpha} \cdot \mathbf{f}(\mathbf{y})\big)T(\boldsymbol{\alpha};\mathbf{y})d\boldsymbol{\alpha}.\end{align*}
Then we interchange the order of the integration and the summation to deduce that
\begin{align*}\int_{\mathfrak{n}}S_{\mathbf{F}}(\boldsymbol{\alpha})d\boldsymbol{\alpha}=\sum_{\mathbf{y}}\lambda(\mathbf{y})\int_{\mathfrak{n}}
e\big(\boldsymbol{\alpha} \cdot \mathbf{f}(\mathbf{y})\big)T(\boldsymbol{\alpha};\mathbf{y})d\boldsymbol{\alpha}.\end{align*}

By Cauchy's inequality,
\begin{align}\label{SsquaretoI}\Big|\int_{\mathfrak{n}}S_{\mathbf{F}}(\boldsymbol{\alpha})d\boldsymbol{\alpha}\Big|^2\le P^m(\log P)^{2m}\,\mathcal{I}_{\mathfrak{n}},\end{align}
where
\begin{align*}\mathcal{I}_{\mathfrak{n}}=\sum_{\mathbf{y}}\Big|\int_{\mathfrak{n}}
e\big(\boldsymbol{\alpha}\cdot \mathbf{f}(\mathbf{y})\big)T(\boldsymbol{\alpha};\mathbf{y})d\boldsymbol{\alpha}\Big|^2.\end{align*}
By Lemma \ref{lemmameanI},
\begin{align}\label{ItoT}\mathcal{I}_{\mathfrak{n}}\ll P^{m+2l-Rd}(\log P)^{m+2l}X^{R+1-\frac{\kappa_1}{2^{d-1}R(d-1)}}|\mathfrak{n}|+X^{R+1}P^{-Rd}\mathcal{T}_{\mathfrak{n}},\end{align}
where
$$l=s+t$$
and
\begin{align}\label{defineTn}\mathcal{T}_{\mathfrak{n}} =\sum_{\mathbf{y}}\int_{\mathfrak{n}}\big|T(\boldsymbol{\alpha};\mathbf{y})\big|^2d\boldsymbol{\alpha}.\end{align}

Now we deal with $\mathcal{T}_{\mathfrak{n}}$. By \eqref{writeRsecttrans}, \eqref{defineTinsect5} and \eqref{defineGinsect5}, we have
\begin{align*}T(\boldsymbol{\alpha};\mathbf{y})=\sum_{\mathbf{z}}\lambda(\mathbf{z})e\big(\boldsymbol{\alpha} \cdot \mathbf{g}(\mathbf{y},\mathbf{z})\big)
\mathcal{E}_{\mathbf{y},\mathbf{z}}(\boldsymbol{\alpha}).\end{align*}
We deduce that
\begin{align*}\int_{\mathfrak{n}}\big|T(\boldsymbol{\alpha};\mathbf{y})\big|^2d\boldsymbol{\alpha}=&\int_{\mathfrak{n}}T(\boldsymbol{\alpha};\mathbf{y})
T(-\boldsymbol{\alpha};\mathbf{y})d\boldsymbol{\alpha}\\ = & \sum_{\mathbf{z}}\lambda(\mathbf{z})\int_{\mathfrak{n}}e\big(\boldsymbol{\alpha} \cdot \mathbf{g}(\mathbf{y},\mathbf{z})\big)
\mathcal{E}_{\mathbf{y},\mathbf{z}}(\boldsymbol{\alpha})
T(-\boldsymbol{\alpha};\mathbf{y})d\boldsymbol{\alpha}.\end{align*}
By Cauchy's inequality,
\begin{align}\label{TsquaretoJ1}\Big(\int_{\mathfrak{n}}\big|T(\boldsymbol{\alpha};\mathbf{y})\big|^2d\boldsymbol{\alpha}\Big)^2 \le P^s(\log P)^{2s}\,\mathcal{J}_{\mathfrak{n},\mathbf{y}},\end{align}
where
\begin{align*}\mathcal{J}_{\mathfrak{n},\mathbf{y}}=\sum_{\mathbf{z}}\Big|\int_{\mathfrak{n}}e\big(\boldsymbol{\alpha}\cdot \mathbf{g}(\mathbf{y},\mathbf{z})\big)
\mathcal{E}_{\mathbf{y},\mathbf{z}}(\boldsymbol{\alpha})
T(-\boldsymbol{\alpha};\mathbf{y})d\boldsymbol{\alpha}\Big|^2.\end{align*}
Then we conclude from \eqref{defineTn} and \eqref{TsquaretoJ1} that
\begin{align}\label{TsquaretoJ}\mathcal{T}_{\mathfrak{n}}^2\le P^{m+s}(\log P)^{2s}\,\mathcal{J}_{\mathfrak{n}},\end{align}
where
\begin{align*}\mathcal{J}_{\mathfrak{n}}=\sum_{\mathbf{y}}\mathcal{J}_{\mathfrak{n},\mathbf{y}}.\end{align*}

On applying Lemma \ref{lemmameanJ} with $k=m+s$, $l=s+t$, we obtain
\begin{align}\label{JtoK}\mathcal{J}_{\mathfrak{n}}\ll &\, P^{n+t+2l-Rd}(\log P)^{n+t+2l}X^{R+1-\frac{\kappa_2}{2^{d-1}(d-1)R}}|\mathfrak{n}|
+X^{R+1}P^{-Rd}\mathcal{K}_{\mathfrak{n}},\end{align}
where
\begin{align*}\mathcal{K}_{\mathfrak{n}}=\sum_{\mathbf{y}}\sum_{\mathbf{z}}\int_{\mathfrak{n}}|T(\boldsymbol{\alpha};\mathbf{y})|^2
|\mathcal{E}_{\mathbf{y},\mathbf{z}}(\boldsymbol{\alpha})|^2d\boldsymbol{\alpha}.\end{align*}
On recalling the definition of $\sup(\mathcal{E})$, we have
\begin{align*}\sum_{\mathbf{z}}|\mathcal{E}_{\mathbf{y},\mathbf{z}}(\boldsymbol{\alpha})|^2\le P^{s}\sup(\mathcal{E})^2,\end{align*}
and  we further obtain
\begin{align}\label{Kbound}\mathcal{K}_{\mathfrak{n}}\le \mathcal{T}_{\mathfrak{n}}\, P^{s}\sup(\mathcal{E})^2.\end{align}

We put \eqref{Kbound} into \eqref{JtoK} to obtain
\begin{align*}\mathcal{J}_{\mathfrak{n}}\ll &\, P^{n+t+2l-Rd}(\log P)^{n+t+2l}X^{R+1-\frac{\kappa_2}{2^{d-1}(d-1)R}}|\mathfrak{n}|
\\ & \ \ \ \ \ +
\mathcal{T}_{\mathfrak{n}}\, P^{s-Rd}X^{R+1} \sup(\mathcal{E})^2,\end{align*}
and then we have by \eqref{TsquaretoJ},
\begin{align*}\mathcal{T}_{\mathfrak{n}} ^2\ll\, & P^{2n+2l-Rd}(\log P)^{n+s+3l}X^{R+1-\frac{\kappa_2}{2^{d-1}(d-1)R}}|\mathfrak{n}|
\\ \ &\ \ \ \ \ \ \ \ \ \ \ \ \ \ \ \ +
\mathcal{T}_{\mathfrak{n}}\, P^{m+2s-Rd}(\log P)^{2s}X^{R+1}\sup(\mathcal{E})^2.\end{align*}
Therefore, we  conclude
\begin{align}\label{Tbound}\mathcal{T}_{\mathfrak{n}} \ll &\,P^{n+l-\frac{1}{2}Rd}(\log P)^{\frac{1}{2}(n+s+3l)}X^{\frac{1}{2}(R+1)-\frac{\kappa_2}{2^{d}(d-1)R}}|\mathfrak{n}|^{1/2}\notag
\\ & \ \ \ +
P^{m+2s-Rd}(\log P)^{2s}X^{R+1}\sup(\mathcal{E})^2.\end{align}
With the estimate \eqref{Tbound}, we deduce from \eqref{ItoT} that
\begin{align*}\mathcal{I}_{\mathfrak{n}}\ll & P^{m+2l-Rd}(\log P)^{m+2l}X^{R+1-\frac{\kappa_1}{2^{d-1}(d-1)R}}|\mathfrak{n}|
\\ &+P^{n+l-\frac{3}{2}Rd}(\log P)^{\frac{1}{2}(n+s+3l)}X^{\frac{3}{2}(R+1)-\frac{\kappa_2}{2^{d}(d-1)R}}|\mathfrak{n}|^{1/2}
\\ & +
P^{m+2s-2Rd}(\log P)^{2s}X^{2R+2}\sup(\mathcal{E})^2.\end{align*}
Finally, by \eqref{SsquaretoI}, we have
\begin{align*}\Big|\int_{\mathfrak{n}}S_{\mathbf{F}}(\boldsymbol{\alpha})d\boldsymbol{\alpha}\Big|^2\ll \,&P^{2n-Rd}(\log P)^{3n}X^{R+1-\frac{\kappa_1}{2^{d-1}(d-1)R}}|\mathfrak{n}|
\\ &+P^{2n-\frac{3}{2}Rd}(\log P)^{3n}X^{\frac{3}{2}(R+1)-\frac{\kappa_2}{2^{d}(d-1)R}}|\mathfrak{n}|^{1/2}
\\ & +
P^{2m+2s-2Rd}(\log P)^{2m+2s}X^{2R+2}\sup(\mathcal{E})^2.\end{align*}
This completes the proof.
\end{proof}

We give a remark to Proposition \ref{prop}. We shall apply Proposition \ref{prop} with
$$\mathfrak{n}=\mathfrak{M}(2Q)\setminus \mathfrak{M}(Q).$$
Note that $\mathfrak{n}\ll Q^{R+1}P^{-Rd}$. We conclude from Proposition \ref{prop} that
\begin{align*}\int_{\mathfrak{n}}S_\mathbf{F}(\boldsymbol{\alpha})d\boldsymbol{\alpha} \ll\, &P^{n-Rd}(\log P)^{2n}X^{\frac{1}{2}(R+1)-\frac{\kappa_1}{2^{d}(d-1)R}}Q^{\frac{1}{2}(R+1)}
\\ &\ \ +P^{n-Rd}(\log P)^{2n}X^{\frac{3}{4}(R+1)-\frac{\kappa_2}{2^{d+1}(d-1)R}}Q^{\frac{1}{4}(R+1)}
\\ &\ \ +
P^{n-Rd}(\log P)^{n}X^{R+1}\sup(\mathcal{E}).\end{align*}
A trivial bound of $\sup(\mathcal{E})$ is
$$\sup(\mathcal{E})\ll P^{t}(\log P)^t.$$
 Suppose that one has a nontrivial estimate
\begin{align}\label{nontrivialE}\sup(\mathcal{E}) \ll P^{t}(\log P)^t Q^{-\omega_{d,R}} \ \textrm{ for some }\ \omega_{d,R}>0.\end{align}
On choosing $$X=Q^{\frac{1}{2(R+1)}\omega_{d,R}},$$
  one may obtain a nice upper bound of $\int_{\mathfrak{n}}S_\mathbf{F}(\boldsymbol{\alpha})d\boldsymbol{\alpha}$ provided that both $\kappa_1$ and $\kappa_2$ are large enough depending on $d,R$ and $\omega_{d,R}$. The objective of the next section is to find a nontrivial estimate of $\sup(\mathcal{E})$ as in \eqref{nontrivialE}.
 \vskip3mm

\section{Exponential sums over primes}

Let $m\ge 1$. Suppose that $d_1,\ldots,d_m\ge 1$ and $d_1+\cdots+d_m=d$. Let $\mathcal{B}_m(P)$ be the box in $m$-dimensional space defined by
\begin{align*}b_j'P< x_j\le b_{j}''P,\end{align*}
where $0<b_j'<b_{j}''<1$ are fixed constants for $1\le j\le m$.

We study the exponential summation
\begin{align*}
\Upsilon_m(\alpha)=\sum_{x_1,\ldots,x_m\in \mathcal{B}_m(P)}\Lambda(x_1)\cdots \Lambda(x_m)e\Big(f(x_1,\ldots,x_m)\Big),\end{align*}
where $f$ is a polynomial of $x_1,\ldots,x_m$ of degree $d$ with real coefficients and the coefficient of $x_1^{d_1}\cdots x_m^{d_m}$ in $f$ is $\alpha$. In particular, $f$ is in the form
$$f(x_1,\ldots,x_m)=\alpha x_1^{d_1}\cdots x_m^{d_m}+g(x_1,\ldots,x_m),$$
 where the coefficient of $x_1^{d_1}\cdots x_m^{d_m}$ in $g$ is zero and $\deg(g)\le d$.

We define
\begin{align*}\mathfrak{R}(Q)=\bigcup_{1\le q\le
Q}\bigcup_{\substack{a\in \Z
 \\ (a,q)=1}}\Big[\frac{a}{q}-\frac{Q}{qP^d},\ \frac{a}{q}+\frac{Q}{qP^d}\Big]\end{align*}
and
\begin{align*}\mathfrak{r}(Q)=\R\setminus \mathfrak{R}(Q).\end{align*}
Note that when $R=1$, $\mathfrak{M}(Q)$ and $\mathfrak{m}(Q)$ coincide with $\mathfrak{R}(Q)$ and $\mathfrak{r}(Q)$ in the sense of modulo one.

When $m\ge 2$, one can obtain an upper bound of $\Upsilon_m(\alpha)$ with $\Lambda(\cdot)$ replaced by an arbitrary function.
\begin{lemma}\label{lemma61}Let $m\ge 2$. Suppose that
$Q\le P$. Let $\alpha\in \mathfrak{r}(Q)$. Then we have
\begin{align*}\Upsilon_m(\alpha)\ll &\, P^m(\log P)^{2m}Q^{-\frac{1}{2^d}}.\end{align*}
\end{lemma}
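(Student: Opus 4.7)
The plan is to establish the bound via iterated Weyl differencing, distributing the differencings among the variables so as to isolate the coefficient $\alpha$ of the distinguished monomial $x_1^{d_1}\cdots x_m^{d_m}$. Because $|\Lambda(x_i)|\le\log P$, each $\Lambda$-factor is handled only through this pointwise bound, which is why the argument extends to any weight of comparable size and justifies the remark preceding the lemma. The exponent $2^d$ in the saving $Q^{-1/2^d}$ arises naturally from one initial Cauchy--Schwarz step followed by $d-1$ Weyl differencings, for a total of $d$ squaring operations on $|\Upsilon_m(\alpha)|$.

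Concretely, I would first apply Cauchy--Schwarz to pass from $|\Upsilon_m(\alpha)|$ to $|\Upsilon_m(\alpha)|^2$, separating one chosen variable (say $x_m$) from the rest. I would then perform $d-1$ further Weyl differencings, distributed so that $d_i$ differencings occur in variable $x_i$ for $i<m$ and $d_m-1$ in $x_m$. The crucial combinatorial observation is that a monomial $x_1^{e_1}\cdots x_m^{e_m}$ of total degree at most $d$ survives this differencing scheme only if $e_i\ge d_i$ for $i<m$ and $e_m\ge d_m-1$; the degree constraint $\sum_i e_i\le d=\sum_i d_i$ then leaves only two possibilities: either $e_i=d_i$ for all $i$, or $e_i=d_i$ for $i<m$ together with $e_m=d_m-1$. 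In the first case the surviving monomial reduces to $\alpha\cdot d_1!\cdots d_m!\cdot x_m\cdot\prod_{i,j}h_i^{(j)}$, a linear term in $x_m$ whose coefficient is proportional to $\alpha$; in the second case it contributes only to the constant term in $x_m$. Applying the standard linear exponential sum bound $|\sum_{x_m} e(Lx_m)|\ll\min(P,\|L\|^{-1})$ to the final $x_m$-sum then reduces matters to estimating a sum over the $d-1$ difference variables of $\min(P,\|c\alpha\prod_{i,j}h_i^{(j)}\|^{-1})$ with $c=d_1!\cdots d_m!$.

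Summing this min-expression over the $d-1$ difference variables and invoking the minor arcs hypothesis $\alpha\in\mathfrak{r}(Q)$ via a classical divisor argument (in the spirit of Davenport's Lemma 3.3, which underlies the proofs of Lemmas \ref{lemmaboundgammatoN}--\ref{lemmaboundS} above) produces a saving of order $Q^{-1}$. Extracting the $2^d$-th root yields the desired $Q^{-1/2^d}$; combining with the accumulated weight factors---specifically $(\log P)^{m\cdot 2^d}$ sitting inside $|\Upsilon_m(\alpha)|^{2^d}$, which becomes $(\log P)^m$ after the $2^d$-th root, together with additional logarithmic contributions from the Cauchy--Schwarz extractions---produces the claimed $(\log P)^{2m}$.

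The main technical obstacle is the combinatorial bookkeeping of monomial degrees throughout the iterated differencings: one must verify that the coefficient of the surviving linear term in $x_m$ is cleanly proportional to $\alpha$ and not contaminated by other monomials of $f$ (for instance by the $x_1^d$-coefficient when $d_1<d$). This is precisely where the hypothesis $m\ge 2$---which forces $d_i\le d-1$ for every $i$ and hence creates a strict gap between $\deg_{x_i} f$ and $d_i$ in general---plays the essential role, through the explicit distribution of differencings described above.
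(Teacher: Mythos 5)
Your overall strategy---iterated Weyl differencing to isolate the coefficient $\alpha$ of the distinguished monomial, followed by a divisor-type bound on the resulting sum of $\min$-expressions over the difference variables---is exactly the paper's strategy, and your key structural observation that the surviving $x_m$-linear coefficient after differencing is precisely $\alpha d_1!\cdots d_m!\,\Pi$ (with no contamination from other coefficients of $f$) is correct and is indeed the crux of the argument. Your enumeration of surviving monomials is, however, incomplete: besides $x_1^{d_1}\cdots x_m^{d_m}$ and $x_1^{d_1}\cdots x_m^{d_m-1}$, the degree-$d$ monomials $x_1^{d_1}\cdots x_j^{d_j+1}\cdots x_m^{d_m-1}$ with $j<m$ also survive the scheme; they contribute terms linear in $x_j$ but independent of $x_m$, so the conclusion you draw is still valid.

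Where the proposal has a genuine gap is in the arrangement of the $d$ squaring operations and the treatment of the $\Lambda$-weights. You place a Cauchy--Schwarz step first, ``separating $x_m$,'' and then want $d_m-1$ Weyl differencings in $x_m$. If $x_m$ is the variable left inside the square, expanding $|\sum_{x_m}(\cdot)|^2$ already constitutes one Weyl differencing in $x_m$, so with $d_m-1$ further differencings you over-difference $x_m$ and lose the linear term (hence the $\min(P,\|\cdot\|^{-1})$ structure). If instead $x_m$ is pulled outside the square, the remaining variables are no longer under an absolute value and cannot be differenced. The paper's arrangement is different and avoids both pitfalls: it first applies H\"older (not Cauchy) to raise $\Upsilon_m$ to the $2^{d_1}$th power, isolating $x_1$; it then differences $x_1$ exactly $d_1$ times, converting $\Lambda(x_1)$ into a product of $\Lambda$-values at shifted arguments which is bounded trivially by $(\log P)^{2^{d_1}}$, while $\Lambda(x_2),\dots,\Lambda(x_m)$ have already been removed trivially; only then does it difference $x_2,\dots,x_m$ (with $d_2,\dots,d_{m-1},d_m-1$ differencings), arriving at $\Upsilon_m^{2^{d-1}}\ll P^{\cdots}\,\mathcal U(\alpha)$. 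The extra factor of two in the exponent comes at the \emph{end}: the paper applies Cauchy--Schwarz to $\mathcal U(\alpha)=\sum_{\mathbf y}\min(P,\|c\alpha y_1\cdots y_{d-1}\|^{-1})$ precisely so that collapsing the $(d-1)$-dimensional sum to a one-variable sum $\sum_x\min(P,\|\alpha x\|^{-1})$ costs only powers of $\log P$ rather than the $P^\varepsilon$ that a naive divisor-function bound on the multiplicity of $y_1\cdots y_{d-1}$ would produce (the relevant reference is Lemma 2.2 of Vaughan, not Davenport's Lemma 3.3). Because $Q$ is later taken as small as a power of $\log P$, a $P^\varepsilon$ loss at this stage would destroy the estimate; the final Cauchy is therefore essential, and your proposal omits it. Finally, the role of $m\ge2$ is not the ``gap between $\deg_{x_i}f$ and $d_i$'' you describe: it is that for $m\ge2$ the last, weight-free variable $x_m$ remains available after the $\Lambda$-weights have been disposed of, so the final $x_m$-sum is a genuine unweighted linear exponential sum that can be bounded by $\min(P,\|\cdot\|^{-1})$; for $m=1$ the single variable still carries the differenced $\Lambda$-weight and this device fails, which is why the paper treats $m=1$ separately via Vaughan's identity.
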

\begin{proof}We define the difference operator $\Delta_1$
$$\Delta_1(\psi(x);x;h)=\psi(x+h)-\psi(x),$$
and $\Delta_j$ iteratively
$$\Delta_j(\psi(x);x;h_1,\ldots,h_j)=\Delta_1(\varphi(x);x;h_j),$$
where $\varphi(x)$ is a function of $x$ depending on $h_1,\ldots,h_{j-1}$ defined as
$$\varphi(x)=\Delta_{j-1}(\psi(x);x;h_1,\ldots,h_{j-1}).$$
Note that if $\psi(x)=x^k$ and $j\le k$, then
$$\Delta_j(\psi(x);x;h_1,\ldots,h_j)=h_1\cdots h_jp(x;j,k,h_1,\ldots,h_j),$$
 where $p(x;j,k,h_1,\ldots,h_j)$ is a polynomial of $x$ of degree $k-j$ with leading coefficient $k!/(k-j)!$. Therefore, if $\psi(x)=x^k$ and $j> k$, then
 $$\Delta_j(\psi(x);x;h_1,\ldots,h_j)=0.$$

We shall first apply the difference argument to deal with the summation over $x_1$. By H\"older's inequality,
\begin{align*}\Upsilon_m(\alpha)^{2^{d_1}}\ll & P^{(m-1)(2^{d_1}-1)}(\log P)^{(m-1)2^{d_1}}
\\ & \times\sum_{x_2,\ldots,x_{m}}\Big|\sum_{x_1}\Lambda(x_1)e\big(f(x_1,\ldots,x_{m})\big)\Big|^{2^{d_1}}.\end{align*}
We apply the difference argument (see Lemma 2.3 in \cite{V}) to deduce that
\begin{align*}&\Big|\sum_{x_1}\Lambda(x_1)e\big(f(x_1,\ldots,x_{m})\big)\Big|^{2^{d_1}}
\\ \ll &\, P^{2^{d_1}-d_1-1}\sum_{\mathbf{h}^{(1)}}\sum_{x_1}\lambda(x_1,\mathbf{h}^{(1)})
e\big(f_1(x_1,\ldots,x_{m},\mathbf{h}^{(1)})\big),\end{align*}
where $\mathbf{h}^{(1)}=(h_1^{(1)},\ldots,h_{d_1}^{(1)})\in \Z^{d_1}$ is a $d_1$-dimensional vector,
\begin{align*}\lambda(x_1,\mathbf{h}^{(1)})=\prod_{I\subseteq \{1,2,\ldots,d_1\}}\Lambda(x_1+\sum_{i\in I}h_i^{(1)})\end{align*}
and
\begin{align*}f_1(x_1,\ldots,x_{m},\mathbf{h}^{(1)})
=\Delta_{d_1}\big(f(x_1,\ldots,x_{m});x_1;\mathbf{h}^{(1)}\big).\end{align*}
Therefore, we obtain
\begin{align*}\Upsilon_m(\alpha)^{2^{d_1}} \ll &\, P^{m(2^{d_1}-1)-d_1}(\log P)^{(m-1)2^{d_1}}
\\ & \times \sum_{x_2,\ldots,x_{m}}\sum_{\mathbf{h}^{(1)}}\sum_{x_1}\lambda(x_1,\mathbf{h}^{(1)})
e\big(f_1(x_1,\ldots,x_{m},\mathbf{h}^{(1)})\big).\end{align*}
Note that $\lambda(x_1,\mathbf{h}^{(1)})\ll (\log P)^{2^{d_1}}$, we further obtain
\begin{align*}\Upsilon_m(\alpha)^{2^{d_1}} \ll &\, P^{m(2^{d_1}-1)-d_1}(\log P)^{m2^{d_1}}
\\ & \times \sum_{\mathbf{h}^{(1)}}\sum_{x_1}\Big|\sum_{x_2,\ldots,x_{m}}
e\big(f_1(x_1,\ldots,x_{m},\mathbf{h}^{(1)})\big)\Big|.\end{align*}

Let $d_j'=d_j$ for $1\le j\le m-1$ and let $d_{m}'=d_m-1$. Then we apply the difference argument $d_j'$ times to the summation over $x_j$ for $2\le j\le m$ and conclude that
\begin{align}\Upsilon_m(\alpha)^{2^{d-1}}\ll &\, P^{m2^{d-1}-d-m+1}(\log P)^{m2^{d-1}}\notag
\\ &\times \sum_{\mathbf{h}^{(1)}}\sum_{x_1}\cdots \sum_{\mathbf{h}^{(m)}}\sum_{x_m}e\big(f_m(x_1,\ldots,x_{m},\mathbf{h}^{(1)},\ldots,\mathbf{h}^{(m)})\big),\label{boundUps}\end{align}
where for $2\le j\le m$, $\mathbf{h}^{(j)}$ is a $d_j'$-dimensional vector in $\Z^{d_j'}$  and $f_j$ is defined iteratively in the following way
\begin{align*}f_j(x_1,\ldots,x_{m},\mathbf{h}^{(1)},\ldots,\mathbf{h}^{(j)})=\Delta_{d_j'}\big(
f_{j-1}(x_1,\ldots,x_{m},\mathbf{h}^{(1)},\ldots,\mathbf{h}^{(j-1)});x_j;\mathbf{h}^{(j)}\big).\end{align*}
Note that $f_m(x_1,\ldots,x_{m},\mathbf{h}^{(1)},\ldots,\mathbf{h}^{(m)})$ can be represented in the form
$$\alpha d_1!\cdots d_m!x_{m} \Pi +\widetilde{f}(x_1,\ldots,x_{m-1},\mathbf{h}^{(1)},\ldots,\mathbf{h}^{(m)}),$$
 where $\widetilde{f}$ is independent of $x_m$ and $\Pi:=\Pi(\mathbf{h}^{(1)},\ldots,\mathbf{h}^{(m)})$ is
\begin{align*}\Pi=\prod_{1\le j\le m}\prod_{1\le i\le d_j'}h_{i}^{(j)}.\end{align*}
Now we conclude from \eqref{boundUps} that
\begin{align}\label{boundups2}\Upsilon_m(\alpha)^{2^{d-1}}\ll P^{m2^{d-1}-d}(\log P)^{m2^{d-1}} \sum_{\mathbf{h}^{(1)}}\cdots \sum_{\mathbf{h}^{(m)}}\min(P,\|\alpha d_1!\cdots d_m!\Pi\|^{-1}).\end{align}

We write
\begin{align*}\mathcal{U}(\alpha)=\sum_{1\le y_1,\ldots,y_{d-1}\le P}\min(P,\|\alpha d_1!\cdots d_m!y_1\cdots y_{d-1}\|^{-1}),\end{align*}
and by \eqref{boundups2},
\begin{align}\label{boundups3}\Upsilon_m(\alpha)^{2^{d-1}}\ll &\, P^{m2^{d-1}-1}(\log P)^{m2^{d-1}} +
P^{m2^{d-1}-d}(\log P)^{m2^{d-1}}\mathcal{U}(\alpha).\end{align}
 By Cauchy's inequality,
\begin{align}\label{boundU1}\mathcal{U}(\alpha)^2\ll P^d(\log P)^{d-1}\sum_{1\le x\le d_1!\cdots d_m!P^{d-1}}\min(P,\|\alpha x\|^{-1}).\end{align}

By Dirichlet's approximation theorem, there exist $a\in \Z, q\in \N$ such that
$$1\le q\le P^{d}Q^{-1}, \ (a,q)=1\ \textrm{ and }\ |\alpha-\frac{a}{q}|\le \frac{Q}{qP^d}.$$
Since $\alpha \in \mathfrak{r}(Q)$, we have $q>Q$. Then by Lemma 2.2 in \cite{V}, we conclude
\begin{align}\label{boundU2}\sum_{1\le x\le d_1!\cdots d_m!P^{d-1}}\min(P,\|\alpha x\|^{-1})\ll P^{d}Q^{-1}\log P,\end{align}
and it follows from \eqref{boundU1} and \eqref{boundU2} that
\begin{align}\label{boundups0}\mathcal{U}(\alpha)\ll P^d(\log P)^{d}Q^{-\frac{1}{2}}.\end{align}
We insert \eqref{boundups0} into \eqref{boundups3} to conclude that
\begin{align*}\Upsilon_m(\alpha)\ll &\, P^m(\log P)^{2m}Q^{-\frac{1}{2^d}}.\end{align*}
This competes the proof.
\end{proof}

When $m=1$, $\Upsilon_m(\alpha)$ is in the form
\begin{align*}\Upsilon(\alpha)=\sum_{b'P<x\le b''P}\Lambda(x)e\big(f(x)\big),\end{align*}
where $0<b'<b''<1$ be two fixed constants and
\begin{align}\label{notationf}f(x):=f(x;\alpha;\alpha_1,\ldots,\alpha_{d-1})=\alpha x^d+ \sum_{j=1}^{d-1}\alpha_jx^j.\end{align}

To deal with $\Upsilon(\alpha)$, we need to make use of the method in the topic on exponential sums over primes. In the special case $f(x)=\alpha x^d$, Kawada and Wooley \cite{KawadaWooley} investigated $\Upsilon(\alpha)$ with applications to the Waring-Goldbach problem. To handle $\Upsilon(\alpha)$ for general $f$, we follow their argument closely (except that Lemma 6.1 of Vaughan \cite{V} does not work when $f$ is a general polynomial). The upper bound of $\Upsilon(\alpha)$ was finally obtained in Lemma \ref{lemmaexpprime}. We point out that Lemma \ref{lemmaexpprime} can be improved with more effects. In particular, when the degree $d\ge 6$, Lemma \ref{lemmaexpprime} can be improved by making use of the recent development \cite{BDG,W2012,W2019} on Vinogradov's mean value theorems.

\begin{lemma}[Weyl's inequality]\label{lemmaweyl}Let $\alpha\in \R$, $a\in \Z$ and $q\in \N$. Suppose that $|\alpha-a/q|\le q^{-2}$ and $(a,q)=1$. Then we have
\begin{align}\label{weyl}\sum_{x\le X}e\Big(\alpha x^d+ \sum_{j=1}^{d-1}\alpha_jx^j\Big)\ll X^{1+\varepsilon} \Big(
\frac{1}{X}+\frac{1}{q}+\frac{q}{X^d}\Big)^{2^{1-d}}.\end{align}
\end{lemma}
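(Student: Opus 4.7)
The plan is to apply the standard Weyl differencing argument $d-1$ times to eliminate the phase entirely, reducing the bound to a linear exponential sum that can be treated by the Diophantine information on $\alpha$. First, I would observe that the lower-order terms $\alpha_j x^j$ with $1\le j\le d-1$ do not obstruct the argument: using the iterated difference operator $\Delta_{d-1}(\cdot;x;h_1,\ldots,h_{d-1})$ introduced in the proof of Lemma \ref{lemma61}, one has
\begin{align*}
\Delta_{d-1}\bigl(f(x);x;h_1,\ldots,h_{d-1}\bigr)=d!\,\alpha\, h_1\cdots h_{d-1}\,x+C(h_1,\ldots,h_{d-1}),
\end{align*}
where $C$ is independent of $x$, because any monomial $x^j$ with $j<d-1$ vanishes after $d-1$ differences and the $j=d-1$ monomial contributes only an $x$-independent constant. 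Consequently the auxiliary coefficients $\alpha_1,\ldots,\alpha_{d-1}$ never appear in the coefficient of $x$ after differencing, which is why the implied constant in \eqref{weyl} depends only on $d$ and $\varepsilon$.

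Next, I would apply the Cauchy--Schwarz / differencing inequality (Lemma 2.3 of Vaughan \cite{V}) $d-1$ times to obtain
\begin{align*}
\Bigl|\sum_{x\le X}e\bigl(f(x)\bigr)\Bigr|^{2^{d-1}}\ll X^{2^{d-1}-d}\sum_{|h_1|,\ldots,|h_{d-1}|\le X}\min\bigl(X,\,\|d!\,\alpha\, h_1\cdots h_{d-1}\|^{-1}\bigr),
\end{align*}
after discarding the $x$-independent constant $C$ by taking absolute values of the inner linear exponential sum in $x$.

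Then I would substitute $y=h_1\cdots h_{d-1}$ and apply the divisor bound for the number of ordered factorisations of $y$ into $d-1$ factors, namely $\tau_{d-1}(y)\ll y^\varepsilon$, reducing the estimate to
\begin{align*}
\sum_{|h_i|\le X}\min\bigl(X,\,\|d!\,\alpha\, h_1\cdots h_{d-1}\|^{-1}\bigr)\ll X^{\varepsilon}\sum_{1\le y\le d!\,X^{d-1}}\min\bigl(X,\,\|\alpha y\|^{-1}\bigr).
\end{align*}
The latter sum is the classical object treated by Lemma 2.2 of Vaughan \cite{V}: together with the hypothesis $|\alpha-a/q|\le q^{-2}$ and $(a,q)=1$, it yields
\begin{align*}
\sum_{1\le y\le d!\,X^{d-1}}\min\bigl(X,\,\|\alpha y\|^{-1}\bigr)\ll X^{\varepsilon}\Bigl(\frac{X^{d}}{q}+X^{d-1}+q\Bigr).
\end{align*}

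Finally, combining the last three displays gives
\begin{align*}
\Bigl|\sum_{x\le X}e\bigl(f(x)\bigr)\Bigr|^{2^{d-1}}\ll X^{2^{d-1}+\varepsilon}\Bigl(\frac{1}{q}+\frac{1}{X}+\frac{q}{X^d}\Bigr),
\end{align*}
and extracting the $2^{d-1}$-th root produces \eqref{weyl}. No step poses a genuine obstacle; the only point requiring care is verifying that the lower-order coefficients $\alpha_j$ truly drop out of the leading coefficient in $x$ after differencing, which is the reason the bound is uniform in $\alpha_1,\ldots,\alpha_{d-1}$.
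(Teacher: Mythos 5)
The paper states this lemma as a classical fact (it is precisely Lemma 2.4 of Vaughan \cite{V}, Weyl's inequality for a polynomial of degree $d$ with arbitrary lower-order coefficients) and supplies no proof of its own, so there is no in-paper argument to compare against. Your reconstruction is the standard differencing proof and is correct: the observation that $\Delta_{d-1}\bigl(f(x);x;\mathbf h\bigr)=d!\,\alpha\,h_1\cdots h_{d-1}\,x+C(\mathbf h)$, with $C$ absorbing all contributions of the $\alpha_j x^j$ for $j\le d-1$, is exactly why the bound is uniform in $\alpha_1,\dots,\alpha_{d-1}$; the iterated Cauchy--Schwarz step yielding the $(2X)^{2^{d-1}-d}\sum_{\mathbf h}\min(X,\|d!\alpha h_1\cdots h_{d-1}\|^{-1})$ bound, the divisor-function reindexing to a one-dimensional sum, and the appeal to Vaughan's Lemma 2.2 all go through as you describe. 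Two minor points worth spelling out in a full write-up: first, the $h_i$ may vanish, and the $O(X^{d-2})$ tuples with some $h_i=0$ contribute $O(X^{d-1})$ to the sum, which is subsumed in the $X^{d-1}$ term of your penultimate display; second, since $\|\cdot\|$ is even you may restrict to $y>0$ when reindexing by $y=h_1\cdots h_{d-1}$. Neither affects the conclusion.
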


 Next we replace $X^{\varepsilon}$ by
$(\log X) q^{\varepsilon}$ in \eqref{weyl} when $q$ is small.
\begin{lemma}\label{lemmaweyl2}Let $\alpha\in \R$, $a\in \Z$ and $q\in \N$. Assume $X\ge 2$. Suppose that $|\alpha-a/q|\le \frac{1}{2qX^{d-1}}$, $(a,q)=1$ and $q\le X$. Then we have
\begin{align}\label{weylmajor}\sum_{1\le x\le X}e\Big(\alpha x^d+ \sum_{j=1}^{d-1}\alpha_jx^j\Big) \ll \frac{X(\log X)q^\varepsilon}{ \big(q+X^d|q\alpha-a|\big)^{2^{1-d}}}.\end{align}
\end{lemma}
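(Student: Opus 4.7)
The plan is to perform $d-1$ Weyl differences on $\Upsilon(\alpha)$ and then sharpen the resulting exponential sum by exploiting the major-arc hypothesis on $\alpha$.

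First I would apply the differencing lemma (Lemma 2.3 of Vaughan \cite{V}) via Cauchy's inequality $d-1$ times. Since the operator $\Delta_{d-1}$ annihilates any polynomial in $x$ of degree $<d-1$ and sends $x^d$ to a linear polynomial $d!\, h_1\cdots h_{d-1}\, x + C(h_1,\ldots,h_{d-1})$, the lower-order coefficients $\alpha_j$ of $f$ disappear (except through constants in $x$, which factor out of the inner sum). Summing the resulting linear exponential in $x$ yields
\begin{align*}
|\Upsilon(\alpha)|^{2^{d-1}} \ll X^{2^{d-1}-1} + X^{2^{d-1}-d} \sum_{|h_1|,\ldots,|h_{d-1}|\le X} \min\bigl(X,\, \|d!\,\alpha\, h_1\cdots h_{d-1}\|^{-1}\bigr).
\end{align*}

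Second, grouping the multi-sum by the value of $m = d!\, h_1\cdots h_{d-1}$ introduces a weighted divisor function bounded by $\tau_{d-1}(m)$, and so reduces matters to bounding
\begin{align*}
S := \sum_{1 \le m \le d!\, X^{d-1}} \tau_{d-1}(m) \min\bigl(X,\, \|\alpha m\|^{-1}\bigr).
\end{align*}
To estimate $S$, I would split the range of $m$ into arithmetic progressions modulo $q$. Writing $m = kq + r$ with $0 \le r < q$ and setting $\beta = \alpha - a/q$, the hypotheses $|q\alpha - a| \le 1/(2X^{d-1})$ and $q \le X$ ensure that $|\beta m| \le d!/(2q)$ uniformly for $m \le d!\,X^{d-1}$, so that $\|\alpha m\|$ is faithfully described by $\|ar/q + \beta m\|$. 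Applying Lemma 2.2 of Vaughan \cite{V} to sum $\min(X,\|\alpha m\|^{-1})$ along each progression, and pairing this against the divisor-moment bound $\sum_{m \le Y}\tau_{d-1}(m)^2 \ll Y (\log Y)^{(d-1)^2-1}$ through a Cauchy--Schwarz step, one expects
\begin{align*}
S \ll \frac{X^d (\log X)^{2^{d-1}} q^\varepsilon}{q + X^d|q\alpha - a|}.
\end{align*}
Substituting this into the differencing bound and extracting a $2^{d-1}$-th root produces the estimate claimed in the lemma.

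The main obstacle is the last step: upgrading the naive divisor bound $\tau_{d-1}(m) \ll_\varepsilon m^\varepsilon \ll X^\varepsilon$ to a factor of $q^\varepsilon$. This is precisely the gain of Lemma~\ref{lemmaweyl2} over Lemma~\ref{lemmaweyl}. The mechanism is that only those residue classes $r\pmod q$ making $\|ar/q\|$ small contribute non-trivially, and on this thin set of size $\asymp Y/q$ the divisor function can be averaged with an $\varepsilon$-loss tied to $q$ rather than to the full length $X^{d-1}$. The assumption $q \le X$ is used to keep the perturbation $\beta m$ below $1/2$ across the entire range, so that the arithmetic-progression decomposition captures $\|\alpha m\|$ accurately; the assumption $|q\alpha - a| \le 1/(2qX^{d-1})$ then determines the saturation point at which the $|q\alpha - a|$ term enters the denominator.
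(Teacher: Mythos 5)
Your opening step --- Weyl differencing $d-1$ times to arrive at a sum of the shape $\sum_{h_1,\ldots,h_{d-1}\le X}\min\bigl(X,\|d!\,\alpha\,h_1\cdots h_{d-1}\|^{-1}\bigr)$ --- agrees with the paper. But the subsequent strategy of collapsing the $h$-variables to a single variable $m=d!\,h_1\cdots h_{d-1}$ with a $\tau_{d-1}(m)$ weight, and then controlling the resulting sum by Cauchy--Schwarz against the second divisor moment, has a genuine gap: the companion factor $\sum_{m\le Y}\min\bigl(X,\|\alpha m\|^{-1}\bigr)^2$ is simply too large. Indeed, the $m$ with $q\nmid m$ alone contribute $\gg Yq$ to this square sum (each of the $\asymp Y/q$ values of $m$ in a residue class $am\equiv s\ (\mathrm{mod}\ q)$ contributes about $(q/|s|)^2$, and summing over $s$ gives $\gg q^2\cdot Y/q$), so Cauchy--Schwarz would give $S\ll Y\,q^{1/2}(\log X)^{O(1)}\approx X^{d-1}q^{1/2}(\log X)^{O(1)}$, which is weaker than the required $X^d q^\varepsilon/(q+X^d|q\alpha-a|)$ as soon as $q\gg X^{2/3}$ (and there is a similar failure for $d=2$ even from the $q\mid m$ terms). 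Making the $m$-grouping work would instead require a uniform bound for $\tau_{d-1}$ in arithmetic progressions modulo $q$ with only a $q^\varepsilon$ loss; you gesture at this ("on this thin set ... the divisor function can be averaged with an $\varepsilon$-loss tied to $q$"), but that is a separate nontrivial input you have not supplied, and it is precisely the point at which the naive $X^\varepsilon$ creeps back in.

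The paper sidesteps all of this by \emph{not} grouping. It keeps $\Theta=\sum_{h_1,\ldots,h_{d-1}\le X}\min(X,\|h_1\cdots h_{d-1}\alpha\|^{-1})$ with the $h_i$ as separate variables, and applies Vaughan's Lemma 2.2 (resp.\ Lemma 2.1) to the innermost sum over $h_{d-1}$ with the other $h_i$ held fixed. This produces a factor $(q,h_1\cdots h_{d-2})$ whose average over the remaining $h$'s is controlled by the elementary bound $\sum_{h\le X}(q,h)\ll Xq^\varepsilon$, which is the source of the $q^\varepsilon$ (as opposed to $X^\varepsilon$) in the lemma. For the second, $|q\alpha-a|$-sensitive bound, the paper splits on whether $q\mid h_1\cdots h_{d-1}$: the divisible case yields $\ll (\log X)^{d-1}q^\varepsilon/|q\alpha-a|$ via harmonic sums, and the non-divisible case yields $\ll X^{d-1}(\log X)q^\varepsilon$ via the same $(q,h)$ averaging. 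Combining the two bounds recovers $\Theta\ll X^d(\log X)^{d-1}q^\varepsilon/(q+X^d|q\alpha-a|)$, which after substitution and a $2^{d-1}$-th root gives the stated estimate. You should replace your Cauchy--Schwarz/progression step with this iterated-Vaughan argument (or else prove the required uniform divisor bound in progressions, which would be substantially more work).
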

\begin{proof}By the difference  argument (see Lemma 2.3 in \cite{V}), we have
\begin{align}\label{boundtheta0}\Big|\sum_{1\le x\le X}e\big(\alpha x^d+ \sum_{j=1}^{d-1}\alpha_jx^j\big)\Big|^{2^{d-1}}\ll X^{2^{d-1}-1}+X^{2^{d-1}-d}\Theta,\end{align}
where
\begin{align*}\Theta=\sum_{1\le h_1,\ldots,h_{d-1}\le X}
\min(X,\ \|h_1\cdots h_{d-1}\alpha\|^{-1} ).\end{align*}
Note that
$$|h_1\cdots h_{d-2}\alpha-h_1\cdots h_{d-2}\frac{a}{q}|\le \frac{1}{2qX}.$$
By Lemma 2.2 in \cite{V}, we have
\begin{align*}\sum_{1\le h_{d-1}\le X}
\min(X,\ \|h_1\cdots h_{d-1}\alpha\|^{-1} )\ll X^2(\log X)q^{-1}(q,h_1\cdots h_{d-2}),\end{align*}
and therefore,
\begin{align*}\Theta\ll X^2(\log X)q^{-1}\sum_{\substack{1\le h_1,\ldots,h_{d-2}\le X}}(q,h_1\cdots h_{d-2})
.\end{align*}
On applying the elementary inequality
$$\sum_{h\le X}(q,h)\ll Xq^\varepsilon,$$
one can further obtain
\begin{align}\label{thetafirstbound}\Theta\ll X^d(\log X)q^{-1+\varepsilon}
.\end{align}

Note that
\begin{align}\label{over2q}|h_1\cdots h_{d-1}\alpha-h_1\cdots h_{d-1}\frac{a}{q}|\le \frac{1}{2q},\end{align}
and we conclude
\begin{align}\label{theta12}\Theta\le \Theta_1+\Theta_2,\end{align}
where
\begin{align*}\Theta_1= \sum_{\substack{1\le h_1,\ldots,h_{d-1}\le X
\\ q|h_1\cdots h_{d-1}}}\min(X,\ \|h_1\cdots h_{d-1}(\alpha-\frac{a}{q})\|^{-1} )\end{align*}
and \begin{align*}\Theta_2=
\sum_{\substack{1\le h_1,\ldots,h_{d-1}\le X
\\ q\nmid h_1\cdots h_{d-1}}}\min(2q,\ \|h_1\cdots h_{d-1}\alpha\|^{-1} ).\end{align*}

By \eqref{over2q}, one has\begin{align}\label{boundtheta1}\Theta_1 \le \sum_{\substack{h_1,\ldots,h_{d-1}
\\ q|h_1\cdots h_{d-1}}}\frac{1}{h_1\cdots h_{d-1}|\alpha-\frac{a}{q}|} \ll \frac{(\log X)^{d-1}q^\varepsilon}{|q\alpha-a|}.\end{align}
To obtain the upper bound for $\Theta_2$, we conclude from Lemma 2.1 in \cite{V} that
\begin{align*}
\sum_{\substack{1\le h_{d-1}\le X}}\min(2q,\ \|h_1\cdots h_{d-1}\alpha\|^{-1} )\ll X(\log X)(q,h_1\cdots h_{d-2}),\end{align*}
and then it follows that
\begin{align}\label{boundtheta2}\Theta_2\ll
\sum_{\substack{1\le h_1,\ldots,h_{d-2}\le X}}X(\log X) (q,h_1\cdots h_{d-2})\ll X^{d-1}(\log X)q^\varepsilon. \end{align}
Inserting \eqref{boundtheta1} and \eqref{boundtheta2} into \eqref{theta12}, we obtain
\begin{align}\label{thetasecondbound}\Theta\ll \frac{(\log X)^{d-1}q^\varepsilon}{|q\alpha-a|}+X^{d-1}(\log X)q^{\varepsilon}\ll \frac{(\log X)^{d-1}q^\varepsilon}{|q\alpha-a|}
.\end{align}

We conclude from \eqref{thetafirstbound} and \eqref{thetasecondbound} that
\begin{align}\label{thetabound}\Theta\ll \frac{X^{d}(\log X)^{d-1}q^\varepsilon}{q+X^d|q\alpha-a|}
.\end{align}
Now \eqref{weylmajor} follows from \eqref{boundtheta0} and \eqref{thetabound}.
\end{proof}

\begin{lemma}\label{lemmaweyl3}Let $\alpha\in \R$, $a\in \Z$ and $q\in \N$. Assume $X\ge 2$. Suppose that $|\alpha-a/q|\le \frac{1}{2qX^{d-1}}$, $(a,q)=1$ and $q\le 2X^{d-1}$. Then we have
\begin{align}\label{weylall}\sum_{1\le x\le X}e\Big(\alpha x^d+ \sum_{j=1}^{d-1}\alpha_jx^j\Big) \ll \frac{X(\log X)q^\varepsilon}{ \big(q+X^d|q\alpha-a|\big)^{2^{1-d}}}+X^{1-2^{1-d}+\varepsilon}.\end{align}
\end{lemma}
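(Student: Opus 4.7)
The plan is to split into two cases according to the size of $q$, since the hypothesis $q\le 2X^{d-1}$ of Lemma \ref{lemmaweyl3} is strictly weaker than the hypothesis $q\le X$ of Lemma \ref{lemmaweyl2}, and it is precisely in the extra range $X<q\le 2X^{d-1}$ where the extra term $X^{1-2^{1-d}+\varepsilon}$ appears.

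In the range $1\le q\le X$, all the hypotheses of Lemma \ref{lemmaweyl2} are in force, so that result applies verbatim and yields the first term on the right of \eqref{weylall}. Nothing further is needed in this case.

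In the complementary range $X<q\le 2X^{d-1}$, I would invoke the classical Weyl inequality (Lemma \ref{lemmaweyl}). To verify its hypothesis $|\alpha-a/q|\le q^{-2}$, I observe that $q\le 2X^{d-1}$ is equivalent to $q^2\le 2qX^{d-1}$, so the assumption $|\alpha-a/q|\le \frac{1}{2qX^{d-1}}$ gives $|\alpha-a/q|\le q^{-2}$ as required. Weyl's inequality then produces
$$\sum_{1\le x\le X}e\Big(\alpha x^d+\sum_{j=1}^{d-1}\alpha_jx^j\Big)\ll X^{1+\varepsilon}\Big(\frac{1}{X}+\frac{1}{q}+\frac{q}{X^d}\Big)^{2^{1-d}}.$$
Since $q>X$ one has $1/q<1/X$, and since $q\le 2X^{d-1}$ one has $q/X^d\le 2/X$; consequently each of the three fractions inside the parenthesis is $\ll 1/X$, and the whole expression is $\ll X^{1-2^{1-d}+\varepsilon}$, which is exactly the second term of \eqref{weylall}. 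Combining the two cases gives the lemma.

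I do not anticipate any serious obstacle: this lemma is essentially a housekeeping device that unifies Lemma \ref{lemmaweyl2} (which gives a sharper logarithmic saving, but only for $q\le X$) with Weyl's inequality (which works in the larger range $q\le 2X^{d-1}$ at the cost of an unavoidable $X^{1-2^{1-d}+\varepsilon}$ loss). The only point requiring a moment of care is the elementary verification that the Diophantine hypothesis $|\alpha-a/q|\le\frac{1}{2qX^{d-1}}$ upgrades to $|\alpha-a/q|\le q^{-2}$ throughout the range $q\le 2X^{d-1}$, which is exactly where the constant $2$ in the bound on $q$ is needed.
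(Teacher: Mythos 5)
Your proposal is correct and follows essentially the same two-case split the paper uses (the paper's one-line proof invokes Lemma \ref{lemmaweyl2} for $q<X$ and Lemma \ref{lemmaweyl} for $q\ge X$, which is the same decomposition up to the boundary point). Your explicit verifications that the Diophantine hypothesis upgrades to $|\alpha-a/q|\le q^{-2}$ and that each term $1/X$, $1/q$, $q/X^d$ is $\ll 1/X$ in the large-$q$ regime are exactly the computations the paper leaves implicit.
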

\begin{proof}When $q\ge X$, \eqref{weylall} follows from Lemma \ref{lemmaweyl}. When $q<X$, \eqref{weylall}  follows from Lemma \ref{lemmaweyl2}.\end{proof}

\begin{lemma}\label{lemmadivisor}Let $d\ge 2$ be a natural number. Let $0<\Delta< \frac{1}{d-1}$. Then one has
\begin{align}\label{boundy12}\sum_{1\le y_1<y_2\le Y}\frac{(y_2^d-y_1^d,q)^{\Delta}}{(y_2-y_1)^{\Delta}}\ll Y^{2-\Delta} q^{\varepsilon}
.\end{align}\end{lemma}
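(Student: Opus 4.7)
The plan is to factor $y_2^d - y_1^d$ using the gap $h = y_2 - y_1$ and reduce to a polynomial divisor sum. Writing
\[R_h(y) := \frac{(y+h)^d - y^d}{h} = \sum_{k=0}^{d-1}\binom{d}{k+1} h^{k} y^{d-1-k},\]
we see that $R_h$ is a polynomial in $y$ of degree $d-1$ with leading coefficient $d$, and $y_2^d - y_1^d = h\,R_h(y_1)$. The elementary inequality $(ab, q) \le (a, q)(b, q)$ then yields
\[\frac{(y_2^d - y_1^d, q)^\Delta}{(y_2-y_1)^\Delta} \le \frac{(h, q)^\Delta}{h^\Delta}\bigl(R_h(y_1), q\bigr)^\Delta,\]
so the sum in \eqref{boundy12} is majorised by $\sum_{h \le Y-1} h^{-\Delta}(h,q)^\Delta \sum_{y_1 \le Y-h}(R_h(y_1), q)^\Delta$.

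For the inner sum I would expand $(R_h(y_1), q)^\Delta = \sum_{m \mid (R_h(y_1), q)} c_\Delta(m)$, where $c_\Delta$ is the multiplicative function determined by $n^\Delta = \sum_{m \mid n} c_\Delta(m)$, so $|c_\Delta(m)| \le m^\Delta$. Switching the order of summation gives
\[\sum_{y_1 \le Y}(R_h(y_1), q)^\Delta = \sum_{m \mid q} c_\Delta(m)\,\#\{y_1 \le Y : m \mid R_h(y_1)\}.\]
Since $R_h$ has fixed degree $d-1$ and fixed leading coefficient $d$, a standard root-count estimate gives $\#\{y \bmod m : R_h(y) \equiv 0\} \ll m^{1 - 1/(d-1)+\varepsilon}$ uniformly in $h$. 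Bounding $\#\{y_1 \le Y : m \mid R_h(y_1)\} \le (Y/m + 1)\cdot m^{1 - 1/(d-1) + \varepsilon}$, splitting the $m$-sum at $m = Y$, and invoking the hypothesis $\Delta < 1/(d-1)$, a routine calculation yields $\sum_{y_1 \le Y}(R_h(y_1), q)^\Delta \ll Y q^\varepsilon + q^{1 + \Delta - 1/(d-1) + \varepsilon}$.

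Combining this with the elementary estimate $\sum_{h \le Y} h^{-\Delta}(h, q)^\Delta \ll Y^{1-\Delta} q^\varepsilon$ (valid for $0 < \Delta < 1$, obtained by grouping $h$ according to $g = (h, q)$ and using $\sum_{h' \le N}(h')^{-\Delta} \ll N^{1-\Delta}$) produces a bound of the shape $Y^{2-\Delta} q^\varepsilon + Y^{1-\Delta} q^{1 + \Delta - 1/(d-1) + \varepsilon}$; the residual is absorbed into the main term once $Y$ is moderately large relative to $q$, while for very small $Y$ the claim follows from a direct trivial bound together with the fact that $q^\varepsilon$ then dominates. The main obstacle is the uniform-in-$h$ root-count bound $\#\{y \bmod m : R_h(y) \equiv 0\} \ll m^{1-1/(d-1)+\varepsilon}$: I would prove it by reducing to prime powers via the Chinese Remainder Theorem and exploiting the substitution $y = hu$, which gives $R_h(hu) = h^{d-1}\bigl((u+1)^d - u^d\bigr)$. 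For primes $p \nmid dh$ this identifies the root count of $R_h$ mod $p$ with that of the fixed polynomial $(u+1)^d - u^d$, whose discriminant is a nonzero constant depending only on $d$; a Hensel-type argument then controls the prime-power contributions, while primes dividing $d$ or $h$ contribute only bounded extra factors that are absorbed into $q^\varepsilon$.
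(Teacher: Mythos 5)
There is a genuine gap, and it traces to the step you omitted: the paper \emph{first} factors out $x=\gcd(y_1,y_2)$, writing $y_1=xz_1$, $y_2=xz_2$ with $(z_1,z_2)=1$, and only then introduces the difference quotient $w_t(z_1)$ with $t=z_2-z_1$. The point of that extraction is that it furnishes the coprimality $(z_1,t)=1$ in the inner $z$-sum, and under that coprimality the root count is
\[
\#\{z \bmod s:\ (z,t)=1,\ s\mid w_t(z)\}\ \ll\ s^{\varepsilon},
\]
not merely $s^{1-1/(d-1)+\varepsilon}$. Indeed for a prime $p\mid t$ the congruence $w_t(z)\equiv dz^{d-1}\ (\mathrm{mod}\ p)$ has no solutions with $p\nmid z$ (when $p\nmid d$), while for $p\nmid t$ the substitution $z=tu$ reduces to the \emph{fixed} polynomial $(1+u)^d-u^d$ whose discriminant is a nonzero constant. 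In your version, without the coprimality, the degenerate residues $p\mid y$ with $p\mid h$ really do contribute: for such $p$ one has $v_p\bigl(R_h(y)\bigr)\ge d-1$ on the full residue class $y\equiv 0\ (\mathrm{mod}\ p)$, so the root count of $R_h$ modulo $m$ genuinely attains the order $m^{1-1/(d-1)}$; your bound is essentially sharp, but it is weaker than what the paper uses.

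That weaker root count is what makes your tail unmanageable for $d\ge 3$. After switching the order with $c_\Delta$ and restricting to $m\mid q$ with $m\ll Y^{d-1}$ (since $R_h(y_1)\ll Y^{d-1}$), your inner estimate is
\[
\sum_{y_1\le Y}(R_h(y_1),q)^\Delta\ \ll\ Yq^\varepsilon\ +\ \sum_{\substack{m\mid q\\ Y<m\ll Y^{d-1}}} m^{\Delta}\,m^{1-\frac{1}{d-1}+\varepsilon}\ \ll\ Yq^\varepsilon\ +\ Y^{\,d-2+(d-1)\Delta+\varepsilon}q^\varepsilon,
\]
and after multiplying by $\sum_{h\le Y}(h,q)^\Delta h^{-\Delta}\ll Y^{1-\Delta}q^\varepsilon$ the secondary term is $Y^{\,d-1+(d-2)\Delta+\varepsilon}q^\varepsilon$. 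The inequality $d-1+(d-2)\Delta\le 2-\Delta$ is equivalent to $(d-1)\Delta\le 3-d$, which holds for $d=2$ but fails for every $d\ge 3$ and every admissible $\Delta>0$. Your fallback for small $Y$ does not save this: the trivial bound $y_2^d-y_1^d\le dY^{d-1}(y_2-y_1)$ gives only $\ll Y^{2+(d-1)\Delta}$, which is $\ll Y^{2-\Delta}q^\varepsilon$ only when $Y^{d\Delta}\ll q^\varepsilon$, leaving a wide middle range of $Y$ uncovered. By contrast, with the $s^\varepsilon$ root count the paper's tail is $\sum_{m\mid q,\,m\ll Y^{d-1}}m^{\Delta+\varepsilon}\ll Y^{(d-1)\Delta+\varepsilon}q^\varepsilon\ll Yq^\varepsilon$, using only $\Delta<1/(d-1)$, and the argument closes. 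So the missing idea is precisely the gcd-extraction and the resulting coprimality; without it the divisor sum cannot be controlled for $d\ge 3$.
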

\begin{proof}
On introducing $x=(y_1,y_2)$ and changing variables, we obtain
\begin{align}\label{boundy12-1}\sum_{1\le y_1<y_2\le Y}\frac{(y_2^d-y_1^d,q)^{\Delta}}{(y_2-y_1)^{\Delta}}\le \sum_{x\le Y}\frac{(x^d,q)^{\Delta}}{x^{\Delta}}\sum_{\substack{1\le z_1<z_2\le Y/x
\\ (z_1,z_2)=1}}\frac{(z_2^d-z_1^d,q)^{\Delta}}{(z_2-z_1)^{\Delta}}.\end{align}
Write $t=z_2-z_1$ and $w_t(z)=\frac{(z+t)^d-z^d}{t}$. Then we have
\begin{align*}\sum_{\substack{1\le z_1<z_2\le Y/x
\\ (z_1,z_2)=1}}\frac{(z_2^d-z_1^d,q)^{\Delta}}{(z_2-z_1)^{\Delta}}\le \sum_{t\le Y/x}\frac{(t,q)^{\Delta}}{t^{\Delta}}\sum_{\substack{1\le z\le Y/x
\\ (z,t)=1}}(w_t(z),q)^{\Delta}.\end{align*}
On introducing $s=(w_t(z),q)$, we further obtian
\begin{align}\label{outerinner}\sum_{\substack{1\le z_1<z_2\le Y/x
\\ (z_1,z_2)=1}}\frac{(z_2^d-z_1^d,q)^{\Delta}}{(z_2-z_1)^{\Delta}}\le
\sum_{\substack{t\le Y/x}}\frac{(t,q)^{\Delta}}{t^{\Delta}}\sum_{\substack{s|q
\\ s\le d(Y/x)^{d-1}}}s^{\Delta}
\sum_{\substack{1\le z\le Y/x
\\ (z,t)=1\\ s|w_t(z)}}1.\end{align}

One has
\begin{align*}
\sum_{\substack{1\le z\le Y/x
\\ (z,t)=1\\ s|w_t(z)}}1\ll \Big(\frac{Y}{xs}+1\Big)s^\varepsilon,\end{align*}
and it follows that
\begin{align}\label{innersy}\sum_{\substack{s|q
\\ s\le d(Y/x)^{d-1}}}s^{\Delta}
\sum_{\substack{1\le z\le Y/x
\\ (z,t)=1\\ s|w_t(z)}}1 \ll \frac{Y}{x}q^\varepsilon+\big(\frac{Y}{x}\big)^{(d-1)\Delta}q^{\varepsilon}
\ll \frac{Y}{x}q^\varepsilon.\end{align}

Inserting \eqref{innersy} into \eqref{outerinner}, we obtain
\begin{align*}\sum_{\substack{1\le z_1<z_2\le Y/x
\\ (z_1,z_2)=1}}\frac{(z_2^d-z_1^d,q)^{\Delta}}{(z_2-z_1)^{\Delta}}\ll \big(\frac{Y}{x}\big)^{2-\Delta}q^\varepsilon.\end{align*}
and by \eqref{boundy12-1},
\begin{align}\label{boundy12-2}\sum_{1\le y_1<y_2\le Y}\frac{(y_2^d-y_1^d,q)^{\Delta}}{(y_2-y_1)^{\Delta}}\ll
Y^{2-\Delta}q^\varepsilon\sum_{x\le Y}\frac{(x^d,q)^{\Delta}}{x^{2}}.\end{align}

On noting that $(x^d,q)^\Delta\le (x,q)^{d\Delta}\le (x,q)x^{\Delta}$, we deduce that
\begin{align}\label{outerx}\sum_{x\le Y}\frac{(x^d,q)^{\Delta}}{x^{2}}\ll q^\varepsilon.\end{align}
Now \eqref{boundy12} follows from \eqref{boundy12-2} and \eqref{outerx}.\end{proof}

Let $\tau(\cdot)$ denote the divisor function. By H\"older's inequality, we have
\begin{align*}&\sum_{1\le y_1<y_2\le Y}\tau(y_1)\tau(y_2)\frac{(y_2^d-y_2^d,q)^{2^{1-d}}}{(y_2-y_1)^{2^{1-d}}}
\\  &\ \  \ll   \Big(Y^2(\log Y)^{14}\Big)^{1/3}\Big(\sum_{1\le y_1<y_2\le Y}\frac{(y_2^d-y_2^d,q)^{3\cdot 2^{-d}}}{(y_2-y_1)^{3\cdot 2^{-d}}}\Big)^{2/3}.\end{align*}
Note that $3\cdot 2^{-d}< \frac{1}{d-1}$. We deduce from Lemma \ref{lemmadivisor} that
\begin{align}\label{bounddivisor1}\sum_{1\le y_1<y_2\le Y}\tau(y_1)\tau(y_2)\frac{(y_2^d-y_2^d,q)^{2^{1-d}}}{(y_2-y_1)^{2^{1-d}}}
 \ll   Y^{2-2^{1-d}}(\log Y)^{5}
q^{\varepsilon}.\end{align}

Suppose that $\xi(x)$ and $\eta(y)$ are two sequences of real numbers satisfying
$$|\xi(x)|\le \tau(x) \ \textrm{ and }\ |\eta(y)|\le \tau(y).$$
 We freely use the elementary inequality
\begin{align*}\sum_{ x\le X}\tau(x)^{r}\ll X (\log X)^{2^r-1},\end{align*}
where $r\in \Z^{+}$ is a fixed positive integer.
Recall $f(x)$ in \eqref{notationf} and we introduce
\begin{align*}\Psi(\alpha)=\sum_{x\sim X}\xi(x)\sum_{\substack{y\sim Y \\ y\le P/x}}\eta(y) e\big(f(xy)\big),\end{align*}
where $w\sim W$ means $\frac{W}{2}<w\le W$.
\begin{lemma}\label{lemmatype2}Suppose that $Y\le X$, $P\ll XY\ll P$ and $B\le X^{1/2}$. Let $\alpha\in \mathfrak{r}(B)$. Then we have
\begin{align*}\Psi(\alpha)\ll XY(\log X)^{5} B^{-2^{-d}+\varepsilon}+XY^{1/2}(\log X)^{3}.\end{align*}
\end{lemma}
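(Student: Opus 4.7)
The proof follows the classical Type II (bilinear) exponential sum strategy.

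\textbf{Cauchy-Schwarz and diagonal.} First apply Cauchy-Schwarz to the outer sum over $x \sim X$, using $|\xi(x)| \le \tau(x)$ and $\sum_{x \sim X}\tau(x)^2 \ll X(\log X)^3$, to obtain
\[
|\Psi(\alpha)|^2 \ll X(\log X)^3 \sum_{x \sim X}\Big|\sum_{y \sim Y,\, xy \le P}\eta(y) e\big(f(xy)\big)\Big|^2.
\]
Expanding the square and interchanging summation gives $\sum_{y_1, y_2 \sim Y}\eta(y_1)\overline{\eta(y_2)}\,S(y_1, y_2)$, where $S(y_1, y_2) = \sum_{x}e(f(xy_1)-f(xy_2))$ is summed over an appropriate subinterval of $(X/2, X]$. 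As a polynomial in $x$, $f(xy_1) - f(xy_2)$ has degree at most $d$ with leading coefficient $\alpha(y_1^d - y_2^d)$. The diagonal terms $y_1 = y_2$ contribute $X\sum_y|\eta(y)|^2 \ll XY(\log Y)^3$; combining with $X(\log X)^3$ and taking a square root yields the second term $XY^{1/2}(\log X)^3$ of the target.

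\textbf{Weyl off-diagonal.} For each pair $y_1 \ne y_2$, set $r = y_1^d - y_2^d$ and extract by Dirichlet $(a_r, q_r)$ coprime with $q_r \le 2X^{d-1}$ and $|\alpha r - a_r/q_r| \le 1/(2q_r X^{d-1})$. Lemma \ref{lemmaweyl3} then gives
\[
|S(y_1, y_2)| \ll \frac{X(\log X) q_r^\varepsilon}{R_r^{2^{1-d}}} + X^{1-2^{1-d}+\varepsilon}, \qquad R_r := q_r + X^d|q_r\alpha r - a_r|.
\]
The additive error contributes $\ll Y^2 X^{1-2^{1-d}+\varepsilon}$ to the off-diagonal sum; after combining with $X(\log X)^3$ and taking a square root this becomes $\ll XY\cdot X^{-2^{-d}+\varepsilon} \ll XY\, B^{-2^{-d}+\varepsilon}$ since $B \le X^{1/2}$.

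\textbf{Main term via the Dirichlet approximation of $\alpha$.} For the main Weyl term, use $\alpha \in \mathfrak{r}(B)$: applying Dirichlet with $Q = P^d/B$ yields $(a^*, q^*)$ coprime with $q^* \le P^d/B$ and $|q^*\alpha - a^*| \le B/P^d$, and the hypothesis $\alpha \notin \mathfrak{R}(B)$ forces $q^* > B$. This induces a natural approximation of $\alpha r$: the reduced fraction $a^*r/(q^*, r)$ over $q^*/(q^*, r)$, satisfying
\[
\Big|\tfrac{q^*}{(q^*, r)}\alpha r - \tfrac{a^* r}{(q^*, r)}\Big| \le \frac{rB}{(q^*, r) P^d} \le \frac{B}{(q^*, r) X^d}
\]
(using $r \le Y^d$ and $XY \ll P$). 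A short case analysis (depending on whether $q^*/(q^*, r) \le 2X^{d-1}$) shows that $R_r \gg q^*/(q^*, r)$. Combining this with the other lower bound $R_r \ge X^d|q_r\alpha r - a_r|$ via AM-GM and using $|r| \asymp |y_1 - y_2|Y^{d-1}$ produces an estimate of the form
\[
R_r^{-2^{1-d}} \ll \frac{(q^*, y_1^d - y_2^d)^{2^{1-d}}}{q^{*2^{1-d}}\,|y_2 - y_1|^{2^{1-d}}} \cdot (\text{mild factors of } Y, B).
\]
Summing over $(y_1, y_2)$ with $y_1 \ne y_2$ and invoking the divisor estimate \eqref{bounddivisor1} with $q = q^*$ gives $\ll q^{*-2^{1-d}+\varepsilon} Y^{2}(\log Y)^5$; substituting $q^* > B$ and taking the square root delivers the main term $XY(\log X)^5 B^{-2^{-d}+\varepsilon}$.

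\textbf{Main obstacle.} The chief difficulty is producing the factor $|y_2 - y_1|^{2^{1-d}}$ in the denominator of $R_r^{-2^{1-d}}$ to align with \eqref{bounddivisor1}. Using only the naive bound $R_r \ge q_r$ yields $\sum \tau(y_1)\tau(y_2)(q^*, y_1^d - y_2^d)^{2^{1-d}}$ without the $(y_2-y_1)^{2^{1-d}}$ saving, which is insufficient. The remedy is to track both lower bounds $R_r \ge q_r$ and $R_r \ge X^d|q_r\alpha r - a_r|$ simultaneously via AM-GM, with the latter, scaling as $\asymp X^d q_r |r|B/(q^*(q^*,r))$, contributing the missing $|r|^{1/2} \asymp (|y_1 - y_2|Y^{d-1})^{1/2}$. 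A secondary technicality is the sub-case $q^*/(q^*, r) > 2X^{d-1}$, where the natural approximation fails the hypotheses of Lemma \ref{lemmaweyl3} and must be compared to the Dirichlet approximation $(a_r, q_r)$ of $\alpha r$ directly.
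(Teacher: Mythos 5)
Your skeleton (Cauchy--Schwarz, splitting off the diagonal, invoking Lemma~\ref{lemmaweyl3} on each off-diagonal pair, and summing via Lemma~\ref{lemmadivisor}) matches the paper's, and the $XY^{1/2}(\log X)^3$ term and the additive-error term are handled correctly. The problems all sit in your ``Main obstacle'' paragraph, which is exactly where the real work of the lemma lives, and the remedies you propose there do not work.

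The AM--GM fix for the $(y_2-y_1)^{2^{1-d}}$ factor is circular. You want to use the lower bound $R_r\ge X^d|q_r\alpha r - a_r|$, asserting that the right-hand side ``scales as $\asymp X^d q_r |r| B/(q^*(q^*,r))$.'' But that quantity is an \emph{upper} bound on $X^d|q_r\alpha r - a_r|$, inherited from the Dirichlet approximation of $\alpha$; there is no lower bound at all (if $\alpha$ is rational with denominator $q^*$ the quantity is literally zero), so an AM--GM between $R_r\ge q_r$ and $R_r\ge X^d|q_r\alpha r - a_r|$ cannot manufacture the $|y_2-y_1|$ saving. The paper obtains it by a different and purely algebraic mechanism: once the reduced fraction approximating $tT\alpha$ (with $t=y_2-y_1$, $T=(y_2^d-y_1^d)/(y_2-y_1)$) is shown to coincide with the one induced by $(a,q)$, one has the exact identity
\begin{align*}
R_r \;=\; \frac{q + X^d tT|q\alpha - a|}{(tT,q)},
\end{align*}
and then the elementary inequality
\begin{align*}
\big(q + X^d tT|q\alpha - a|\big)^{-1} \;\ll\; \frac{Y}{y_2-y_1}\,\big(q + X^d Y^d|q\alpha - a|\big)^{-1},
\end{align*}
which is checked term by term using only $y_2-y_1\le Y$ and $T\gg Y^{d-1}$. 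No lower bound on $|q\alpha-a|$ is used anywhere; the $(y_2-y_1)^{-1}$ is simply factored out of the $tT$ factor inside $R_r$. This is the idea your proposal is missing.

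The second gap is the modulus in the Dirichlet step. Taking $Q=P^d/B$ and approximating $\alpha$ directly gives $q^*\le P^d/B\asymp X^dY^d/B$, so $q^*/(q^*,r)$ generically far exceeds $2X^{d-1}$. In that range the induced fraction does not satisfy the hypotheses of Lemma~\ref{lemmaweyl3}, and the comparison with $(a_r,q_r)$ no longer forces the two fractions to coincide, because the product of denominators times the combined approximation error is nowhere near $<1$. You call this a ``secondary technicality,'' but it is in fact the generic situation, and the proposal offers no way to resolve it. The paper avoids it by a carefully tuned \emph{chain} of three Dirichlet approximations --- modulus $2X^{d-1}$ for $tT\alpha$, then $W=X^{d/2}Y^{1/2}$ for $T\alpha$, then $V=X^{d/2}Y^{(d-1)/2}$ for $\alpha$ --- and at each link the hypotheses $Y\le X$ and $B\le X^{1/2}$ are precisely what make the difference of two fractions small enough to force them to agree. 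Only after the third link does one get $q_r = q/(tT,q)$ with $q\le V$, which is what makes both the identity above and the final lower bound $q + X^dY^d|q\alpha-a| \gg B$ (from $\alpha\in\mathfrak{r}(B)$ and $V\le P^d/B$, using $X^dY^d\asymp P^d$) available. Your single Dirichlet step with $Q=P^d/B$ has no substitute for this chain, so the main-term estimate does not go through as written.
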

\begin{proof}By Cauchy's inequality, one has
\begin{align*}\Psi(\alpha)^2 \ll X(\log X)^{3}\sum_{x\sim X}\Big|\sum_{\substack{y\sim Y \\ y\le P/x}}\eta(y) e\big(f(xy)\big)\Big|^2.\end{align*}
On expanding the square and interchanging the order of summations, we obtain
\begin{align*}\Psi(\alpha)^2 \ll X(\log X)^{3}
\sum_{\substack{y_1\sim Y }}\sum_{\substack{y_2\sim Y }}\eta(y_1)\eta(y_2)\sum_{x\in I}e\big(f(xy_2)-f(xy_1)\big),\end{align*}
where $I\subseteq [X/2, X]$ denotes an interval depending on $y_1$ and $y_2$. The contribution from $y_1=y_2$ is at most $O(X^2Y\log^6 X)$, and by symmetry we conclude that
\begin{align}\label{boundpsisquare}\Psi(\alpha)^2 \ll X^2Y(\log X)^{6}+X(\log X)^3\widetilde{\Psi},\end{align}
where\begin{align}\label{tildepsi}\widetilde{\Psi} =
\sum_{\substack{Y/2\le y_1<y_2\le Y }}\tau(y_1)\tau(y_2)\big|\Xi(\alpha)\big|\end{align}
and
\begin{align*}\Xi(\alpha):=\Xi(\alpha;y_1,y_2) =\sum_{x\in I}e\big(f(xy_2)-f(xy_1)\big).\end{align*}

Let
\begin{align}\label{definetandT}t=y_2-y_1 \ \textrm{ and } \ T=(y_2^d-y_1^d)(y_2-y_1)^{-1}.\end{align}
As a polynomial of $x$, the leading term in $f(xy_2)-f(xy_1)$ is
$\alpha(y_2^d-y_1^d)x^d=\alpha tTx^d$. Note that $1\le t\le Y$ and $d(Y/2)^{d-1}<T\le dY^{d-1}$.

By Dirichlet's approximation theorem, there exist $b\in \Z$ and $r\in \N$ such that
\begin{align*}(b,r)=1, \ r\le 2X^{d-1} \ \textrm{ and } \  |tT\alpha-\frac{b}{r}|\le \frac{1}{2rX^{d-1}}.\end{align*}
On applying Lemma \ref{lemmaweyl3}, we have
\begin{align}\label{innertype2}\Xi(\alpha)\ll X(\log X) r^{\varepsilon}\Big(r+X^d|rtT\alpha-b|\Big)^{-2^{1-d}}+X^{1-2^{1-d}+\varepsilon}.\end{align}
If $r>B$, then
\begin{align}\label{boundsumxI}\Xi(\alpha)\ll X(\log X)B^{-2^{1-d}+\varepsilon}.\end{align}
Then we obtain from \eqref{tildepsi} and \eqref{boundsumxI}
\begin{align}\label{boundpsi2}\widetilde{\Psi} \ll XY^2(\log X)^{6} B^{-2^{1-d}+\varepsilon}.\end{align}
Similarly, if $r\le B/3$ and $|tT\alpha-b/r|>\frac{B}{3rX^d}$, then \eqref{boundsumxI} and \eqref{boundpsi2} hold as well.
Therefore, we next assume
\begin{align}\label{brnew}r\le \frac{B}{3} \ \textrm{ and } \  |tT\alpha-\frac{b}{r}|\le \frac{B}{3rX^{d}}.\end{align}

Let $W=X^{d/2}Y^{1/2}$. By Dirichlet's approximation theorem, there exist $c\in \Z$ and $s\in \N$ such that
\begin{align}\label{existcs} (c,s)=1, \ s\le W \ \textrm{ and } \  |T\alpha-\frac{c}{s}|\le \frac{1}{sW}.\end{align}
We deduce from \eqref{brnew} and \eqref{existcs} that
\begin{align*}
\big|\frac{b}{r}-t\frac{c}{s}\big|\le &\big|tT\alpha-\frac{b}{r}\big|+\big|tT\alpha-t\frac{c}{s}\big|
\\ \le&
\frac{B}{3rX^{d}} +\frac{t}{sW}=\frac{1}{3rs}\Big(\frac{Bs}{X^{d}} +\frac{3tr}{W}\Big)
\\ \le &\frac{1}{3rs}\Big(\frac{BW}{X^{d}} +\frac{YB}{W}\Big)=
\frac{2BY^{1/2}}{3rsX^{d/2}}.\end{align*}
Note that for $d\ge 2$, $Y\le X$ and $B\le X^{1/2}$, we have $\frac{2BY^{1/2}}{3X^{d/2}}<1$. Therefore, we obtain
$\big|\frac{b}{r}-t\frac{c}{s}\big|<\frac{1}{rs}$, and this gives
\begin{align}\label{linkbrcs}\frac{b}{r}=t\frac{c}{s} \ \textrm{ and } \  r=\frac{s}{(t,s)}.\end{align}

It follows from \eqref{innertype2} and \eqref{linkbrcs} that
\begin{align*}\Xi(\alpha)\ll X(\log X) s^{\varepsilon}(t,s)^{2^{1-d}-\varepsilon}\Big(s+X^dt|sT\alpha-c|\Big)^{-2^{1-d}}+X^{1-2^{1-d}+\varepsilon}.\end{align*}
By \eqref{tildepsi}, we further obtain
\begin{align*}\widetilde{\Psi}\ll XY^2(\log X)^4 s^{\varepsilon}\Big(s+X^dt|sT\alpha-c|\Big)^{-2^{1-d}}+X^{1-2^{1-d}+\varepsilon}Y^2.\end{align*}
Similarly to the argument leading to \eqref{brnew}, one can obtain \eqref{boundpsi2}
unless
\begin{align}\label{csnew}s\le \frac{B}{3d} \ \textrm{ and } \  |T\alpha-\frac{c}{s}|\le \frac{B}{3sX^dt}.\end{align}

 From now on, we assume that \eqref{csnew} holds. Let $V=X^{d/2}Y^{(d-1)/2}$. By Dirichlet's approximation theorem, there exist
 $a\in \Z$ and $q\in \N$ such that
\begin{align}\label{existaq} (a,q)=1, \ q\le V \ \textrm{ and } \  |\alpha-\frac{a}{q}|\le \frac{1}{qV}.\end{align}
We deduce from \eqref{csnew} and \eqref{existaq} that
\begin{align*}|\frac{c}{s}-\frac{aT}{q}|\le & |T\alpha-\frac{c}{s}|+|T\alpha-\frac{aT}{q}|
\\ \le & \frac{B}{3sX^dt}+\frac{T}{qV}=\frac{1}{3sq}\Big(\frac{Bq}{X^dt}+\frac{3Ts}{V}\Big)
\\ \le & \frac{1}{3sq}\Big(\frac{BV}{X^d}+\frac{Y^{d-1}B}{V}\Big)= \frac{2BY^{(d-1)/2}}{3sqX^{d/2}}. \end{align*}
Since $Y\le X$ and $B\le X^{1/2}$, one has $BY^{(d-1)/2}< X^{d/2}$ and
$|\frac{c}{s}-\frac{aT}{q}|<\frac{1}{sq}$. In particular, we have $\frac{c}{s}=\frac{aT}{q}$. Then by \eqref{definetandT} and \eqref{linkbrcs}, we obtain
\begin{align}\label{linkbraq}\frac{b}{r}=tT\frac{a}{q} \ \textrm{ and } \  r=\frac{q}{(tT,q)}=\frac{q}{(y_2^d-y_1^d,q)}.\end{align}
We conclude from \eqref{innertype2} and \eqref{linkbraq} that
\begin{align*}\Xi(\alpha)\ll X(\log X) q^{\varepsilon}(y_2^d-y_1^d,q)^{2^{1-d}}
\Big(q+X^dtT|q\alpha-a|\Big)^{-2^{1-d}}+X^{1-2^{1-d}+\varepsilon}.\end{align*}
Note that
\begin{align*}
\Big(q+X^dtT|q\alpha-a|\Big)^{-1}\ll (y_2-y_1)^{-1}Y\Big(q+X^dY^d|q\alpha-a|\Big)^{-1}.\end{align*}
We deduce that
\begin{align*}\Xi(\alpha)\ll \frac{XY^{2^{1-d}}(\log X) q^{\varepsilon}}{\big(q+X^dY^d|q\alpha-a|\big)^{2^{1-d}}}\cdot
\frac{(y_2^d-y_1^d,q)^{2^{1-d}}}{(y_2-y_2)^{2^{1-d}}}+X^{1-2^{1-d}+\varepsilon}.\end{align*}

Now we conclude  from \eqref{bounddivisor1} and \eqref{tildepsi} that
\begin{align*}\widetilde{\Psi}  \ll
\frac{XY^2(\log X)^{6} q^{\varepsilon}}{(q+X^dY^d|q\alpha-a|)^{2^{1-d}}}+X^{1-2^{1-d}+\varepsilon}Y^2.\end{align*}
Then we established \eqref{boundpsi2} due to $\alpha\in \mathfrak{r}(B)$.  We complete the proof by inserting \eqref{boundpsi2} into \eqref{boundpsisquare}.
\end{proof}

Lemma \ref{lemmatype2} provides the type II estimate and the next result provides the type I estimate.
\begin{lemma}\label{lemmatype1}Suppose that $B\le \frac{P}{X}$ and $B\le \frac{P^{d/2}}{X^d}$. Let $\alpha \in \mathfrak{r}(B)$. Then we have
\begin{align}\label{boundtype1}\sum_{x\le X}\xi(x)\sum_{y\le P/x}e\big(f(xy)\big)\ll P(\log B)^{3}B^{-2^{1-d}+\varepsilon},\end{align}
and
\begin{align}\label{boundtype1log}\sum_{x\le X}\xi(x)\sum_{y\le P/x}(\log y)e\big(f(xy)\big)\ll P(\log B)^{4}B^{-2^{1-d}+\varepsilon}.\end{align}
\end{lemma}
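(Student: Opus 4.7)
The plan is, for each fixed $x\le X$, to bound the inner sum $\sum_{y\le P/x} e(f(xy))$ by Weyl's inequality (Lemma~\ref{lemmaweyl3}), viewing it as a Weyl sum of length $P/x$ for a polynomial of degree $d$ in $y$ with leading coefficient $\alpha x^{d}$, and then to sum the resulting pointwise bound against $|\xi(x)|\le \tau(x)$. Dirichlet's theorem furnishes $a_x,q_x$ with $(a_x,q_x)=1$, $q_x\le 2(P/x)^{d-1}$, and $|q_x\alpha x^d-a_x|\le 1/(2(P/x)^{d-1})$, so Lemma~\ref{lemmaweyl3} yields
\begin{equation*}
\sum_{y\le P/x} e\bigl(f(xy)\bigr)
\ll \frac{(P/x)(\log P)\,q_x^{\varepsilon}}{\bigl(q_x+(P/x)^d|q_x\alpha x^d-a_x|\bigr)^{2^{1-d}}}
+(P/x)^{1-2^{1-d}+\varepsilon}.
\end{equation*}
The contribution of the error term summed against $\tau(x)$ over $x\le X$ is $\ll P^{1-2^{1-d}+\varepsilon}X^{2^{1-d}}\ll PB^{-2^{1-d}+\varepsilon}$, which is acceptable thanks to $BX\le P$.

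The heart of the proof is to establish the uniform lower bound
\begin{equation*}
q_x+(P/x)^d|q_x\alpha x^d-a_x|\gg B \qquad (x\le X).
\end{equation*}
Assume for contradiction that $q_x<B$ and $|q_x\alpha x^d-a_x|<B(x/P)^d$. Then $a_x/(q_xx^d)$ is a rational approximation to $\alpha$ with denominator $q_xx^d\le BX^d\le P^{d/2}$ and deviation $|q_xx^d\alpha-a_x|\le BX^d/P^d\le P^{-d/2}$, both uses of $B\le P^{d/2}/X^d$. On the other hand, applying Dirichlet to $\alpha$ with denominator $P^d/B$ yields $(a,q)$ in lowest terms with $q\le P^d/B$ and $|q\alpha-a|\le B/P^d$, and the minor-arc hypothesis $\alpha\in\mathfrak{r}(B)$ forces $q>B$. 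A uniqueness-of-convergents comparison, using the extra room afforded by $BX\le P$, forces the reduced form of $a_x/(q_xx^d)$ to coincide with $a/q$, which then yields a reduced denominator simultaneously $\le B$ (since it equals $q_x$ after cancellation) and $>B$ (since it equals $q$), a contradiction.

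Granted the lower bound, the main term sums to
\begin{equation*}
P(\log P)B^{-2^{1-d}+\varepsilon}\sum_{x\le X}\tau(x)x^{-1}\ll P(\log B)^{3}B^{-2^{1-d}+\varepsilon},
\end{equation*}
proving \eqref{boundtype1}, with any excess $P^{\varepsilon}$ absorbed into $B^{\varepsilon}$ since $B$ and $P$ are both polynomial in scale. The logarithmically weighted variant \eqref{boundtype1log} is then immediate: partial summation in $y$ strips the factor $\log y$ from $\sum_{y\le P/x}(\log y)e(f(xy))$ and reduces it to \eqref{boundtype1} at a cost of a single extra logarithm.

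The principal obstacle is the Dirichlet-comparison step, since the reduction of $a_x/(q_xx^d)$ to lowest terms has denominator only bounded \emph{a priori} by $BX^d$, and both hypotheses $B\le P/X$ and $B\le P^{d/2}/X^d$ appear to be indispensable to exclude the possibility of a second, genuinely distinct Dirichlet approximation to $\alpha$ and to force the two rationals to coincide.
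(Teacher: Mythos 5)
Your proposal begins the same way as the paper (Dirichlet approximation of $\alpha x^d$ followed by Lemma~\ref{lemmaweyl3} on the inner sum), but the pivotal step is wrong: the uniform lower bound
\[
q_x+(P/x)^d\,|q_x\alpha x^d-a_x|\gg B\qquad (x\le X)
\]
is \emph{false}, and as a consequence the pointwise bound $\ll (P/x)(\log P)B^{-2^{1-d}+\varepsilon}$ for the inner sum does not hold uniformly in $x$. The error lies in the sentence ``which then yields a reduced denominator simultaneously $\le B$ (since it equals $q_x$ after cancellation).'' Since $(a_x,q_x)=1$, the reduced denominator of $a_x/(q_xx^d)$ is $q_xx^d/\gcd(a_x,x^d)$, not $q_x$; in general $\gcd(a_x,x^d)$ is far smaller than $x^d$, so this reduced denominator can be close to $q_xx^d$, which can vastly exceed $B$. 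Thus the comparison with the Dirichlet approximant $a/q$ of $\alpha$ produces no contradiction. Concretely, take $d=2$, pick a prime $q$ with $B<q\ll X$ (possible when, say, $B\ll P^{1/3}$, which is consistent with both side conditions), and let $\alpha=a/q+\epsilon$ with $\epsilon$ extremely small; then $\alpha\in\mathfrak{r}(B)$, yet for $x=q$ one has $\alpha x^2=aq+\epsilon q^2$, so $q_x=1$ and $q_x+(P/x)^2|q_x\alpha x^2-a_x|\approx 1$, far below $B$.

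The paper sidesteps this precisely by \emph{not} claiming a uniform lower bound. In the exceptional case (their \eqref{type1br}), they compare $b/r$ with $ax^d/q$ (keeping $x^d$ in the numerator rather than dividing by it), deduce $b/r=ax^d/q$ and hence $r=q/\gcd(q,x^d)$, and substitute back into \eqref{boundtype1inner}. This yields a pointwise bound for the inner sum containing a factor $(q,x^d)^{2^{1-d}}$ in the numerator --- which can be large for special $x$, but which averages to $O(q^\varepsilon)$ over $x\le X$ because $(q,x^d)^{2^{1-d}}\le(q,x)$ for $d\ge2$ and $\sum_{x\le X}\tau(x)(q,x)/x\ll q^\varepsilon(\log X)^2$. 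Only after this averaging in $x$ is $\alpha\in\mathfrak{r}(B)$ invoked, via $q+P^d|q\alpha-a|>B$. Your plan omits the $\gcd$-weighted average and therefore cannot be repaired without restructuring along the lines of the paper: the final summation over $x$ has to carry, and absorb, the factor $(q,x^d)^{2^{1-d}}$. (A lesser point: the claim that ``any excess $P^\varepsilon$ is absorbed into $B^\varepsilon$'' is not justified in the lemma's generality, since $B$ may be as small as a power of $\log P$; but this is a secondary issue next to the false uniform bound.)
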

\begin{proof}Note that \eqref{boundtype1log} follows from \eqref{boundtype1} and the partial summation formula
\begin{align*}\sum_{y\le P/x}(\log y)e\big(f(xy)\big)=\sum_{y\le P/x}e\big(f(xy)\big)\log (P/x)-
\int_{1}^{P/x}\sum_{y\le \gamma}e\big(f(xy)\big)\frac{d\gamma}{\gamma}.\end{align*}
Therefore, we only need to establish  \eqref{boundtype1}.

By Dirichlet's approximation theorem, there exist $b\in \Z$ and $r\in \N$ such that
\begin{align*}(b,r)=1,\ r\le 2(P/x)^{d-1}\ \textrm{ and } \ |x^d\alpha-\frac{b}{r}|\le \frac{1}{2r(P/x)^{d-1}}.\end{align*}
On applying Lemma \ref{lemmaweyl3}, we have
\begin{align}\label{boundtype1inner}\sum_{y\le P/x}e\big(f(xy)\big)\ll  \frac{(\log P)(P/x)r^{\varepsilon}}{\big(r+(P/x)^d|rx^d\alpha-b|\big)
^{2^{1-d}}}+(P/x)^{1-2^{1-d}+\varepsilon}.\end{align}
Note that $B\le P/X\le P/x$ for $x\le X$. We conclude from \eqref{boundtype1inner} that
\begin{align*}\sum_{y\le P/x}e\big(f(xy)\big)\ll & (\log P)(P/x)B^{-2^{1-d}+\varepsilon}\end{align*}
unless
\begin{align}\label{type1br}r\le \frac{B}{3}\ \textrm{ and } \ |x^d\alpha-\frac{b}{r}|\le \frac{B}{3r(P/x)^{d}}.\end{align}

From now on, we assume that \eqref{type1br} holds. By Dirichlet's approximation theorem, there exist $a\in \Z$ and $q\in \N$ such that
\begin{align}\label{type1aq}(a,q)=1,\ q\le P^{d/2}\ \textrm{ and } \ |\alpha-\frac{a}{q}|\le \frac{1}{qP^{d/2}}.\end{align}
Then we deduce by \eqref{type1br} and \eqref{type1aq} that
\begin{align*}\big|\frac{b}{r}-\frac{ax^d}{q}\big|\le &
\big|x^d\alpha-\frac{b}{r}\big|+\big|x^d\alpha-\frac{ax^d}{q}\big|
\\ \le & \frac{Bx^d}{3rP^{d}}+\frac{x^d}{qP^{d/2}}
\le \frac{1}{3rq}\Big(\frac{qBx^d}{P^d}+\frac{3rx^d}{P^{d/2}}\Big)\le \frac{2BX^d}{3rqP^{d/2}}<\frac{1}{rq},\end{align*}
and it follows that
\begin{align}\label{type1linkbraq}\frac{b}{r}=\frac{ax^d}{q} \ \textrm{ and } \ r=\frac{q}{(q,x^d)}.\end{align}

We obtain from \eqref{boundtype1inner} and \eqref{type1linkbraq} that
\begin{align*}\sum_{y\le P/x}e\big(f(xy)\big)\ll (\log P)(P/x)\frac{q^{\varepsilon}(q,x^d)^{2^{1-d}-\varepsilon}}{\big(q+P^d|q\alpha-a|\big)
^{2^{1-d}}}+(P/x)^{1-2^{1-d}+\varepsilon}.\end{align*}
Then we deduce that
\begin{align*}\sum_{x\le X}\xi(x)\sum_{y\le P/x}e\big(f(xy)\big)
 \ll \frac{P(\log P)^{3}q^{\varepsilon}}{(q+P^d|q\alpha-a|)
^{2^{1-d}}}+P(P/X)^{-2^{1-d}+\varepsilon}.\end{align*}
This completes the proof since $\alpha \in \mathfrak{r}(B)$ and $B\le P/X$.
\end{proof}

\begin{lemma}\label{lemmaexpprime}Let $0<b'<b''<1$ be two fixed constants. Suppose that $Q\le P^{1/4}$. Let $\alpha\in \mathfrak{r}(Q)$. Then we have
\begin{align*}\sum_{b'P<x\le b''P}\Lambda(x)e\big(f(x)\big)\ll P(\log P)^{7}Q^{-\frac{1}{2^{d}}+\varepsilon}.\end{align*}
\end{lemma}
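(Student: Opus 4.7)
The plan is to apply Vaughan's identity to the von Mangoldt function and then bound the resulting bilinear forms by the Type I estimate of Lemma \ref{lemmatype1} and the Type II estimate of Lemma \ref{lemmatype2}. The choice of parameters $U=V=P^{1/4}$ in Vaughan's identity is dictated by the hypothesis $Q\le P^{1/4}$: it forces every Type II piece to have its longer variable $X$ in the range $X\ge P^{1/2}$, which is exactly what is needed for the Type II condition $B\le X^{1/2}$ to hold with $B=Q$.

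After substitution, Vaughan's identity expresses $\sum_{b'P<x\le b''P}\Lambda(x)e(f(x))$ as $S_1-S_2+S_3$ (modulo a negligible contribution from $x\le U$), where $S_1$ is a Type I sum with outer variable $m\le U=P^{1/4}$ and weight $\mu(m)$, $S_2$ is a Type I sum with outer variable $m\le UV=P^{1/2}$ and weight bounded by $\tau(m)\log P$, and $S_3$ is a Type II sum over $m>U$, $k>V$ with $mk\asymp P$. For $S_1$, the outer variable $m\le P^{1/4}$ satisfies both $Q\le P/m$ and $Q\le P^{d/2}/m^d$ (since $m^d\le P^{d/4}\le P^{d/2}/Q$ using $Q\le P^{1/4}$), so Lemma \ref{lemmatype1} applies and yields a contribution dominated by $P(\log P)^{O(1)}Q^{-2^{1-d}+\varepsilon}$, stronger than required. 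For $S_2$, after a dyadic dissection $m\sim M$, I split into two ranges: for $M\le P^{1/2}/Q^{1/d}$ Lemma \ref{lemmatype1} applies directly; for the remaining range $M\in (P^{1/2}/Q^{1/d},\, P^{1/2}]$ the Type I condition $Q\le P^{d/2}/M^d$ fails, so I reinterpret the bilinear sum as a Type II sum by swapping $m$ and $k$, setting $X=P/M\ge P^{1/2}$ and $Y=M\le P^{1/2}$. The hypothesis $Q\le X^{1/2}$ then holds because $X\ge P^{1/2}$ and $Q\le P^{1/4}$, so Lemma \ref{lemmatype2} applies.

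For $S_3$, I dyadically decompose $m\sim M$, $k\sim K$ with $M,K>P^{1/4}$ and $MK\asymp P$, then take $X=\max(M,K)\ge P^{1/2}$ and $Y=\min(M,K)\le P^{1/2}$; both coefficients are bounded crudely ($|\Lambda|\le \log P$ replacing the $\tau$-bound, at the cost of one $\log P$ factor), and Lemma \ref{lemmatype2} gives a main-term contribution $P(\log P)^{5}Q^{-1/2^{d}+\varepsilon}$ plus a secondary term $XY^{1/2}(\log X)^{3}\ll P^{7/8}(\log P)^{3}$. Summing over the $O(\log P)$ dyadic pieces accumulates further logarithmic factors, yielding the claimed bound $P(\log P)^{7}Q^{-1/2^{d}+\varepsilon}$. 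The main obstacle is the intermediate range $M\in (P^{1/2}/Q^{1/d},P^{1/2}]$ in $S_2$, where neither direct Type I nor a straightforward Type II bound works with the original variables; its resolution by swapping depends precisely on the hypothesis $Q\le P^{1/4}$, which both constrains the statement and determines the choice $U=V=P^{1/4}$ in Vaughan's identity.
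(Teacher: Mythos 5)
Your proof is correct and rests on the same pillars as the paper's: Vaughan's identity, the Type I bound of Lemma~\ref{lemmatype1}, and the Type II bound of Lemma~\ref{lemmatype2}. The difference is the cut in Vaughan's identity. The paper takes $W=P^{1/8}$, so the Type~I sums have outer variable at most $W^2=P^{1/4}$; since $Q\le P^{1/4}$ one then always has $Q\le P/X$ and $Q\le P^{d/2}/X^d$ (indeed $P^{d/2}/X^d\ge P^{d/4}\ge P^{1/4}$), so Lemma~\ref{lemmatype1} applies on the nose across the whole Type~I range and there is no intermediate range to worry about. Your choice $U=V=P^{1/4}$ pushes the second Type~I sum out to $m\le P^{1/2}$, and for $m\sim M$ with $M\in(P^{1/2}/Q^{1/d},P^{1/2}]$ the condition $Q\le P^{d/2}/M^d$ genuinely fails, which is why you need the extra step of flipping that block into a Type~II sum. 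The flip does work — after the swap $X=P/M\ge P^{1/2}$, $Y=M\le P^{1/2}$, so $Y\le X$, $XY\asymp P$ and $Q\le P^{1/4}\le X^{1/2}$, and the weight $\xi(m)$ on the short variable is bounded by $\log P$, so one normalises by $\log P$ to satisfy the hypothesis $|\eta(y)|\le\tau(y)$ of Lemma~\ref{lemmatype2} at the cost of one extra logarithm — but the manoeuvre is unnecessary: it is created entirely by taking $UV$ too large. Your stated rationale, that $U=V=P^{1/4}$ is ``dictated'' by $Q\le P^{1/4}$ so that the Type~II long variable exceeds $P^{1/2}$, is not quite right; in any Type~II piece with $mk\asymp P$ the longer variable is automatically $\gg P^{1/2}$ regardless of the lower cut, so a smaller cut such as $P^{1/8}$ already guarantees $Q\le X^{1/2}$, while simultaneously keeping all Type~I pieces safely inside the range where Lemma~\ref{lemmatype1} applies. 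A small bookkeeping note: the decomposition should be $S_1-S_2-S_3$, not $S_1-S_2+S_3$, and in the swapped Type~II application one should either treat the two-sided constraint $b'P/k<m\le b''P/k$ by inclusion--exclusion or note that the proof of Lemma~\ref{lemmatype2} tolerates a sub-interval of $(Y/2,Y]$ depending on the long variable. Neither affects the conclusion.
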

\begin{proof}
Let
$$W=P^{1/8}.$$
We apply Vaughan's identity to conclude that for $b'P< n\le b''P$,
\begin{align*}
\Lambda(n)=\sum_{\substack{xy=n\\ 1\le x\le W}}\mu(x)\log y -
\sum_{\substack{xy=n\\ 1\le x\le W^2}}\xi(x)-\sum_{\substack{xy=n\\ x>W \\ y>W}}\eta(x)\Lambda(y),  \end{align*}
where
$$\xi(x)=\sum_{\substack{1\le x_1,x_2\le W^2 \\ x_1x_2=x}}\mu(x_1)\Lambda(x_2),\ \ \
\eta(x)=\sum_{\substack{1\le x_1\le W \\ x_1x_2=x}}\mu(x_1),$$

Then we deduce that
\begin{align*}\sum_{b'P<x\le b''P}\Lambda(x)e\big(f(x)\big)=\Psi_1+\Psi_1'+\Psi_2,\end{align*}
where
\begin{align*}\Psi_1=\sum_{x\le W}\mu(x) \sum_{b'P/x< y\le b''P/x}(\log y)e\big(f(xy)\big),\end{align*}
\begin{align*}\Psi_1'=\sum_{x\le W^2}\xi(x) \sum_{b'P/x< y\le b''P/x}e\big(f(xy)\big)\end{align*}
and
\begin{align*}\Psi_2=\sum_{x> W}\eta(x) \sum_{\substack{b'P/x< y\le b''P/x
\\ y>W}}\Lambda(y)e\big(f(xy)\big).\end{align*}
We deduce from Lemma \ref{lemmatype1} that
\begin{align*}|\Psi_1|+|\Psi_1'|\ll P(\log P)^{4}Q^{-\frac{1}{2^{d-1}}+\varepsilon}.\end{align*}
By Lemma \ref{lemmatype2} and the dyadic argument,
\begin{align*}\Psi_2\ll P(\log P)^{7}Q^{-\frac{1}{2^{d}}+\varepsilon}.\end{align*}
This completes the proof.
\end{proof}

For $h_1,\ldots,h_R\in \Z[x_1,\ldots,x_t]$, we use
$$\rank(\mathbf{h})=\rank(h_1,\ldots,h_R)$$
to denote the rank of $\{h_1,\ldots,h_R\}$ by viewing $h_1,\ldots,h_R$ as vectors in the linear space $\R[x_1,\ldots,x_t]$ over $\R$. In other words, $\rank(\mathbf{h})$ is the dimension of the subspace of $\R[x_1,\ldots,x_t]$ generated by $h_1,\ldots,h_R$.

For $1\le i\le R$, let $h_i(x_1,\ldots,x_t)$ be a form of degree $d$ and let  $g_i(x_1,\ldots,x_t)$ be a polynomial of $\mathbf{x}=(x_1,\ldots,x_t)$ of degree smaller than $d$.
Let
$$f_i=h_i+g_i\ \textrm{ for } \ 1\le i\le R.$$
We define
\begin{align}\label{defineE}\mathcal{E}(\boldsymbol{\alpha})=\sum_{x_1,\ldots,x_t\in \mathcal{B}_t(P)}\Lambda(x_1)\cdots \Lambda(x_t)e\Big(\boldsymbol{\alpha}\cdot \mathbf{f}(x_1,\ldots,x_t)\Big).\end{align}

\begin{lemma}\label{lemmaboundE1}Let $\mathcal{E}(\boldsymbol{\alpha})$ be given in \eqref{defineE}. Suppose that $\rank(\mathbf{h})=R$. Let $Q\le P^{1/4}$ and $\boldsymbol{\alpha}\in \mathfrak{m}(Q)$. Then we have
\begin{align}\label{boundE}\mathcal{E}(\boldsymbol{\alpha})\ll P^{t}(\log P)^{7t}Q^{-\frac{1}{2^{d}R}+\varepsilon}.\end{align}
\end{lemma}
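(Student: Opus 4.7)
The plan is to reduce the bound on $\mathcal{E}(\boldsymbol{\alpha})$ to a single-coefficient Weyl-type estimate, applied to a carefully chosen linear combination of $\boldsymbol{\alpha}$ extracted from $\sum_{i=1}^R\alpha_ih_i$ via the rank hypothesis.

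First, using $\rank(\mathbf{h})=R$, I would select $R$ multi-indices $\gamma^{(1)},\ldots,\gamma^{(R)}$ with $|\gamma^{(r)}|=d$ for which the $R\times R$ matrix $A=(A_{ir})$, defined by letting $A_{ir}$ be the coefficient of $\mathbf{x}^{\gamma^{(r)}}$ in $h_i$, is non-singular. Setting $\boldsymbol{\beta}=A^T\boldsymbol{\alpha}$ makes $\beta_r$ equal to the coefficient of $\mathbf{x}^{\gamma^{(r)}}$ in $\sum_i\alpha_i h_i(\mathbf{x})$; because the remainder $g_i$ has total degree strictly less than $d$, this is simultaneously the coefficient of $\mathbf{x}^{\gamma^{(r)}}$ in $\sum_i\alpha_i f_i$.

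The heart of the argument is then to show that \emph{there exists an index $r_0$ with $\beta_{r_0}\in\mathfrak{r}(cQ^{1/R})$}, where $c>0$ is a constant depending only on $\mathbf{h}$. I would argue by contraposition: if every $\beta_r$ admitted an approximation $b_r/q_r$ with $(b_r,q_r)=1$, $q_r\le cQ^{1/R}$, and $|\beta_r-b_r/q_r|\le cQ^{1/R}/(q_rP^d)$, then one would assemble the common denominator $q=\mathrm{lcm}(q_1,\ldots,q_R)\le c^RQ$, pass through $A^{-1}$, and produce integers $\widetilde B_i$ with $|\alpha_i-\widetilde B_i/(q|\det A|)|$ bounded by a constant multiple of $cQ^{1/R}/P^d$. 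Choosing $c$ small enough in terms of $A$, this would place $\boldsymbol{\alpha}$ inside $\mathfrak{M}(Q)$, contradicting $\boldsymbol{\alpha}\in\mathfrak{m}(Q)$.

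Once such an $r_0$ is located, let $k\ge 1$ be the number of variables appearing in $\mathbf{x}^{\gamma^{(r_0)}}$; after relabelling these are $x_1,\ldots,x_k$ with exponents $d_1,\ldots,d_k\ge 1$ summing to $d$. Fixing the remaining $t-k$ variables and isolating the inner sum over $x_1,\ldots,x_k$ produces a polynomial of degree exactly $d$ in $k$ variables whose coefficient of $x_1^{d_1}\cdots x_k^{d_k}$ is the constant $\beta_{r_0}\in\mathfrak{r}(cQ^{1/R})$. I would apply Lemma~\ref{lemmaexpprime} when $k=1$ and Lemma~\ref{lemma61} when $k\ge 2$ to bound the inner sum by $P^k(\log P)^{O(k)}(cQ^{1/R})^{-1/2^d+\varepsilon}$; completing the remaining $t-k$ sums trivially via $\Lambda(x)\le\log P$ delivers
\[
\mathcal{E}(\boldsymbol{\alpha})\ll P^t(\log P)^{7t}Q^{-1/(2^dR)+\varepsilon}.
\]
The main technical obstacle lies in the third paragraph: extracting a single linear combination of $\boldsymbol{\alpha}$ that is genuinely in the minor arcs at the sharp scale $Q^{1/R}$, since a naive simultaneous Dirichlet argument tends to produce only the weaker scale $Q^{1/(R+1)}$. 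The correct exponent has to be secured by carefully exploiting the coprimality $(b_r,q_r)=1$ in each individual approximation together with the common-denominator structure built into the definition of $\mathfrak{M}(Q)$.
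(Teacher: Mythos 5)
Your approach is essentially the paper's: form the matrix $A$ (the paper's $B$), transform $\boldsymbol{\beta}=A^T\boldsymbol{\alpha}$, argue by contraposition that some $\beta_{r_0}\in\mathfrak{r}(cQ^{1/R})$, isolate the monomial $x_1^{d_1}\cdots x_k^{d_k}$, and apply Lemma~\ref{lemmaexpprime} (one variable) or Lemma~\ref{lemma61} (several variables) after fixing the other variables. The reduction and the appeal to the single-coefficient Weyl estimates are the same as in the paper.

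There is, however, one genuine imprecision in the contraposition step. You assert that passing $(b_r/q_r)_{r\le R}$ through $A^{-1}$ produces integers $\widetilde B_i$ with $|\alpha_i-\widetilde B_i/(q|\det A|)|\ll cQ^{1/R}/P^d$, and that choosing $c$ small puts $\boldsymbol{\alpha}$ in $\mathfrak{M}(Q)$. That error bound is correct as stated but too weak to draw the conclusion: membership in $\mathfrak{M}(q',\mathbf{a};Q)$ with $q'=q|\det A|\asymp c^R Q$ requires $|\alpha_i-\widetilde B_i/q'|\le Q/(q'P^d)\asymp c^{-R}P^{-d}$, whereas $cQ^{1/R}/P^d$ grows without bound as $P\to\infty$ (since $Q$ may be as large as $P^{1/4}$). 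No choice of $c$ rescues this. The missing ingredient is that each individual error $|\beta_r-b_r/q_r|\le cQ^{1/R}/(q_rP^d)$ carries a factor $1/q_r$. Propagating that factor through the inverse matrix gives $|\alpha_i-\widetilde B_i/q'|\ll \frac{cQ^{1/R}}{|\det A| P^d}\sum_r\frac{1}{q_r}$, and then $q'\cdot|\alpha_i-\widetilde B_i/q'|\ll cQ^{1/R}\sum_r(q/q_r)\ll cQ^{1/R}\cdot R(cQ^{1/R})^{R-1}=Rc^RQ$, which \emph{is} $\le Q/P^d\cdot P^d$ once $c$ is small — this is precisely the computation in the paper's proof, where $q'=r_1\cdots r_R\det(B)$ and the estimate $|\eta_j|\le R!C_{\mathbf{h}}^{R-1}Y^R/(|q'|P^d)$ explicitly keeps the $1/|q'|$ factor. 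Your closing paragraph signals awareness that something delicate happens at scale $Q^{1/R}$, but the mechanism you name (coprimality $(b_r,q_r)=1$) is not what saves the argument; it is the $1/q_r$ decay of the approximation error. Once this is tracked, the use of $\mathrm{lcm}$ rather than the product is a harmless cosmetic variation (since $\mathrm{lcm}(q_1,\ldots,q_R)\le q_1\cdots q_R$) and the rest of the argument goes through as you describe.
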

\begin{proof}We represent $h_i(x_1,\ldots,x_t)$ in the form
\begin{align}\label{coeff2}h_i(x_1,\ldots,x_t)=\sum_{1\le j_0\le \ldots \le j_{d-1}\le t}h^{(i)}_{j_0,\ldots,j_{d-1}}x_{j_0}\cdots x_{j_{d-1}},\end{align}
where the coefficients $h_{j_0,j_1,\ldots,j_{d-1}}^{(i)}$ are integers. Note that the notation of the coefficient in \eqref{coeff2} is different from \eqref{coeff1}.
For each $1\le k\le R$, we choose a vector
\begin{align}\label{mathbfj}\mathbf{j}_k=(j_0^{(k)},\ldots,j_{d-1}^{(k)})\in \N^d\end{align}
with $1\le j_0^{{k}}\le \ldots\le j_{d-1}^{(k)}\le t$, and we introduce the square matrix
\begin{align*}B(\mathbf{j}_1,\ldots,\mathbf{j}_R)=\big(h_{\mathbf{j}_k}^{(i)}\big)_{1\le i,k\le R}.\end{align*}

Since $\rank(\mathbf{h})=R$, we can choose $\mathbf{j}_1,\ldots,\mathbf{j}_R$ such that $B:=B(\mathbf{j}_1,\ldots,\mathbf{j}_R)$ is invertible.
Let
$$\boldsymbol{\beta}=\boldsymbol{\alpha}B.$$
We view $\boldsymbol{\alpha}\cdot \mathbf{f}(x_1,\ldots,x_t)$ as a polynomial of $x_1,\ldots,x_t$, and the coefficient of $x_{j_0^{(k)}}\cdots x_{j_{d-1}^{(k)}}$ is the $k$-th coordinate of $\boldsymbol{\beta}$. Let
$C_{\mathbf{h}}$ denote the maximum of the absolute values of coefficients of $\mathbf{h}$. Let
\begin{align}\label{chooseY}Y=\frac{1}{RC_{\mathbf{h}}}Q^{\frac{1}{R}}.\end{align}

We claim that $\beta_k\in \mathfrak{r}(Y)$ for some $1\le k\le R$. Otherwise, we have $\beta_k\in \mathfrak{R}(Y)$ for all $1\le k\le R$. In particular, there exist $b_k\in \Z$, $r_k\in \N$ and $\gamma_k\in \R$ such that
\begin{align}\label{brk}1\le r_k\le Y,\ (b_k,r_k)=1,\ |\gamma_k|\le \frac{Y}{r_kP^{d}} \ \textrm{ and }\ \beta_k=\frac{b_k}{r_k}+\gamma_k.\end{align}
Let $B_{j,k}$ denote the $(j,k)$-th algebraic minor of $B$. We introduce
\begin{align}\label{aqR}a_j'=r_1\cdots r_R\sum_{k=1}^R\frac{b_k}{r_k}B_{j,k},\, \ q'=r_1\cdots r_R\det(B) \ \textrm{ and }\
\eta_j=\frac{1}{\det(B)}\sum_{k=1}^R\gamma_kB_{j,k}.\end{align}
Note that $|B_{j,k}|\le (R-1)!C_{\mathbf{h}}^{R-1}$ and $|\det(B)|\le R!C_{\mathbf{h}}^{R}$.
We deduce that
$$|q'|\le R!C_{\mathbf{h}}^{R}Y^{R} \ \textrm{ and }\ |\eta_j|\le \frac{(R-1)!C_{\mathbf{h}}^{R-1}Y}{|\det(B)|P^d} \sum_{k=1}^R\frac{1}{r_k}\le R!C_{\mathbf{h}}^{R-1} \frac{Y^R}{|q'|P^d}.$$
By \eqref{chooseY}, we have $|q'|\le Q$ and $|\eta_j|\le \frac{Q}{|q'|P^d}$.

We deduce from $\boldsymbol{\beta}=\boldsymbol{\alpha}B$ that
\begin{align}\label{alphajeq}\alpha_j=\frac{1}{\det(B)}\sum_{k=1}^R\beta_kB_{j,k}.\end{align}
Then we conclude from \eqref{brk}, \eqref{aqR} and \eqref{alphajeq} that
$$\alpha_j=\frac{a_j'}{q'}+\eta_j,\ j=1,\ldots,R.$$
This is a contradiction to $\boldsymbol{\alpha}\in \mathfrak{m}(Q)$. Therefore, we established the claim that $\beta_k\in \mathfrak{r}(Y)$ for some $1\le k\le R$.

We represent $x_{j_0^{(k)}}\cdots x_{j_{d-1}^{(k)}}$ in the form $x_{i_1}^{d_1}\cdots x_{i_m}^{d_m}$
with $i_1,\ldots,i_m$ pairwise distinct.
Then $\boldsymbol{\alpha}\cdot \mathbf{f}(x_1,\ldots,x_t)$ can be represented in the form
\begin{align*}\boldsymbol{\alpha}\cdot \mathbf{f}(x_1,\ldots,x_t)=\beta_k x_{i_1}^{d_1}\cdots x_{i_m}^{d_m}+g(x_1,\ldots,x_t),\end{align*}
where the coefficient of $x_{i_1}^{d_1}\cdots x_{i_m}^{d_m}$ in $g(x_1,\ldots,x_t)$ is zero and $\deg(g)\le d$.
 Now \eqref{boundE} follows from Lemma \ref{lemma61} in the case $m\ge 2$ and from Lemma \ref{lemmaexpprime} in the case $m=1$.
\end{proof}

\begin{lemma}\label{lemmaboundE2}Let $\mathcal{E}(\boldsymbol{\alpha})$ be given in \eqref{defineE}. Suppose that $\rank(\mathbf{h})=R$. Let $Q\le \frac{1}{4}P^{\frac{Rd}{R+1}}$ and $\boldsymbol{\alpha}\in \mathfrak{m}(Q)$. Then we have
\begin{align}\label{boundE2}\mathcal{E}(\boldsymbol{\alpha})\ll P^{t}(\log P)^{7t}Q^{-\frac{R+1}{2^{d+2}dR^2}+\varepsilon}.\end{align}
\end{lemma}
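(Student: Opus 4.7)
The plan is to derive Lemma \ref{lemmaboundE2} from Lemma \ref{lemmaboundE1} by a case split on the size of $Q$, pivoting at $Q = P^{1/4}$, which is the threshold at which the two statements stop overlapping in range. Both cases reduce to Lemma \ref{lemmaboundE1}; the only thing to check is that the exponents line up as advertised.

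First I would treat the easy regime $Q \le P^{1/4}$, in which Lemma \ref{lemmaboundE1} applies verbatim and yields $\mathcal{E}(\boldsymbol{\alpha}) \ll P^t(\log P)^{7t} Q^{-1/(2^d R) + \varepsilon}$. Since $Q \ge 1$, it suffices to verify the inequality of exponents
\begin{align*}
\frac{1}{2^d R}\ \ge\ \frac{R+1}{2^{d+2}\,d\,R^2},
\end{align*}
which is equivalent to $4dR \ge R+1$. This holds trivially for $d\ge 2$ and $R\ge 1$, so the bound from Lemma \ref{lemmaboundE1} is already at least as strong as the target bound \eqref{boundE2}.

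For the main regime $P^{1/4} < Q \le \tfrac{1}{4}P^{Rd/(R+1)}$, I would exploit the monotonicity of the minor arcs in the parameter. Because $\mathfrak{M}(Q')\subseteq \mathfrak{M}(Q)$ for $Q'\le Q$, we have the inclusion $\mathfrak{m}(Q)\subseteq \mathfrak{m}(P^{1/4})$, so $\boldsymbol{\alpha}$ still lies in the minor arcs at level $P^{1/4}$. Invoking Lemma \ref{lemmaboundE1} with parameter $P^{1/4}$ in place of $Q$ then gives
\begin{align*}
\mathcal{E}(\boldsymbol{\alpha})\ \ll\ P^t(\log P)^{7t}\, (P^{1/4})^{-\frac{1}{2^d R}+\varepsilon}\ =\ P^t(\log P)^{7t}\, P^{-\frac{1}{2^{d+2}R}+\varepsilon}.
\end{align*}
The upper bound $Q\le \tfrac{1}{4}P^{Rd/(R+1)}$ rearranges to
\begin{align*}
Q^{\frac{R+1}{2^{d+2}dR^{2}}}\ \ll\ P^{\frac{1}{2^{d+2}R}},\qquad \text{i.e.,}\qquad P^{-\frac{1}{2^{d+2}R}}\ \ll\ Q^{-\frac{R+1}{2^{d+2}dR^{2}}},
\end{align*}
where the exponent of $P$ on the right of the first inequality is precisely $\frac{Rd}{R+1}\cdot\frac{R+1}{2^{d+2}dR^{2}}=\frac{1}{2^{d+2}R}$; this is the very reason the target exponent in Lemma \ref{lemmaboundE2} takes the shape $\frac{R+1}{2^{d+2}dR^{2}}$. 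Combining these two displays yields \eqref{boundE2}.

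There is no real obstacle here: the argument is a bookkeeping exercise, and the entire content of the lemma is that extending the range of $Q$ beyond $P^{1/4}$ up to the Weyl barrier $P^{Rd/(R+1)}$ costs exactly the factor $(R+1)/(dR)$ in the exponent, which is unavoidable because one can only exploit the nontrivial minor-arc bound at the threshold $P^{1/4}$. The only minor subtlety is that the $\varepsilon$ in the exponent shrinks by a factor $1/4$ in Case~2 when one substitutes $P^{1/4}$ for $Q$, but this is harmless since $\varepsilon>0$ is arbitrary.
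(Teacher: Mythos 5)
Your proof is correct and takes essentially the same route as the paper: a case split at $Q=P^{1/4}$, using Lemma \ref{lemmaboundE1} directly when $Q\le P^{1/4}$ (with the exponent comparison $4dR\ge R+1$) and using the minor-arc inclusion $\mathfrak{m}(Q)\subseteq\mathfrak{m}(P^{1/4})$ together with the upper bound $Q\le\tfrac14 P^{Rd/(R+1)}$ when $Q>P^{1/4}$. Your write-up is in fact slightly more explicit than the paper's, which leaves the first case and the minor-arc inclusion implicit.
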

\begin{proof}By Lemma \ref{lemmaboundE1}, we only need to prove \eqref{boundE2} when $P^{1/4}\le Q\le \frac{1}{4}P^{\frac{Rd}{R+1}}$. For $Q\ge P^{1/4}$,  Lemma \ref{lemmaboundE1} implies
\begin{align}\label{boundE3}\mathcal{E}(\boldsymbol{\alpha})\ll P^{t}(\log P)^{7t}P^{-\frac{1}{2^{d+2}R}+\varepsilon}.\end{align}
For $Q\le \frac{1}{4}P^{\frac{Rd}{R+1}}$, one has
\begin{align}\label{linkQP}Q^{\frac{R+1}{2^{d+2}dR^2}}\le P^{\frac{1}{2^{d+2}R}}.\end{align}
Now \eqref{boundE2} follows from \eqref{boundE3} and \eqref{linkQP} when $P^{1/4}\le Q\le \frac{1}{4}P^{\frac{Rd}{R+1}}$.
 \end{proof}

\vskip3mm

\section{The minor arcs estimate}

Suppose that $\mathbf{F}=(F_1,\ldots,F_R)$  are forms of degree $d$ in $n$ variables. Let $$\mathbf{x}=(\mathbf{y},\mathbf{z},\mathbf{w}),$$
 where $\mathbf{y}\in \N^m, \mathbf{z}\in \N^{s}, \mathbf{w}\in \N^t$ and
 $$m+s+t=n.$$
  Then each $F_i$ can be uniquely represented as
\begin{align}\label{representFintofgh2}F_i(\mathbf{y},\mathbf{z},\mathbf{w})=f_i(\mathbf{y})+g_i(\mathbf{y},\mathbf{z})
+h_i(\mathbf{y},\mathbf{z},\mathbf{w}),\end{align}
where the degree of $g_i$ as a polynomial of $\mathbf{y}$ is smaller than $d$ and the degree of $h_i$ as a polynomial of $(\mathbf{y},\mathbf{z})$ is also smaller than $d$, that is,
\begin{align*}\deg_{\mathbf{y}}(g_i)<d \ \textrm{ and } \ \deg_{(\mathbf{y},\mathbf{z})}(h_i)<d.\end{align*}
Note that \eqref{representFintofgh2} is the same as \eqref{representFintofgh1}. Furthermore, each $h_i$ can be uniquely represented as
\begin{align}\label{representhintoGH}h_i(\mathbf{y},\mathbf{z},\mathbf{w})=G_i(\mathbf{y},\mathbf{z},\mathbf{w})
+H_i(\mathbf{w}),\end{align}
where the degree of $G_i$ as a polynomial of $\mathbf{w}$ is smaller than $d$ and $H_i$ is a form of $\mathbf{w}$.
We write
\begin{align*}\mathbf{f}=(f_1,\ldots,f_R),\ \mathbf{g}=(g_1,\ldots,g_R)\  \textrm{ and } \ \mathbf{H}=(H_1,\ldots,H_R).\end{align*}

In the next result, we consider the integration of $S_{\mathbf{F}}(\boldsymbol{\alpha})$ over minor arcs, where
 the generating function $S_{\mathbf{F}}(\boldsymbol{\alpha})$ is given in \eqref{definegeneratingS} and from now on we choose
\begin{align}\label{chooselambda} \lambda_{j}(x)=\begin{cases}\Lambda(x)\ &\textrm{ if }\ b_j'P<x\le b_j''P,
\\ 0\ &\textrm{ otherwise}.\end{cases}
\end{align}

\begin{lemma}\label{lemmaoverminor1}
Let $\mathbf{F}=(F_1,\ldots,F_R)$ be forms of degree $d$ in $n$ variables with the unique representation as in \eqref{representFintofgh2} and \eqref{representhintoGH}. Let $L<\frac{1}{4}P^{\frac{Rd}{R+1}}$.
Let $\mathfrak{m}(L)$ be given in \eqref{definemQ}.
Suppose that  (i)
\begin{align}\label{definekappa1}m-\dim V_{\mathbf{f}}^\ast \ge \kappa_1:=(2^{d+2}dR^2+1)\cdot2^{d-1}(d-1)R(R+1)+1,\end{align}
 (ii)
 \begin{align}\label{definekappa2}m+s-\dim V_{\mathbf{g}}^\ast \ge \kappa_2:=(2^{d+2}dR^2+3)\cdot2^{d-1}(d-1)R(R+1)+1\end{align}
  and (iii)
  \begin{align}\label{rankH=R}\rank(\mathbf{H})=R.\end{align}
Then there exists a positive number $\delta=\delta_{d,R}>0$ such that
\begin{align*}\int_{\mathfrak{m}(L)}S_\mathbf{F}(\boldsymbol{\alpha})d\boldsymbol{\alpha} \ll P^{n-Rd}(\log P)^{7n}L^{-\delta}.\end{align*}
\end{lemma}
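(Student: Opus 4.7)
The plan is to combine Proposition~\ref{prop} with the prime exponential-sum bound of Lemma~\ref{lemmaboundE2}, applied to a dyadic decomposition of $\mathfrak{m}(L)$. Set $\omega := (R+1)/(2^{d+2}dR^2)$. Fixing $Q_0$ slightly below $\tfrac{1}{4}P^{Rd/(R+1)}$, decompose
$$\mathfrak{m}(L) = \mathfrak{m}(Q_0) \ \cup \bigcup_{\substack{Q = 2^{j}L\\ L \le Q \le Q_0}} \mathfrak{n}_Q,\qquad \mathfrak{n}_Q := \mathfrak{M}(2Q)\setminus \mathfrak{M}(Q),$$
noting $|\mathfrak{n}_Q|\ll Q^{R+1}P^{-Rd}$, $\mathfrak{n}_Q\subseteq \mathfrak{m}(Q)$, and that there are $O(\log P)$ dyadic pieces.

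For each piece, the key input is a nontrivial bound on $\sup(\mathcal{E})$. Using the further splitting $\mathbf{h} = \mathbf{G} + \mathbf{H}$ of~\eqref{representhintoGH}, I rewrite
$$\mathcal{E}_{\mathbf{y},\mathbf{z}}(\boldsymbol{\alpha}) = \sum_{\mathbf{w}}\lambda(\mathbf{w})\,e\bigl(\boldsymbol{\alpha}\cdot\mathbf{H}(\mathbf{w}) + \boldsymbol{\alpha}\cdot \mathbf{G}(\mathbf{y},\mathbf{z},\mathbf{w})\bigr),$$
which is exactly of the shape~\eqref{defineE} with top-degree forms $\mathbf{H}$ and lower-degree perturbation $\mathbf{G}(\mathbf{y},\mathbf{z},\cdot)$. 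The hypothesis~\eqref{rankH=R} and the fact that the implied constant in Lemma~\ref{lemmaboundE2} depends only on the top forms give
$$\sup(\mathcal{E}) \ll P^{t}(\log P)^{7t}\, Q^{-\omega+\varepsilon}$$
uniformly in $(\mathbf{y},\mathbf{z})$ and $\boldsymbol{\alpha}\in\mathfrak{n}_Q \subseteq \mathfrak{m}(Q)$.

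Inserting this bound into Proposition~\ref{prop} with $\mathfrak{n} = \mathfrak{n}_Q$, and choosing $X = Q^{c}$ with $c = \omega/(R+1) - \eta$ for some small $\eta = \eta(d,R) > 0$, the three terms on the right of Proposition~\ref{prop} reduce to $P^{n-Rd}(\log P)^{7n}$ times $Q$ raised respectively to
$$c\Bigl(\tfrac{R+1}{2} - \tfrac{\kappa_1}{2^{d}(d-1)R}\Bigr) + \tfrac{R+1}{2},\quad c\Bigl(\tfrac{3(R+1)}{4} - \tfrac{\kappa_2}{2^{d+1}(d-1)R}\Bigr) + \tfrac{R+1}{4},\quad c(R+1) - \omega.$$
The third exponent equals $-\eta(R+1) < 0$; the first two are negative precisely when
$$\kappa_1 > 2^{d-1}(d-1)R(R+1)\bigl(1 + 1/c\bigr),\qquad \kappa_2 > 2^{d-1}(d-1)R(R+1)\bigl(3 + 1/c\bigr).$$
As $\eta \downarrow 0$, $1/c \downarrow (R+1)/\omega = 2^{d+2}dR^2$, and these right-hand sides decrease to $2^{d-1}(d-1)R(R+1)(2^{d+2}dR^2+1)$ and $2^{d-1}(d-1)R(R+1)(2^{d+2}dR^2+3)$, each exactly one less than the assumed values~\eqref{definekappa1},~\eqref{definekappa2}. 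Hence $\eta$ can be taken positive and fixed, yielding $\delta' = \delta'(d,R) > 0$ such that each piece contributes $\ll P^{n-Rd}(\log P)^{7n} Q^{-\delta'}$. Summing the geometric series over $O(\log P)$ scales, together with the identical estimate on $\mathfrak{m}(Q_0)$ (with $Q_0$ playing the role of $Q$), yields the claimed bound.

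The main obstacle is the arithmetic tuning of $\kappa_1, \kappa_2$: no single choice of $X$ simultaneously optimizes all three terms of Proposition~\ref{prop}, and in particular the heuristic $X = Q^{\omega/(2(R+1))}$ hinted at in the remark after Proposition~\ref{prop} would require $\kappa_i$ roughly twice as large. The correct optimization takes $c$ strictly inside $(0, \omega/(R+1))$ and arbitrarily close to the upper endpoint; the precise constants in~\eqref{definekappa1},~\eqref{definekappa2} are engineered to leave a sliver of slack absorbing the perturbation $\eta > 0$ and thereby producing a quantitative $\delta > 0$.
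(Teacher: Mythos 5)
Your proposal is correct and follows essentially the same route as the paper: dyadic decomposition of $\mathfrak{m}(L)$ into annuli $\mathfrak{M}(2Q)\setminus\mathfrak{M}(Q)$ plus a top piece, then Proposition~\ref{prop} fed by the uniform bound $\sup(\mathcal{E})\ll P^t(\log P)^{7t}Q^{-\omega+\varepsilon}$ from Lemma~\ref{lemmaboundE2}, with $X$ chosen just below $Q^{\omega/(R+1)}$ so that the third term is barely negative while $\kappa_1,\kappa_2$ from~\eqref{definekappa1},\eqref{definekappa2} exceed the resulting thresholds by one. The only cosmetic difference is parametrization — you take $X=Q^{\omega/(R+1)-\eta}$ while the paper takes $X=Q^{1/(2^{d+2}dR^2+\eta)}$ — which is the same limit; your observation about the cruder $X=Q^{\omega/(2(R+1))}$ hinted at in the remark is also accurate.
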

\begin{proof}
We define
\begin{align*}\mathfrak{n}(Q)=\begin{cases}\mathfrak{M}(2Q)\setminus \mathfrak{M}(Q),\ &\textrm{ if }\ Q<\frac{1}{4}P^{\frac{Rd}{R+1}},
\\ \mathfrak{m}(Q),\ & \textrm{ if }\ Q=\frac{1}{4}P^{\frac{Rd}{R+1}}.\end{cases}\end{align*}
By the dyadic argument, we only need to establish
\begin{align*}\int_{\mathfrak{n}}S_F(\boldsymbol{\alpha})d\boldsymbol{\alpha} \ll P^{n-Rd}(\log P)^{7n}Q^{-\delta}\end{align*}
for $L\le Q\le \frac{1}{4}P^{\frac{Rd}{R+1}}$ and $\mathfrak{n}\subseteq \mathfrak{n}(Q)$.

Note that for $\mathfrak{n}\subseteq \mathfrak{n}(Q)$, one has $|\mathfrak{n}|\ll Q^{R+1}P^{-Rd}$. On applying Proposition \ref{prop} and Lemma \ref{lemmaboundE2}, we obtain
\begin{align*}\int_{\mathfrak{n}}S_\mathbf{F}(\boldsymbol{\alpha})d\boldsymbol{\alpha} \ll\, &P^{n-Rd}(\log P)^{2n}X^{\frac{1}{2}(R+1)-\frac{\kappa_1}{2^{d}(d-1)R}}Q^{\frac{1}{2}(R+1)}
\\ &\ \ +P^{n-Rd}(\log P)^{2n}X^{\frac{3}{4}(R+1)-\frac{\kappa_2}{2^{d+1}(d-1)R}}Q^{\frac{1}{4}(R+1)}
\\ &\ \ +
P^{n-Rd}(\log P)^{7n}X^{R+1}Q^{-\frac{R+1}{2^{d+2}dR^2}+\varepsilon}.\end{align*}
We choose
$$X=Q^{\frac{1}{2^{d+2}dR^2+\eta}},$$
where $\eta=\eta_{d,R}>0$ is a sufficiently small positive real number depending on $d$ and $R$. We write
$$\Delta_0=\frac{1}{2^{d+2}dR^2+\eta}\cdot \frac{1}{2^{d}(d-1)R}.$$
Then we obtain
\begin{align*}\int_{\mathfrak{n}}S_\mathbf{F}(\boldsymbol{\alpha})d\boldsymbol{\alpha} \ll\, &P^{n-Rd}(\log P)^{7n}\big(Q^{-\Delta_0\cdot \Delta_1} +Q^{-\frac{1}{2}\Delta_0\cdot \Delta_2}+Q^{-\Delta_3+\varepsilon}\big),\end{align*}
where
\begin{align*}\Delta_1=\kappa_1-(2^{d+2}dR^2+\eta+1)\cdot 2^{d-1}(d-1)R(R+1),\end{align*}
\begin{align*}\Delta_2=\kappa_2-(2^{d+2}dR^2+\eta+3)\cdot2^{d-1}(d-1)R(R+1)\end{align*}
and
\begin{align*}\Delta_3=\frac{R+1}{2^{d+2}dR^2}-\frac{R+1}{2^{d+2}dR^2+\eta}.\end{align*}

Given the values of $\kappa_1$ in \eqref{definekappa1} and $\kappa_2$ in \eqref{definekappa2},  all of $\Delta_1,\Delta_2$ and $\Delta_3$ are positive
 provided that
\begin{align*}\eta<\frac{1}{2^{d-1}(d-1)R(R+1)}.\end{align*} Therefore, there exists $\delta_{d,R}>0$ such that
\begin{align*}\int_{\mathfrak{n}}S_F(\boldsymbol{\alpha})d\boldsymbol{\alpha}  \ll\,
P^{n-Rd}(\log P)^{7n}Q^{-\delta_{d,R}}.\end{align*}
This completes the proof.
\end{proof}

Lemma \ref{lemmaoverminor1} provides an acceptable upper bound for the mean value of $S_F(\boldsymbol{\alpha})$ over minor arcs by assuming some (unsatisfactory) technical conditions in \eqref{representFintofgh2}, \eqref{representhintoGH}, \eqref{definekappa1}, \eqref{definekappa2} and \eqref{rankH=R}. The purpose of the next section is to remove these technical conditions.
 \vskip3mm

\section{Geometric considerations}

We introduce the codimension of the singular locus (in the sense of Birch)
\begin{align}\label{definecodim}\codim V_{\mathbf{F}}^\ast=n-\dim V_{\mathbf{F}}^\ast,\end{align}
where $\mathbf{F}=\mathbf{F}(x_1,\ldots,x_n)$ is a system of forms in $n$ variables.
\begin{lemma}\label{lemmaincrease}Let $F_1,\ldots,F_R$ be forms in $\mathbf{x}=(x_1,\ldots,x_n)$, and let $\mathbf{F}=(F_1,\ldots,F_R)$. Let
$\mathbf{f}=\mathbf{f}(\mathbf{y})$ be a system of forms in $\mathbf{y}=(y_1,\ldots,y_{n-1})$ defined to be
$$\mathbf{f}(y_1,\ldots,y_{n-1})=\mathbf{F}(y_1,\ldots,y_{n-1},0).$$
Then we have
$$\codim V_{\mathbf{f}}^\ast\ge \codim V_{\mathbf{F}}^\ast-R-1.$$\end{lemma}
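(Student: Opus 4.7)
The plan is to bridge $V_{\mathbf{f}}^\ast$ and $V_{\mathbf{F}}^\ast$ through an incidence correspondence which linearises the rank-drop condition. For a system $\mathbf{G}$ of $R$ forms in $N$ variables, set
$$\tilde{V}_{\mathbf{G}} = \bigl\{(\mathbf{x},[\mathbf{v}])\in\A^N\times\P^{R-1} : \mathbf{v}\cdot J_{\mathbf{G}}(\mathbf{x})=0\bigr\}.$$
The projection to $\A^N$ has image exactly $V_{\mathbf{G}}^\ast$, and its fibre over $\mathbf{x}\in V_{\mathbf{G}}^\ast$ is the projectivised kernel of $J_{\mathbf{G}}(\mathbf{x})^T$, of dimension $R-1-\rank J_{\mathbf{G}}(\mathbf{x})\le R-1$. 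Applied with $\mathbf{G}=\mathbf{F}$ on $\A^n$ this yields $\dim\tilde{V}_{\mathbf{F}}\le\dim V_{\mathbf{F}}^\ast+(R-1)$, and applied with $\mathbf{G}=\mathbf{f}$ on $\A^{n-1}$ it gives $\dim V_{\mathbf{f}}^\ast\le\dim\tilde{V}_{\mathbf{f}}$ (since every fibre is nonempty).

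Next I would compare $\tilde{V}_{\mathbf{f}}$ with the slice $W:=\tilde{V}_{\mathbf{F}}\cap(\{x_n=0\}\times\P^{R-1})$. Identifying $\{x_n=0\}$ with $\A^{n-1}$ and writing $L_i(\mathbf{y}):=\frac{\partial F_i}{\partial x_n}(\mathbf{y},0)$, the condition $\mathbf{v}\cdot J_{\mathbf{F}}(\mathbf{y},0)=0$ splits into $\mathbf{v}\cdot J_{\mathbf{f}}(\mathbf{y})=0$ (contributions of the first $n-1$ columns) together with the single bihomogeneous equation
$$\sum_{i=1}^R v_iL_i(\mathbf{y})=0 \qquad(\text{the column } n\text{ contribution}).$$
Thus $W$ sits inside $\tilde{V}_{\mathbf{f}}$ as the locus of this extra equation, so Krull's principal ideal theorem applied component by component on $\A^{n-1}\times\P^{R-1}$ gives $\dim\tilde{V}_{\mathbf{f}}\le\dim W+1$. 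On the other hand $W\subseteq\tilde{V}_{\mathbf{F}}$, so $\dim W\le\dim\tilde{V}_{\mathbf{F}}$.

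Chaining these inequalities yields
$$\dim V_{\mathbf{f}}^\ast\le\dim\tilde{V}_{\mathbf{f}}\le\dim W+1\le\dim\tilde{V}_{\mathbf{F}}+1\le\dim V_{\mathbf{F}}^\ast+R,$$
which, using \eqref{definecodim}, translates into $\codim V_{\mathbf{f}}^\ast\ge\codim V_{\mathbf{F}}^\ast-R-1$, as required. The step deserving most care will be the slicing inequality $\dim\tilde{V}_{\mathbf{f}}\le\dim W+1$: one must check that the bihomogeneous equation $\sum v_iL_i(\mathbf{y})=0$ cuts dimension by at most one even when $\tilde{V}_{\mathbf{f}}$ is reducible, or when the equation vanishes identically on some component (in which case that component is already contained in $W$). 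Both cases are handled by the standard componentwise application of Krull's theorem, and the remaining inequalities are routine dimension counts for the two incidence-variety projections.
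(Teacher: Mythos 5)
Your incidence-correspondence argument is a valid self-contained route to the inequality and lands on the correct constant $R+1$; note that the paper offers no written argument of its own here, only the citation ``This is Lemma 3 in \cite{CM}'' together with the remark that the constant there should be $R+1$ rather than $R$, so there is no proof in the paper to compare against line by line. Your approach --- bounding $\dim\tilde{V}_{\mathbf{F}}$ above by the maximal fibre dimension of the projection to $\A^n$, bounding $\dim V_{\mathbf{f}}^\ast$ below by surjectivity of the projection from $\tilde{V}_{\mathbf{f}}$, and passing between $\tilde{V}_{\mathbf{f}}$ and the slice $W\subseteq\tilde{V}_{\mathbf{F}}$ by one bihomogeneous equation --- is the natural one and is consonant with the dimension counts the paper itself performs in the proof of Lemma~\ref{lemmakey} via Proposition 7.1 of Chapter I of \cite{Hbook}.

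The one genuine gap is in the slicing step $\dim\tilde{V}_{\mathbf{f}}\le\dim W+1$, which is not an automatic consequence of Krull applied componentwise. For an irreducible component $Z$ of $\tilde{V}_{\mathbf{f}}$ on which $\sum_i v_iL_i(\mathbf{y})$ does not vanish identically, Krull gives that every component of $Z\cap W$ has codimension one in $Z$ \emph{provided $Z\cap W$ is nonempty}; you flag the cases ``$\tilde{V}_{\mathbf{f}}$ reducible'' and ``the equation vanishes identically on $Z$'' but not the remaining possibility $Z\cap W=\emptyset$, which in an ambient $\A^{n-1}\times\P^{R-1}$ Krull alone does not exclude. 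It is excluded here by the biconical structure: every entry of $J_{\mathbf{f}}(\mathbf{y})$ is a form of degree $d-1\ge 1$ in $\mathbf{y}$, so $\tilde{V}_{\mathbf{f}}$ is stable under $\mathbf{y}\mapsto\lambda\mathbf{y}$, hence (the scaling family being connected) so is each irreducible component $Z$, and letting $\lambda\to 0$ shows $Z$ contains a point $(\mathbf{0},[\mathbf{v}])$. Since each $L_i$ is likewise a form of degree $d-1\ge 1$, one has $L_i(\mathbf{0})=0$, so $(\mathbf{0},[\mathbf{v}])\in W$ and $Z\cap W\ne\emptyset$. With this observation supplied, your chain $\dim V_{\mathbf{f}}^\ast\le\dim\tilde{V}_{\mathbf{f}}\le\dim W+1\le\dim\tilde{V}_{\mathbf{F}}+1\le\dim V_{\mathbf{F}}^\ast+R$ is correct and the lemma follows.
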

\begin{proof}This is Lemma 3 in \cite{CM}. Cook and Magyar provided the inequality $\codim V_{\mathbf{f}}^\ast\ge \codim V_{\mathbf{F}}^\ast-R$ due to an oversight. Their proof in fact implies $\codim V_{\mathbf{f}}^\ast\ge \codim V_{\mathbf{F}}^\ast-R-1$.
\end{proof}

Let
\begin{align}\label{kappa1}\kappa_1=(2^{d+2}dR^2+1)\cdot2^{d-1}(d-1)R(R+1)+1\end{align}
and
 \begin{align}\label{kappa2} \kappa_2=(2^{d+2}dR^2+3)\cdot2^{d-1}(d-1)R(R+1)+1.\end{align}
They coincide with those in Lemma \ref{lemmaoverminor1}. Let
 \begin{align}\label{kappa31} \kappa_3=(R+1)\kappa_2+\kappa_1+dR(R+1)^2+2R(R+1)+R.\end{align}
We point out that (for $d\ge 2$ and $R\ge 1$)
 \begin{align}\label{kappa33} 4^{d+2}d^2R^5\ge \kappa_3.\end{align}

\begin{lemma}\label{lemmakey}Let $\mathbf{F}=(F_1,\ldots,F_R)$ be forms of degree $d$ in $n$ variables. Let $\kappa_1$ and $\kappa_2$ be given in \eqref{kappa1} and \eqref{kappa2}. Suppose that $F_1,\ldots,F_R$ is a non-singular system and that
\begin{align*}n\ge 4^{d+2}d^2R^5.\end{align*}
Then up to the permutation of variables, $\mathbf{F}$ can be represented in the form \eqref{representFintofgh2} and \eqref{representhintoGH} such that (i) $m-\dim V_{\mathbf{f}}^\ast \ge \kappa_1$,  (ii) $m+s-\dim V_{\mathbf{g}}^\ast \ge \kappa_2$
  and (iii) $\rank(\mathbf{H})=R$.
\end{lemma}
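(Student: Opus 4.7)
The plan is to build the partition $(\mathbf{y}, \mathbf{z}, \mathbf{w})$ in three stages, starting from a global codimension bound that follows from non-singularity. First I would establish that non-singularity of $\mathbf{F}$ yields $\codim V_{\mathbf{F}}^\ast \ge n - R$: the projectivisations of $V_{\mathbf{F}}^\ast$ and $V_{\mathbf{F}}$ are disjoint in $\mathbb{P}^{n-1}$ by hypothesis (their common affine points lie in $\{\mathbf{0}\}$), while $V_{\mathbf{F}}$ is cut out by $R$ forms and so has projective dimension at least $n-1-R$. The projective dimension theorem (two subvarieties of $\mathbb{P}^{n-1}$ whose dimensions sum to at least $n-1$ must meet) then forces $\dim V_{\mathbf{F}}^\ast \le R$.

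Next, I would dispatch condition (iii). Non-singularity forces $F_1, \ldots, F_R$ to be linearly independent (otherwise a constant row dependence of $J_{\mathbf{F}}$ contradicts $\rank = R$ at some nonzero point of $V_{\mathbf{F}}$), so there exist $R$ degree-$d$ monomials $M_1, \ldots, M_R$ whose coefficient vectors in $(F_1, \ldots, F_R)$ are linearly independent. These collectively involve at most $Rd$ variables, and taking $\mathbf{w}$ to be precisely these variables gives $t \le Rd$ and $\rank(\mathbf{H}) = R$, since $\mathbf{H}(\mathbf{w})$ retains $M_1, \ldots, M_R$ with their original coefficients. Condition (i) then follows from iterating Lemma \ref{lemmaincrease}: setting the $s+t$ variables of $\mathbf{z}$ and $\mathbf{w}$ to zero costs at most $R+1$ per step, so $\codim V_{\mathbf{f}}^\ast \ge n - R - (s+t)(R+1)$, which exceeds $\kappa_1$ once $s+t \le (n-R-\kappa_1)/(R+1)$.

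The main obstacle is condition (ii), since $\mathbf{g}(\mathbf{y}, \mathbf{z}) = \mathbf{F}(\mathbf{y}, \mathbf{z}, \mathbf{0}) - \mathbf{F}(\mathbf{y}, \mathbf{0}, \mathbf{0})$ is a difference of restrictions of $\mathbf{F}$ rather than a restriction itself, so Lemma \ref{lemmaincrease} does not apply directly. My strategy is to apply the projective intersection argument of the first paragraph to $\mathbf{g}$ in its own right: if one arranges that $\mathbf{g}$ is non-singular in Birch's sense as a system of $R$ forms in $m+s$ variables, then the same argument yields $\codim V_{\mathbf{g}}^\ast \ge (m+s) - R$, which exceeds $\kappa_2$ once $m+s \ge \kappa_2 + R$. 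Since $\{\mathbf{z} = \mathbf{0}\} \subseteq V_{\mathbf{g}}$ automatically, the binding requirement for non-singularity of $\mathbf{g}$ is $\rank \partial_{\mathbf{z}} \mathbf{F}(\mathbf{y}, \mathbf{0}, \mathbf{0}) = R$ for every nonzero $\mathbf{y}$, which I would secure by an analogous rank-$R$ selection applied to the matrix of partial derivatives $\partial F_i / \partial x_j$; the witnessing variables go into $\mathbf{z}$. The remaining bookkeeping to show $V_{\mathbf{g}}^\ast \cap V_{\mathbf{g}} \subseteq \{\mathbf{0}\}$ off $\{\mathbf{z} = \mathbf{0}\}$ is delicate but uses only Lemma \ref{lemmaincrease} together with the codimension budget provided by $n \ge \kappa_3$, whose value in \eqref{kappa31} is precisely calibrated to absorb the losses $\kappa_1$ and $\kappa_2 + R$, the $(R+1)$-factor costs of variable elimination, and the at most $Rd$ variables devoted to $\mathbf{w}$.
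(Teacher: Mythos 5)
Your handling of conditions (iii) and (i) matches the paper's, but your plan for condition (ii) has a genuine gap that cannot be patched by the bookkeeping you defer.

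The problem is that your ``binding requirement'' $\rank\,\partial_{\mathbf z}\mathbf F(\mathbf y,\mathbf 0,\mathbf 0)=R$ for every nonzero $\mathbf y\in\C^m$ is simply not achievable in general, no matter which variables you place in $\mathbf z$. Take the diagonal example $F(\mathbf x)=x_1^d+\cdots+x_n^d$ (say $R=1$, $d\ge 2$). Here $\partial F/\partial x_j(\mathbf x)=dx_j^{d-1}$, so $\partial_{\mathbf z}F(\mathbf y,\mathbf 0,\mathbf 0)=\mathbf 0$ identically in $\mathbf y$ regardless of how the variables are partitioned. In fact, every monomial of $g_i$ involves at least one $\mathbf z$-coordinate (this is exactly the constraint $\deg_{\mathbf y}g_i<d$), so $\partial g_i/\partial y_j$ also involves at least one $\mathbf z$-coordinate and vanishes at $\mathbf z=\mathbf 0$; the only surviving entries of $J_{\mathbf g}(\mathbf y,\mathbf 0)$ are the $\partial g_i/\partial z_j(\mathbf y,\mathbf 0)$, which are nonzero only if $\mathbf F$ has cross terms of the shape $y_{j_1}\cdots y_{j_{d-1}}z_j$. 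Diagonal forms have none, so $\{\mathbf z=\mathbf 0\}\subseteq V_{\mathbf g}^\ast$ and $\dim V_{\mathbf g}^\ast\ge m$. Consequently $\mathbf g$ cannot be made a non-singular system whenever $m\ge 1$, so the projective-intersection argument does not apply to $\mathbf g$, and the bound $\codim V_{\mathbf g}^\ast\ge (m+s)-R$ is false in general. Your ``rank-$R$ selection applied to the matrix of partial derivatives'' also does not rescue this: choosing $s$ columns so that the $R\times s$ polynomial matrix has entries spanning an $R$-dimensional space is very different from forcing the numerical matrix $(\partial F_i/\partial z_j(\mathbf y,\mathbf 0,\mathbf 0))$ to have rank $R$ for \emph{every} nonzero $\mathbf y$, and for diagonal forms there is nothing to select at all.

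What the paper does instead is bound the dimension of the ``partial'' singular locus $Y=\{\mathbf x:\ \rank(\partial F_i/\partial x_j)_{1\le i\le R,\ m+1\le j\le n}<R\}$ directly, without any non-singularity claim for $\mathbf g$ or $\mathbf L$. The key estimate is $\dim Y\le m+2R$, obtained by fibering $Y$ over $\P_{\C}^{R-1}$ via the vector of rank-relation coefficients $\boldsymbol\beta$: for each $\boldsymbol\beta$ the fiber $Y_{\boldsymbol\beta}$ is obtained from $V_{\boldsymbol\beta}\subseteq V_{\mathbf F}^\ast$ by dropping $m$ of the defining equations (so $\dim Y_{\boldsymbol\beta}\le\dim V_{\boldsymbol\beta}+m\le m+R$), and a morphism/fiber-dimension argument then adds at most $R$. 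Writing $\mathbf L=\mathbf F-\mathbf f$ and observing $V_{\mathbf L}^\ast\subseteq Y$ gives $\codim V_{\mathbf L}^\ast\ge n-m-2R$; since $\mathbf g$ \emph{is} a restriction of $\mathbf L$ (to $\mathbf w=\mathbf 0$), Lemma \ref{lemmaincrease} then applies and steps down to $\codim V_{\mathbf g}^\ast\ge\kappa_2$. This $Y$-bound via the projective fibration is the missing ingredient in your proposal; without it, condition (ii) does not follow.
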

\begin{proof}In fact, we shall prove the desired conclusion subject to the weaker condition
\begin{align}\label{conditioncodimF}n\ge \kappa_3,\end{align}
and this completes the proof the lemma in view of \eqref{kappa33}. We use $4^{d+2}d^2R^5$ instead of $\kappa_3$ in order to make the statement clean.

We have $\rank(\mathbf{F})=R$, and we can choose $R$ coordinates of $\mathbf{F}$ such that the corresponding coefficients matrix $(F_{\mathbf{j}_k}^{(i)})_{1\le i,k\le R}$ is invertible. Note that the coordinate of $\mathbf{F}$ is as in \eqref{mathbfj}, and each coordinate involves at most $d$ variables.
Let
$$t=dR.$$
 Therefore, up to the permutation of variables, we may assume that
 $$\rank(\mathbf{H})=R,$$
 where
 $$\mathbf{H}(x_{n-t+1},\ldots,x_{n})=\mathbf{F}(0,\ldots,0,x_{n-t+1},\ldots,x_{n})$$
 is a system of forms in $t$ variables.

Let
\begin{align}\label{definem}m=n-2R-dR(R+1)-\kappa_2\ \ \textrm{ and }\ \ s=n-m-t.\end{align}
We define
\begin{align*}Y=\{\mathbf{x}\in \A^n:\ \rank\big(\frac{\partial F_i}{\partial x_j}(\mathbf{x})\big)_{\substack{1\le i\le R \\ m+1\le j\le n}}<R\}.\end{align*}

We introduce $n$-dimensional vectors
$$\gamma_i(\mathbf{x})=\nabla F_i=(\frac{\partial F_i}{\partial x_1}(\mathbf{x}), \ldots, \frac{\partial F_i}{\partial x_n}(\mathbf{x}))$$
and $(n-m)$-dimensional vectors
$$\eta_i(\mathbf{x})=(\frac{\partial F_i}{\partial x_{m+1}}(\mathbf{x}), \ldots, \frac{\partial F_i}{\partial x_n}(\mathbf{x})).$$
Then for any $\boldsymbol{\beta}=(\beta_1,\cdots,\beta_R)\in \P_{\C}^{R-1}$, we introduce
$$V_{\boldsymbol{\beta}}=\{\mathbf{x}\in \A^n:\ \sum_{i=1}^R\beta_i\gamma_i(\mathbf{x})=\mathbf{0}\}$$
and
$$Y_{\boldsymbol{\beta}}=\{\mathbf{x}\in \A^n:\ \sum_{i=1}^R\beta_i\eta_i(\mathbf{x})=\mathbf{0}\}.$$
Here $\P_{\C}^{r}$ denotes the projective space. By Proposition 7.1 in Chapter I \cite{Hbook}, one has
\begin{align}\label{dimVYbeta}\dim Y_{\boldsymbol{\beta}} \le \dim V_{\boldsymbol{\beta}}+m.\end{align}

Since $\mathbf{F}$ is a non-singular system, we have
\begin{align}\label{dimVFast}\dim V_{\mathbf{F}}^\ast \le R,\end{align}
and by \eqref{definecodim},
\begin{align}\label{codimVF}\codim V_{\mathbf{F}}^\ast\ge n-R.\end{align}
Note that $V_{\boldsymbol{\beta}} \subseteq V_{\mathbf{F}}^\ast$. By \eqref{dimVYbeta} and \eqref{dimVFast}, we have
\begin{align}\label{dimYbeta}\dim Y_{\boldsymbol{\beta}} \le m+R \ \textrm{ for any }\ \boldsymbol{\beta}\in \P_{\C}^{R-1}.\end{align}

We claim that
\begin{align}\label{dimYandYbeta}\dim Y\le R+\max_{\boldsymbol{\beta}\in \P_{\C}^{R-1}}\dim Y_{\boldsymbol{\beta}}.\end{align}
Let $U$ be an (arbitrary) irreducible component of $Y$, and let $U_{\boldsymbol{\beta}}=U\cap Y_{\boldsymbol{\beta}}$. The claim \eqref{dimYandYbeta} follows from
\begin{align}\label{dimUandYbeta}\dim U\le R+\max_{\boldsymbol{\beta}\in \P_{\C}^{R-1}}\dim Y_{\boldsymbol{\beta}}.\end{align}

Let
$$r=\max\{\rank\big(\frac{\partial F_i}{\partial x_j}(\mathbf{x})\big)_{\substack{1\le i\le R \\ m+1\le j\le n}}:\ \mathbf{x}\in U\}.$$
By the definition of $Y$, one has $r<R$. If $r=0$, then $U=U_{\boldsymbol{\beta}}$ for all $\boldsymbol{\beta}\in \P_{\C}^{R-1}$.  In particular, \eqref{dimUandYbeta} follows immediately. Next we assume $r\ge 1$. Without loss of generality, we may assume that
$$\rank\big(\frac{\partial F_i}{\partial x_j}(\mathbf{x})\big)_{\substack{1\le i\le r \\ m+1\le j\le m+r}}=r$$
for some (nonzero) $\mathbf{x}\in U$. Then we introduce
$$U_0=\big\{\mathbf{x}\in U:\ \rank\big(\frac{\partial F_i}{\partial x_j}(\mathbf{x})\big)_{\substack{1\le i\le r \\ m+1\le j\le m+r}}=r\big\}.$$

For each $\mathbf{x}\in U_0$, there exists a unique $\boldsymbol{\alpha}\in \P_{\C}^{r}$ such that $\sum_{i=1}^{r+1}\alpha_i\eta_i(\mathbf{x})=\mathbf{0}$. This induces a morphism $\rho: U_0\rightarrow \P_{\C}^{r}$. In fact, $\rho$ is in the form $\rho=(u_1,\ldots,u_{r+1})$, where $u_1,\ldots,u_{r+1}$ are homogeneous polynomials of the same degree. Therefore, for any $\boldsymbol{\alpha}\in \rho(U_0)$, the fiber
$\rho^{-1}(\boldsymbol{\alpha})$ can be expressed as
$$\rho^{-1}(\boldsymbol{\alpha})=\{\mathbf{x}\in U_0:\ u_1(\mathbf{x})=\alpha_1,\ldots,u_{r+1}(\mathbf{x})=\alpha_{r+1}\}.$$
It follows from Proposition 7.1 in Chapter I \cite{Hbook} that
\begin{align}\label{dimY0alpha}\dim U_0 \le r+1+\dim \rho^{-1}(\boldsymbol{\alpha}).\end{align}
If we write $\boldsymbol{\beta}=(\alpha_1,\ldots,\alpha_{r+1},0,\ldots,0)\in \P_{\C}^{R-1}$, then
\begin{align}\label{alphabeta}\rho^{-1}(\boldsymbol{\alpha}) \subseteq Y_{\boldsymbol{\beta}}.\end{align}
We conclude  from \eqref{dimY0alpha} and \eqref{alphabeta} that
\begin{align}\label{dimU0}\dim U_0\le R+\max_{\boldsymbol{\beta}\in \P_{\C}^{R-1}}\dim Y_{\boldsymbol{\beta}}.\end{align}
Note that $U_0\not=\emptyset$ is open in $U$. By Proposition 1.10 in Chapter I \cite{Hbook}(see also Example 1.1.3 in Chapter I \cite{Hbook}), we have $\dim U=\dim U_0$, and \eqref{dimUandYbeta} follows from \eqref{dimU0}. Now the claim \eqref{dimYandYbeta} has been established.

It follows from \eqref{dimYbeta} and \eqref{dimYandYbeta} that
\begin{align}\label{dimY}\dim Y\le m+2R.\end{align}
We define the system of polynomials $\mathbf{f}(x_1,\ldots,x_m)$ in $m$ variables by
$$\mathbf{f}(x_1,\ldots,x_m)=
\mathbf{F}(x_1,\ldots,x_m,0,\ldots,0),$$
and define the system of polynomials $\mathbf{L}(x_1,\ldots,x_n)$ in $n$ variables by
$$\mathbf{L}(x_1,\ldots,x_n)=\mathbf{F}(x_1,\ldots,x_n)-\mathbf{F}(x_1,\ldots,x_m,0,\ldots,0).$$

We deduce from Lemma \ref{lemmaincrease} and \eqref{codimVF} that
\begin{align*}\codim V_{\mathbf{f}}^\ast\ge n-R-(n-m)(R+1),\end{align*}
and by \eqref{definem},
\begin{align*}\codim V_{\mathbf{f}}^\ast\ge n-R-\big(2R+dR(R+1)+\kappa_2\big)(R+1).\end{align*}
In particular, by \eqref{kappa31} and \eqref{conditioncodimF}, one has
\begin{align*}\codim V_{\mathbf{f}}^\ast\ge \kappa_1.\end{align*}

Note that $V_{\mathbf{L}}^\ast\subseteq Y$. By \eqref{definecodim} and \eqref{dimY}, one has
\begin{align*}\codim V_{\mathbf{L}}^\ast\ge n-(m+2R),\end{align*}
and by \eqref{definem},
\begin{align}\label{codimVL}\codim V_{\mathbf{L}}^\ast\ge \kappa_2+dR(R+1).\end{align}

We write $\mathbf{v}=(x_{1},\ldots,x_{n-t})$ and $\mathbf{w}=(x_{n-t+1},\ldots,x_{n})$. Then each $L_i$ can be uniquely represented as
\begin{align*}L_i(\mathbf{v},\mathbf{w})=g_i(\mathbf{v})
+G_i(\mathbf{v},\mathbf{w})+H_i(\mathbf{w}),\end{align*}
where $g_i$ is a form of $\mathbf{v}$, $H_i$ is a form of $\mathbf{w}$, the degree of $G_i$ as a polynomial of $\mathbf{v}$ and the degree of $G_i$ as a polynomial of $\mathbf{w}$ are both smaller than $d$, that is,
\begin{align*}\deg_{\mathbf{v}}(G_i)<d \ \textrm{ and } \ \deg_{\mathbf{w}}(G_i)<d.\end{align*}

We conclude from Lemma \ref{lemmaincrease} and \eqref{codimVL} that
\begin{align*}\codim (V_{\mathbf{g}}^\ast)\ge \kappa_2.\end{align*}
The proof of the lemma is complete.
\end{proof}

Now Lemma \ref{lemmaoverminor1} and Lemma \ref{lemmakey} together imply the following.
\begin{lemma}\label{lemmaoverminor2}
Let $\mathbf{F}=(F_1,\ldots,F_R)$ a system of forms of degree $d$ in $n$ variables. Let $L<\frac{1}{4}P^{\frac{Rd}{R+1}}$.
Let $\mathfrak{m}(L)$ be defined in \eqref{definemQ}. Suppose that $F_1,\ldots,F_R$ is a non-singular system and that
\begin{align*}n\ge 4^{d+2}d^2R^5.\end{align*}
Then there exists a positive number $\delta=\delta_{d,R}>0$ such that
\begin{align*}\int_{\mathfrak{m}(L)}S_\mathbf{F}(\boldsymbol{\alpha})d\boldsymbol{\alpha} \ll P^{n-Rd}(\log P)^{7n}L^{-\delta}.\end{align*}
\end{lemma}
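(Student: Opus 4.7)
The plan is to deduce Lemma \ref{lemmaoverminor2} immediately from the combination of Lemma \ref{lemmaoverminor1} (an analytic minor-arcs bound conditional on three structural properties of $\mathbf{F}$) and Lemma \ref{lemmakey} (a geometric statement guaranteeing those three properties under the hypotheses of the present lemma). The two lemmas are engineered to dovetail: the constants $\kappa_1,\kappa_2$ are numerically identical in \eqref{definekappa1}--\eqref{definekappa2} and \eqref{kappa1}--\eqref{kappa2}, and the rank condition $\rank(\mathbf{H})=R$ matches on the nose, so the output of the geometric lemma feeds verbatim into the hypotheses of the analytic one.

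Concretely, I would argue as follows. Given $\mathbf{F}$ non-singular with $n\ge 4^{d+2}d^2R^5$, Lemma \ref{lemmakey} supplies a permutation of the $n$ variables together with a partition $n=m+s+t$ and decompositions of the shape \eqref{representFintofgh2}--\eqref{representhintoGH} for which the three hypotheses (i)--(iii) of Lemma \ref{lemmaoverminor1} all hold simultaneously. Feeding this decomposition directly into Lemma \ref{lemmaoverminor1} then yields the asserted bound, with the constant $\delta=\delta_{d,R}>0$ inherited from that lemma.

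The only point worth noting is that the permutation of variables in Lemma \ref{lemmakey} does not alter the integral in question. Indeed, if $\sigma$ is the permutation and $\widetilde{\mathbf{F}}(\mathbf{x})=\mathbf{F}(\sigma\mathbf{x})$, then the change of summation variable $\mathbf{x}\mapsto\sigma\mathbf{x}$ in \eqref{definegeneratingS} shows that $S_{\mathbf{F}}(\boldsymbol{\alpha})$ equals the analogous exponential sum built from $\widetilde{\mathbf{F}}$ with relabelled weights $\widetilde{\lambda}_j(\cdot)=\lambda_{\sigma(j)}(\cdot)$. These still obey the uniform bound $|\widetilde{\lambda}_j(x)|\le\log x$, which is all that Lemma \ref{lemmaoverminor1} demands of the weights; hence the same bound applies to the permuted system, and therefore to $S_{\mathbf{F}}$ itself.

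There is no genuine obstacle to overcome in this step. The substantive difficulty has already been absorbed upstream: the exponential-sum and mean-value machinery of Sections~4--7 (culminating in Lemma \ref{lemmaoverminor1}), and the dimension-counting argument of Section~8 based on Lemma \ref{lemmaincrease} together with the fibrewise analysis of the locus $Y$ in the proof of Lemma \ref{lemmakey}. With those in hand, the proof of Lemma \ref{lemmaoverminor2} is merely the bookkeeping step that splices them together.
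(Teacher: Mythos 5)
Your proposal is exactly the argument the paper uses: the authors state that Lemma \ref{lemmaoverminor1} and Lemma \ref{lemmakey} together imply Lemma \ref{lemmaoverminor2} and give no further detail. Your observation that the permutation of variables merely relabels the weights $\lambda_j$, which still satisfy $|\lambda_j(x)|\le\log x$, is a sensible sanity check that the paper leaves implicit.
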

 \vskip3mm

\section{The singular series and the singular integral}

We define the multiple Gauss sum
\begin{align}\label{definegausssum}S^\ast(q,\mathbf{a}):=S_\mathbf{F}^\ast(q,\mathbf{a})=
\sum_{\substack{1\le \mathbf{b}\le q\\ (\mathbf{b},q)=1}}e\big(
\frac{\mathbf{a}\cdot \mathbf{F}(\mathbf{b})}{q}\big),\end{align}
and then we define
\begin{align}\label{defineAq}\mathcal{A}^\ast(q)=\sum_{\substack{1\le \mathbf{a}\le q\\ (a_1,\ldots,a_R,q)=1}}S^\ast(q,\mathbf{a}).\end{align}
\begin{lemma}\label{lemmamultiplicative}Suppose that $(q_1,q_2)=1$. Then we have
\begin{align*}\mathcal{A}^\ast(q_1q_2)=\mathcal{A}^\ast(q_1)\mathcal{A}^\ast(q_2).\end{align*}
\end{lemma}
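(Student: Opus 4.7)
The plan is to establish the multiplicativity by a standard Chinese Remainder Theorem (CRT) decomposition in both the inner variable $\mathbf{b}$ and the outer variable $\mathbf{a}$, exploiting the homogeneity of the forms $F_i$ in an essential way.

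First I would parametrise the inner summation. Writing $q=q_1q_2$ with $(q_1,q_2)=1$, every residue $\mathbf{b}\pmod{q}$ can be uniquely written as
$\mathbf{b}\equiv q_2\mathbf{u}_1+q_1\mathbf{u}_2\pmod{q_1q_2}$
with $\mathbf{u}_1$ ranging over residues modulo $q_1$ and $\mathbf{u}_2$ over residues modulo $q_2$. Since $(q_1,q_2)=1$, the coprimality condition $(\mathbf{b},q)=1$ splits as $(\mathbf{u}_1,q_1)=1$ and $(\mathbf{u}_2,q_2)=1$.

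Next, I would exploit homogeneity of degree $d$ to compute $\mathbf{F}(\mathbf{b})\pmod{q_1q_2}$. Expanding $\mathbf{F}(q_2\mathbf{u}_1+q_1\mathbf{u}_2)$ multilinearly, every cross term carries a factor $q_1^{k}q_2^{d-k}$ with $1\le k\le d-1$, which is divisible by $q_1q_2$. Only the pure terms survive, giving
\begin{align*}\mathbf{F}(q_2\mathbf{u}_1+q_1\mathbf{u}_2)\equiv q_2^d\mathbf{F}(\mathbf{u}_1)+q_1^d\mathbf{F}(\mathbf{u}_2)\pmod{q_1q_2}.\end{align*}
Dividing by $q_1q_2$ and reducing mod $1$, this yields the clean splitting
\begin{align*}\frac{\mathbf{a}\cdot\mathbf{F}(\mathbf{b})}{q_1q_2}\equiv \frac{q_2^{d-1}\mathbf{a}\cdot\mathbf{F}(\mathbf{u}_1)}{q_1}+\frac{q_1^{d-1}\mathbf{a}\cdot\mathbf{F}(\mathbf{u}_2)}{q_2}\pmod{1}.\end{align*}
Separating variables in the exponential then gives
\begin{align*}S^\ast(q_1q_2,\mathbf{a})=S^\ast(q_1,q_2^{d-1}\mathbf{a})\,S^\ast(q_2,q_1^{d-1}\mathbf{a}).\end{align*}

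Finally, I would apply CRT to the outer variable $\mathbf{a}$. Writing $\mathbf{a}\equiv\mathbf{a}_1\pmod{q_1}$ and $\mathbf{a}\equiv\mathbf{a}_2\pmod{q_2}$, the primitivity condition $(a_1,\ldots,a_R,q_1q_2)=1$ becomes $(a_{1,1},\ldots,a_{1,R},q_1)=1$ together with $(a_{2,1},\ldots,a_{2,R},q_2)=1$. Substituting into the factored expression for $S^\ast(q_1q_2,\mathbf{a})$ and noting that, since $(q_1,q_2)=1$, multiplication by $q_2^{d-1}$ (respectively $q_1^{d-1}$) permutes the primitive residue vectors modulo $q_1$ (respectively $q_2$), the double sum factorises as
\begin{align*}\mathcal{A}^\ast(q_1q_2)=\Bigl(\sum_{\substack{\mathbf{a}_1\pmod{q_1}\\(\mathbf{a}_1,q_1)=1}}S^\ast(q_1,\mathbf{a}_1)\Bigr)\Bigl(\sum_{\substack{\mathbf{a}_2\pmod{q_2}\\(\mathbf{a}_2,q_2)=1}}S^\ast(q_2,\mathbf{a}_2)\Bigr)=\mathcal{A}^\ast(q_1)\mathcal{A}^\ast(q_2).\end{align*}

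There is no genuine obstacle here; the only mildly subtle point is verifying that the cross terms in the binomial-type expansion of $\mathbf{F}(q_2\mathbf{u}_1+q_1\mathbf{u}_2)$ all vanish modulo $q_1q_2$, which relies crucially on $d\ge 2$ and on the forms being homogeneous of the \emph{same} degree.
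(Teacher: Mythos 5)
Your proof is correct and is precisely the ``standard argument'' the paper alludes to without writing out: CRT on $\mathbf{b}$, homogeneity to kill the cross terms, the factorisation $S^\ast(q_1q_2,\mathbf{a})=S^\ast(q_1,q_2^{d-1}\mathbf{a})\,S^\ast(q_2,q_1^{d-1}\mathbf{a})$, and then CRT on $\mathbf{a}$ together with the observation that multiplication by $q_2^{d-1}$ (resp.\ $q_1^{d-1}$) permutes the primitive residue vectors modulo $q_1$ (resp.\ $q_2$). One small quibble with your closing remark: the vanishing of the cross terms does \emph{not} rely on $d\ge 2$ or on the forms sharing a degree --- for any homogeneous polynomial of degree $d_i\ge 1$, every mixed term in the multinomial expansion of $F_i(q_2\mathbf{u}_1+q_1\mathbf{u}_2)$ carries a factor $q_1^{j}q_2^{d_i-j}$ with $1\le j\le d_i-1$ and is therefore divisible by $q_1q_2$; when $d_i=1$ there are simply no cross terms at all. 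What homogeneity does buy is the clean scalar twist by $q_2^{d-1}$ and $q_1^{d-1}$, and it is the coprimality $(q_1,q_2)=1$ (so these twists are units) that makes the final reindexing go through, not any lower bound on $d$.
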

\begin{proof}It can be proved by the standard argument.\end{proof}

In view of Lemma \ref{lemmamultiplicative}, in order to obtain the upper bound for $\mathcal{A}^\ast(q)$, we only need to study $S^\ast(p^r,\mathbf{a})$, where $p$ is a prime and $r\in \N$.

We use the notation
$$[1,n]:=\{1\le i\le n:\ i\in \N\}.$$
For a nonempty index set $I=\{i_1,\ldots,i_m\}\subseteq [1,n]$, we introduce the vector
\begin{align*}\mathbf{x}_I=(x_{i_1},\ldots,x_{i_m}).\end{align*}
\begin{lemma}\label{lemmaVgauss}Suppose that $p$ is a prime, $(a_1,,\ldots,a_R,p)=1$ and $r\ge 1$. Let $q=p^r$. If
\begin{align}\label{Gausssumcondition}n-\dim V^\ast_{\mathbf{F}} \ge  2^{d}d(R+1)(R+2),\end{align}
then
\begin{align*}S^\ast(q,\mathbf{a})\ll q^{n-2-R}.\end{align*}
\end{lemma}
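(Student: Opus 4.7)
The strategy is to remove the unit restriction $(\mathbf{b},q)=1$ via an additive-character completion on $\Z/p\Z$, reducing the task to a complete exponential sum to which Lemma \ref{lemmaboundS} applies directly. Since $q=p^r$, we have $(b,q)=1\iff (b,p)=1$, and the elementary identity
$$\mathbf{1}_{(b,p)=1}=\frac{p-1}{p}-\frac{1}{p}\sum_{k=1}^{p-1}e\!\left(\frac{kb}{p}\right)$$
allows us to expand $\prod_{i=1}^n \mathbf{1}_{(b_i,p)=1}$. Interchanging the resulting summations gives
$$S^\ast(q,\mathbf{a})=\sum_{S\subseteq[1,n]}\Bigl(\tfrac{p-1}{p}\Bigr)^{n-|S|}\Bigl(\tfrac{-1}{p}\Bigr)^{|S|}\sum_{\mathbf{k}\in\{1,\dots,p-1\}^{|S|}}T_{S,\mathbf{k}}(q,\mathbf{a}),$$
where
$$T_{S,\mathbf{k}}(q,\mathbf{a})=\sum_{1\le\mathbf{b}\le q}e\!\left(\frac{\mathbf{a}\cdot\mathbf{F}(\mathbf{b})+p^{r-1}\mathbf{k}\cdot\mathbf{b}_S}{q}\right).$$
Since $\bigl(\tfrac{p-1}{p}\bigr)^{n-|S|}\bigl(\tfrac{1}{p}\bigr)^{|S|}(p-1)^{|S|}\le 1$ for every $S$, we obtain $|S^\ast(q,\mathbf{a})|\le 2^n\max_{S,\mathbf{k}}|T_{S,\mathbf{k}}(q,\mathbf{a})|$, and it suffices to establish $|T_{S,\mathbf{k}}(q,\mathbf{a})|\ll q^{n-R-2}$ uniformly in $S$ and $\mathbf{k}$.

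I would then apply Lemma \ref{lemmaboundS} to $T_{S,\mathbf{k}}(q,\mathbf{a})$, with $P=q$, $\boldsymbol{\alpha}=\mathbf{a}/q$, lower-order term $G(\mathbf{b})=\mathbf{k}\cdot\mathbf{b}_S/p$ (a polynomial of degree $1<d$), $Q=q/4$, and $\kappa=2^d d(R+1)(R+2)$. The constraint $Q\le\tfrac14 P^{Rd/(R+1)}$ is immediate since $Rd/(R+1)\ge 1$. Alternative (iii) of the lemma reads $n-\dim V_{\mathbf{F}}^\ast<\kappa$, and is excluded by hypothesis. Alternative (ii) would produce integers $(a_1',\dots,a_R',q')$ with $q'\le q/4$ and $|q'a_i-a_i'q|\le Q q^{1-d}\le 1/4$ for every $i$; since the left-hand side is an integer, this forces $q'a_i=a_i'q$ for all $i$. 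Choosing an index $i_0$ with $p\nmid a_{i_0}$ (such an $i_0$ exists since $(\mathbf{a},p)=1$) gives $(a_{i_0},q)=1$, whence $q\mid q'a_{i_0}$ yields $q\mid q'$, contradicting $q'\le q/4$.

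Hence alternative (i) applies, delivering
$$|T_{S,\mathbf{k}}(q,\mathbf{a})|\ll q^n(\log q)^n\,Q^{-\kappa/(2^{d-1}(d-1)R)}\ll q^{n-2d(R+1)(R+2)/((d-1)R)}(\log q)^n.$$
The inequality $2d(R+1)(R+2)/((d-1)R)>R+2$ reduces to $2d(R+1)>(d-1)R$, i.e.\ to $dR+2d+R>0$, which always holds. Consequently there exists $\delta=\delta_{d,R}>0$ so that $|T_{S,\mathbf{k}}(q,\mathbf{a})|\ll q^{n-R-2-\delta}$, absorbing the logarithmic factor and completing the argument. The principal (and essentially only delicate) point in this plan is the verification that the hypothesis $(\mathbf{a},p)=1$ excludes alternative (ii) of Lemma \ref{lemmaboundS}; once this is settled, the rest is routine bookkeeping around the completion identity and the choice $\kappa=2^d d(R+1)(R+2)$.
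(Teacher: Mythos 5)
Your proof is correct, and it takes a genuinely different route from the paper's. You remove the coprimality condition $(\mathbf{b},q)=1$ by additive-character completion modulo $p$, reducing $S^\ast(q,\mathbf{a})$ to complete sums $T_{S,\mathbf{k}}$ over the full box $[1,q]^n$, each with a harmless linear (hence degree $<d$) perturbation, and then apply Lemma~\ref{lemmaboundS} directly. The decisive observation is that $(\mathbf{a},p)=1$ forces $\mathbf{a}/q$ off the major arcs $\mathfrak{M}(q/4)$, which rules out alternative~(ii); the rest is arithmetic with the chosen $\kappa$. The paper instead expands $S^\ast$ by inclusion-exclusion on which coordinates are divisible by $p$, obtaining sums $S(I)$, and must then split into two cases: for $r\ge 2d$ it extracts a $p^d$ factor from the top form to reduce the modulus to $p^{r-d}$ before invoking Lemma~\ref{lemmaboundS}, while for $r<2d$ it either wins trivially (when $|I|\ge 2d(R+2)$) or applies Lemma~\ref{lemmaincrease} to lower-bound $\codim V_{\mathbf{f}}^\ast$ for the system restricted to the free coordinates. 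Both proofs use the same engine (Lemma~\ref{lemmaboundS}) and the same $\kappa$; yours is structurally simpler because the character completion replaces the paper's inclusion-exclusion and case analysis by a single uniform application of the Weyl-type estimate, at the price of carrying the extra additive character term $\mathbf{k}\cdot\mathbf{b}_S/p$, which Lemma~\ref{lemmadif} absorbs since its degree is below $d$ and the implied constants are $G$-independent. One minor point worth making explicit: for the handful of small $q$ where $Q=q/4<1$ the bound from alternative~(i) is weaker than trivial, but the lemma's conclusion $S^\ast(q,\mathbf{a})\ll q^{n-R-2}$ holds trivially for bounded $q$ since the implied constant may depend on $n,d,R$.
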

\begin{proof}
For an index set $I\subseteq [1,n]$, we write $I^c=[1,n]\setminus I$ and define
\begin{align*}S(I):=S(I;p^r,\mathbf{a})=\sum_{\substack{1\le \mathbf{x}_I\le p^{r-1}}}
\sum_{\substack{1\le \mathbf{x}_{I^c}\le p^r}}e\big(
\frac{\mathbf{a}\cdot \mathbf{F}(\mathbf{w})}{p^r}\big),\end{align*}
where the first summation is taken over $x_i$ with $i\in I$,
the second summation is taken over $x_j$ with  $j\in I^c$ and the vector $\mathbf{w}$ means
$\mathbf{w}=(w_1,\ldots,w_n)$ with $w_i=px_i$ for $i\in I$ and $w_j=x_j$ for $j\in I^c$.

We observe
\begin{align*}S^\ast(p^r,\mathbf{a})=\sum_{I}(-1)^{|I|}S(I).\end{align*}
In particular, we have
\begin{align}\label{appearSI}|S^\ast(p^r,\mathbf{a})|\le 2^{n}\sup_{I}|S(I)|.\end{align}

Therefore, we need to establish the upper bound for $S(I)$. Without loss of generality, we assume that
$$I=\{i\in \N:\ 1\le i\le m\},$$
and we write
$$l=n-m.$$
Let
$$\mathbf{y}=(y_1,\ldots,y_{m})\in \Z^{m}\  \textrm{ and } \ \mathbf{z}=(z_1,\ldots,z_{l})\in \Z^{l}.$$
Then we have
\begin{align*}S(I)=\sum_{\substack{1\le \mathbf{y}\le p^{r-1}}}
\sum_{\substack{1\le \mathbf{z}\le p^r}}e\big(
\frac{\mathbf{a}\cdot \mathbf{F}(p\mathbf{y},\mathbf{z})}{p^r}\big).\end{align*}

We first deal with the case $r\ge 2d$. We have
\begin{align*}S(I)=\sum_{\substack{1\le \mathbf{b}\le p}}\sum_{\substack{1\le \mathbf{y}\le p^{r-1}}}
\sum_{\substack{1\le \mathbf{z}\le p^r \\ \mathbf{z}\equiv \mathbf{b}\pmod{p}}}e\big(
\frac{\mathbf{a}\cdot \mathbf{F}(p\mathbf{y},\mathbf{z})}{p^r}\big).\end{align*}
Then we deduce by changing variables that
\begin{align}\label{boundSItoSIb}S(I)=\sum_{\substack{1\le \mathbf{b}\le p}}S_{\mathbf{b}}(I),\end{align}
where
\begin{align*}S_{\mathbf{b}}(I)=\sum_{\substack{1\le \mathbf{y}\le p^{r-1}}}
\sum_{\substack{1\le \mathbf{z}\le p^{r-1} }}e\big(
\frac{\mathbf{a}\cdot \mathbf{F}(p\mathbf{y},p\mathbf{z}+\mathbf{b})}{p^r}\big).\end{align*}
Note that $S_{\mathbf{b}}(I)$ can be represented in the form
\begin{align*}S_{\mathbf{b}}(I)=\sum_{\substack{1\le \mathbf{y}\le p^{r-1}}}
\sum_{\substack{1\le \mathbf{z}\le p^{r-1} }}e\big(
\frac{\mathbf{a}\cdot \mathbf{F}(\mathbf{y},\mathbf{z})}{p^{r-d}}+G_{\mathbf{b},p,\mathbf{a}}(\mathbf{y},\mathbf{z})\big),\end{align*}
where as a polynomial of $(\mathbf{y},\mathbf{z})$, the degree of $G_{\mathbf{b},p,\mathbf{a}}=G_{\mathbf{b},p,\mathbf{a}}(\mathbf{y},\mathbf{z})$ is smaller than $d$. Let
\begin{align*}\kappa= 2^{d}d(R+1)(R+2),\end{align*}
Then by Lemma \ref{lemmaboundS}, we have
\begin{align}\label{boundSI1}S_{\mathbf{b}}(I)\ll p^{n(r-1)}q^{\varepsilon}p^{-(r-d)\frac{\kappa}{2^{d-1}(d-1)R}}.\end{align}
In the case $r\ge 2d$, we have $r-d\ge \frac{r}{2}$ and by \eqref{boundSI1},
\begin{align}\label{boundSI2}S_{\mathbf{b}}(I)\ll p^{n(r-1)}q^{\varepsilon-\frac{\kappa}{2^{d}(d-1)R}}.\end{align}
Inserting \eqref{boundSI2} into \eqref{boundSItoSIb}, we obtain
\begin{align*}S(I)\ll q^{n+\varepsilon-\frac{\kappa}{2^{d}R(d-1)}}\ll q^{n-R-2}.\end{align*}

We next deal with the case $r<2d$. We have the trivial bound
\begin{align*}S(I)\ll p^{nr-m}\ll p^{nr-\frac{1}{2d}mr}\ll q^{n-\frac{1}{2d}m}.\end{align*}
Thus if $m\ge 2d(R+2)$, then $S(I)\ll q^{n-R-2}$. Now we consider the case $m<2d(R+2)$.
We represent $\mathbf{F}(p\mathbf{y},\mathbf{z})$ as
$$\mathbf{F}(p\mathbf{y},\mathbf{z})=\mathbf{f}(\mathbf{z})+\mathbf{g}(\mathbf{z},\mathbf{y}),$$
where the degree of $g_i$ as a polynomial of $\mathbf{z}$ is smaller than $d$.

We deduce by Lemma \ref{lemmaincrease} that
$$\codim V_{\mathbf{f}}^\ast\ge \kappa-2d(R+2)(R+1)\ge 2^{d-1}dR(R+2).$$
 Then by Lemma \ref{lemmaboundS}, we have
\begin{align*}
\sum_{\substack{1\le \mathbf{z}\le p^r}}e\big(
\frac{\mathbf{a}\cdot \mathbf{F}(p\mathbf{y},\mathbf{z})}{p^r}\big)\ll q^{l+\varepsilon-\frac{d(R+2)}{d-1}},\end{align*}
and therefore,
\begin{align*}S(I)\ll q^{n-R-2}.\end{align*}

We have established the estimate $S(I)\ll q^{n-R-2}$ (in all cases) and by \eqref{appearSI},
 \begin{align*}S^\ast(q,\mathbf{a})\ll q^{n-2-R}.\end{align*}
This completes the proof.
\end{proof}

\begin{lemma}\label{lemmaboundAq}Let $q\ge 1$. If \eqref{Gausssumcondition} holds,
then one has
\begin{align}\label{boundAq}\mathcal{A}^\ast(q)\ll q^{n-2+\varepsilon}.\end{align}
\end{lemma}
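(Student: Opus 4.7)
The plan is to combine the multiplicativity of Lemma \ref{lemmamultiplicative} with the prime power bound of Lemma \ref{lemmaVgauss}, reducing the estimate for general $q$ to a product of local estimates.

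First I would handle the prime power case. Fix $q = p^r$ with $p$ prime and $r \ge 1$. By Lemma \ref{lemmaVgauss}, whenever $(a_1,\ldots,a_R,p)=1$ we have $S^\ast(p^r,\mathbf{a}) \ll p^{r(n-2-R)}$. The number of tuples $\mathbf{a}=(a_1,\ldots,a_R)$ with $1\le \mathbf{a}\le p^r$ and $(a_1,\ldots,a_R,p)=1$ is at most $p^{rR}$. Summing the pointwise bound over all such $\mathbf{a}$ yields
\begin{align*}
\mathcal{A}^\ast(p^r) \ll p^{rR}\cdot p^{r(n-2-R)} = p^{r(n-2)} = q^{n-2},
\end{align*}
with an implied constant $C=C_{d,R,n}$ independent of $p$ and $r$.

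Next I would lift this to arbitrary $q$. Factor $q = p_1^{r_1}\cdots p_k^{r_k}$ into prime powers. Iterating Lemma \ref{lemmamultiplicative} gives
\begin{align*}
\mathcal{A}^\ast(q) = \prod_{j=1}^{k}\mathcal{A}^\ast(p_j^{r_j}),
\end{align*}
and hence
\begin{align*}
|\mathcal{A}^\ast(q)| \le C^{\omega(q)}\prod_{j=1}^{k}p_j^{r_j(n-2)} = C^{\omega(q)}q^{n-2},
\end{align*}
where $\omega(q)$ denotes the number of distinct prime divisors of $q$. The standard bound $\omega(q) \ll \log q/\log\log q$ implies $C^{\omega(q)} \ll q^\varepsilon$, which gives the desired estimate \eqref{boundAq}.

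There is no real obstacle here: the only non-trivial input is Lemma \ref{lemmaVgauss}, which is already established under hypothesis \eqref{Gausssumcondition}; the remainder is bookkeeping with multiplicativity and the divisor-type bound for $\omega(q)$. The mild subtlety is that Lemma \ref{lemmaVgauss} is stated with the coprimality condition $(a_1,\ldots,a_R,p)=1$, while $\mathcal{A}^\ast(q)$ imposes $(a_1,\ldots,a_R,q)=1$; since the latter implies the former at each prime $p\mid q$, the pointwise bound applies to each local factor after the multiplicative decomposition.
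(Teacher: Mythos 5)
Your proof is correct and follows the same route the paper takes (which it states in two sentences): Lemma \ref{lemmaVgauss} supplies the prime-power bound $\mathcal{A}^\ast(p^r)\ll p^{r(n-2)}$ after counting the at most $p^{rR}$ admissible tuples $\mathbf{a}$, and Lemma \ref{lemmamultiplicative} lifts it to general $q$. You have simply made explicit the bookkeeping step $C^{\omega(q)}\ll q^{\varepsilon}$, which the paper leaves implicit; no further comment is needed.
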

\begin{proof}If $q$ is a power of a prime, then the inequality \eqref{boundAq} follows from Lemma \ref{lemmaVgauss}. In view of Lemma \ref{lemmamultiplicative}, we obtain \eqref{boundAq} for all $q$.\end{proof}

Now we are able to define the singular series subject to the condition \eqref{Gausssumcondition}. Let
\begin{align}\label{definesingularseries}\mathfrak{S}_{\mathbf{F}}^\ast=\sum_{q=1}^\infty \frac{1}{\phi^n(q)}
\mathcal{A}^\ast(q).\end{align}

 The singular integral in this paper is as the same as that in Birch's work \cite{Birch}.
 Thus, we introduce the singular integral briefly.

Define
\begin{align*}\upsilon(\boldsymbol{\beta})=\int_{\mathfrak{B}}e\big(\boldsymbol{\beta}\cdot \mathbf{F}(\mathbf{x})\big)d\mathbf{x}.\end{align*}
Subject to \eqref{Gausssumcondition}, we have by Lemma 5.2 of Birch \cite{Birch} that
\begin{align*}\upsilon(\boldsymbol{\beta})\ll (1+|\boldsymbol{\beta}|_{\max})^{-R-2},\end{align*}
where $|\boldsymbol{\beta}|_{\max}=\max\{|\beta_1|,\ldots,|\beta_R|\}$.

 Now we define the singular integral
\begin{align}\label{definesingularintegral}\mathfrak{I}_{\mathbf{F}}=\lim_{Q\rightarrow +\infty}\int_{|\boldsymbol{\beta}|\le Q}v(\boldsymbol{\beta})d\boldsymbol{\beta}.\end{align}
The following result is Lemma 5.3 of Birch \cite{Birch}.
\begin{lemma}\label{lemmaapproximateI}Suppose that \eqref{Gausssumcondition} holds. Then we have
\begin{align*}\Big|\mathfrak{I}_{\mathbf{F}}-\int_{|\boldsymbol{\beta}|\le Q}v(\boldsymbol{\beta})d\boldsymbol{\beta}\Big|\ll Q^{-\frac{1}{2}}.\end{align*}
\end{lemma}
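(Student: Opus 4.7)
The plan is to convert the statement into a tail estimate on the singular integral and to then dispatch that tail by direct integration of Birch's pointwise bound on $\upsilon(\boldsymbol{\beta})$. Concretely, the bound
$$\upsilon(\boldsymbol{\beta})\ll (1+|\boldsymbol{\beta}|_{\max})^{-R-2}$$
quoted just before the lemma (from Birch \cite{Birch}, Lemma 5.2) is absolutely integrable over $\R^R$, since $(1+|\boldsymbol{\beta}|_{\max})^{-R-2}$ decays faster than the volume growth. Hence the limit in \eqref{definesingularintegral} exists, $\mathfrak{I}_{\mathbf{F}}$ is a genuine absolutely convergent integral, and for every $Q\ge 1$ one has
$$\mathfrak{I}_{\mathbf{F}}-\int_{|\boldsymbol{\beta}|\le Q}\upsilon(\boldsymbol{\beta})d\boldsymbol{\beta}=\int_{|\boldsymbol{\beta}|>Q}\upsilon(\boldsymbol{\beta})d\boldsymbol{\beta},$$
where the difference of the balls is interpreted either in the Euclidean or in the maximum norm (the two norms differ by a factor depending only on $R$, which is harmless).

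The core step is to estimate the tail
$$\int_{|\boldsymbol{\beta}|>Q}|\upsilon(\boldsymbol{\beta})|\,d\boldsymbol{\beta}\ll \int_{|\boldsymbol{\beta}|_{\max}>cQ}(1+|\boldsymbol{\beta}|_{\max})^{-R-2}\,d\boldsymbol{\beta}$$
for a suitable absolute constant $c=c(R)>0$. I would carry this out by a dyadic decomposition $2^j Q\le |\boldsymbol{\beta}|_{\max}< 2^{j+1}Q$ (or, equivalently, a one-dimensional integration in the radial variable $t=|\boldsymbol{\beta}|_{\max}$), using the elementary fact that the $R$-dimensional Lebesgue measure of the shell $\{t\le |\boldsymbol{\beta}|_{\max}\le t+dt\}$ is $O(t^{R-1})\,dt$. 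This yields
$$\int_{|\boldsymbol{\beta}|_{\max}>cQ}(1+|\boldsymbol{\beta}|_{\max})^{-R-2}\,d\boldsymbol{\beta}\ll \int_{cQ}^{\infty}t^{R-1}(1+t)^{-R-2}\,dt\ll \int_{cQ}^{\infty}t^{-3}\,dt\ll Q^{-2}.$$
The resulting bound $O(Q^{-2})$ is in fact much stronger than the $O(Q^{-1/2})$ asserted in the lemma, so after weakening we obtain the stated inequality.

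There is no real obstacle here: the exponent $-R-2$ in Birch's bound is precisely chosen so that $\upsilon\in L^1(\R^R)$ with a quantitative tail, and the lemma only requests a rate of $Q^{-1/2}$, which follows immediately. The only point that calls for mild care is keeping track of the distinction between $|\boldsymbol{\beta}|$ and $|\boldsymbol{\beta}|_{\max}$ when transferring the ball in the statement to a box suitable for Birch's bound; this is handled by the constant $c(R)$ introduced above, and does not affect the final exponent.
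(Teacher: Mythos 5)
Your proof is correct and follows the same route as Birch's Lemma~5.3, which the paper simply cites rather than reproducing: one integrates the pointwise decay $\upsilon(\boldsymbol{\beta}) \ll (1+|\boldsymbol{\beta}|_{\max})^{-R-2}$ (stated just before the lemma) over the tail $|\boldsymbol{\beta}| > Q$, using the $O(t^{R-1})\,dt$ growth of the max-norm shells. The $O(Q^{-2})$ you obtain is in fact stronger than the stated $O(Q^{-1/2})$; the weaker exponent is what Birch proves under his original, weaker hypotheses on the codimension, and this paper quotes his statement verbatim because $Q^{-1/2}$ already suffices for the application in Lemma~\ref{lemmamajorcontribution}.
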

\vskip3mm

\section{Proof of Theorem \ref{theorem2}}

On recalling the generating function $S_{\mathbf{F}}(\boldsymbol{\alpha})$ introduced in \eqref{definegeneratingS} and \eqref{chooselambda}, we conclude that
\begin{align}\label{circle}N_{\mathbf{F}}(P)=\int_{0}^1S_{\mathbf{F}}(\boldsymbol{\alpha})d\boldsymbol{\alpha},\end{align}

We define \begin{align}\label{definecalL}\mathcal{L}=(\log P)^{A_0}\end{align} with $A_0>0$ sufficiently large. Then we introduce the major arcs and minor arcs
\begin{align*}\mathfrak{M}=\mathfrak{M}(\mathcal{L}) \ \textrm{ and } \mathfrak{m}=\mathfrak{m}(\mathcal{L}),\end{align*}
where $\mathfrak{M}(\mathcal{L})$ and $\mathfrak{m}(\mathcal{L})$ are given in \eqref{defineMQ} and \eqref{definemQ}, respectively.

With above notations, we conclude that
\begin{align}\label{circleMm}N_{\mathbf{F}}(P)=\int_{\mathfrak{M}}S_{\mathbf{F}}(\boldsymbol{\alpha})d\boldsymbol{\alpha}+
\int_{\mathfrak{m}}S_{\mathbf{F}}(\boldsymbol{\alpha})d\boldsymbol{\alpha}.\end{align}

We first deal with the contribution from the major arcs.
\begin{lemma}\label{lemmamajorcontribution}Suppose that \eqref{Gausssumcondition} holds. Then we have
\begin{align}\label{asympmajor}\int_{\mathfrak{M}}S_{\mathbf{F}}(\boldsymbol{\alpha})d\boldsymbol{\alpha}=
\mathfrak{S}_{\mathbf{F}}^\ast \mathfrak{I}_{\mathbf{F}}P^{n-Rd}+O(P^{n-Rd}\mathcal{L}^{-\frac{1}{2}}).\end{align}\end{lemma}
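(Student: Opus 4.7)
The plan is to run the standard major arc analysis for prime-weighted exponential sums, with the Siegel--Walfisz theorem providing the necessary power-savings in the modulus $q \le \mathcal{L}$. First I would use that, under the condition $\mathcal{L} \le \tfrac{1}{4}P^{d/2}$ (which holds for $P$ large since $\mathcal{L}$ is a power of $\log P$), the arcs $\mathfrak{M}(q,\mathbf{a};\mathcal{L})$ are pairwise disjoint, so
\[
\int_{\mathfrak{M}} S_{\mathbf{F}}(\boldsymbol{\alpha})\,d\boldsymbol{\alpha}
= \sum_{q\le \mathcal{L}}\sum_{\substack{1\le \mathbf{a}\le q\\ (\mathbf{a},q)=1}}
\int_{|\boldsymbol{\beta}|\le \mathcal{L}/(qP^d)} S_{\mathbf{F}}\bigl(\tfrac{\mathbf{a}}{q}+\boldsymbol{\beta}\bigr)\,d\boldsymbol{\beta}.
\]

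The next step is to decompose the variables by residue classes modulo $q$. Since $\mathbf{F}$ has integer coefficients, $\mathbf{F}(\mathbf{x})\equiv \mathbf{F}(\mathbf{b})\pmod q$ whenever $\mathbf{x}\equiv \mathbf{b}\pmod q$, giving
\[
S_{\mathbf{F}}\bigl(\tfrac{\mathbf{a}}{q}+\boldsymbol{\beta}\bigr)
= \sum_{1\le \mathbf{b}\le q} e\!\left(\tfrac{\mathbf{a}\cdot \mathbf{F}(\mathbf{b})}{q}\right)\!
\sum_{\substack{\mathbf{x}\in P\mathfrak{B}\\ \mathbf{x}\equiv \mathbf{b}\pmod q}} \lambda(\mathbf{x})\, e(\boldsymbol{\beta}\cdot \mathbf{F}(\mathbf{x})).
\]
Since $\Lambda(x_i)$ is concentrated on prime powers and the contribution of true prime powers $p^k$ with $k\ge 2$ is $O(P^{1/2}\log P)$ per variable, only $\mathbf{b}$ with $(\mathbf{b},q)=1$ contribute to the main term, which recovers the Gauss sum $S^\ast(q,\mathbf{a})$ of \eqref{definegausssum} after the approximation below.

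The heart of the argument is to replace the inner multi-dimensional prime sum by its integral counterpart. Applying Siegel--Walfisz together with partial summation one variable at a time (the phase $e(\boldsymbol{\beta}\cdot \mathbf{F}(\mathbf{x}))$ viewed as a smooth function in $x_i$ has derivative bounded by $|\boldsymbol{\beta}|P^{d-1}\ll \mathcal{L}/P$, hence its total variation in $x_i$ is $\ll \mathcal{L}$), one obtains uniformly for $q\le \mathcal{L}$ and $(\mathbf{b},q)=1$:
\[
\sum_{\substack{\mathbf{x}\in P\mathfrak{B}\\ \mathbf{x}\equiv \mathbf{b}\pmod q}} \lambda(\mathbf{x})\, e(\boldsymbol{\beta}\cdot \mathbf{F}(\mathbf{x}))
= \frac{1}{\phi(q)^n}\int_{P\mathfrak{B}} e(\boldsymbol{\beta}\cdot \mathbf{F}(\mathbf{y}))\,d\mathbf{y}
+ O\!\bigl(P^n \exp(-c\sqrt{\log P})\bigr),
\]
with $c>0$ absolute. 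Summing the error over $\mathbf{b}$, over $\mathbf{a}$, integrating over $\boldsymbol{\beta}$ of measure $\ll (\mathcal{L}/(qP^d))^R$ and over $q\le \mathcal{L}$, the aggregate error is $\ll P^{n-Rd}\mathcal{L}^{2R+1}\exp(-c\sqrt{\log P}) \ll P^{n-Rd}\mathcal{L}^{-1/2}$, which is negligible.

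Finally I rescale $\mathbf{y}=P\mathbf{x}$ and $\boldsymbol{\gamma}=P^d\boldsymbol{\beta}$ to isolate the factor $P^{n-Rd}\upsilon(\boldsymbol{\gamma})$, then sum over $\mathbf{a}$ (using \eqref{defineAq}) and over $q$. Using Lemma \ref{lemmaboundAq} with the hypothesis $n\ge 4^{d+2}d^2R^5$ (which easily implies \eqref{Gausssumcondition}), the tail $q>\mathcal{L}$ of the singular series contributes $\ll \mathcal{L}^{-1}$, and by Lemma \ref{lemmaapproximateI} the truncated singular integral on $|\boldsymbol{\gamma}|\le \mathcal{L}/q$ differs from $\mathfrak{I}_{\mathbf{F}}$ by $\ll (\mathcal{L}/q)^{-1/2}$. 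Combining these yields \eqref{asympmajor}. The main obstacle is the uniform Siegel--Walfisz replacement in the presence of the coupling phase $e(\boldsymbol{\beta}\cdot \mathbf{F}(\mathbf{x}))$; this is handled, as sketched, by iterated partial summation in each coordinate, exploiting that the phase varies slowly on the scale of the modulus for $q\le \mathcal{L}$.
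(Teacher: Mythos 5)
Your proposal follows the same route as the paper: decompose into major arcs by modulus, use Siegel--Walfisz together with iterated partial summation in each coordinate to replace the prime-weighted exponential sum by $\phi(q)^{-n}S^\ast(q,\mathbf{a})I_P(\boldsymbol{\beta})$ plus a negligible error (the paper's equation \eqref{Saqbeta}), rescale to isolate $P^{n-Rd}\upsilon(\boldsymbol{\beta})$, and then invoke Lemma~\ref{lemmaboundAq} and Lemma~\ref{lemmaapproximateI} to complete the singular series and singular integral. Two harmless imprecisions: the aggregate error should carry a factor roughly $\mathcal{L}^{n+R+1}$ rather than $\mathcal{L}^{2R+1}$ since you sum the per-class error over all $\phi(q)^n$ residue classes $\mathbf{b}$ (this does not affect the conclusion, as $\exp(-c\sqrt{\log P})$ swamps any fixed power of $\mathcal{L}$), and the lemma's only hypothesis is \eqref{Gausssumcondition} itself, so there is no need to invoke the stronger $n\ge 4^{d+2}d^2R^5$.
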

\begin{proof}By the definition of the major arcs $\mathfrak{M}$,  we have
\begin{align}\label{inttosum}\int_{\mathfrak{M}}S_{\mathbf{F}}(\boldsymbol{\alpha})d\boldsymbol{\alpha}=
\sum_{q\le \mathcal{L}}\sum_{\substack{1\le \mathbf{a}\le q\\ (a_1,\ldots,a_R,q)=1}}\int_{|\boldsymbol{\beta}|\le \frac{\mathcal{L}}{qP^d}}
S_{\mathbf{F}}(\frac{\mathbf{a}}{q}+\boldsymbol{\beta})d\boldsymbol{\beta}.\end{align}
By the standard application of the Siegel-Walfisz theorem and the partial summation formula, for $q\le \mathcal{L}$ and $|\boldsymbol{\beta}|\le \mathcal{L}(qP^d)^{-1}$, we can deduce that
\begin{align}\label{Saqbeta}S_{\mathbf{F}}(\frac{\mathbf{a}}{q}+\boldsymbol{\beta})=\frac{1}{\phi(q)^n}S^\ast(q,\mathbf{a})
I_P(\boldsymbol{\beta})+O(P^{n}\mathcal{L}^{-101-R}),\end{align}
where the Gauss sum $S^\ast(q,\mathbf{a})$ is given in \eqref{definegausssum} and
\begin{align*}I_P(\boldsymbol{\beta})=\int_{P\cdot \mathfrak{B}}e\big(\boldsymbol{\beta}\cdot\mathbf{F}(\mathbf{x})\big)d\mathbf{x}.\end{align*}

We put \eqref{Saqbeta} into \eqref{inttosum} to obtain
\begin{align*}\int_{\mathfrak{M}}S_{\mathbf{F}}(\boldsymbol{\alpha})d\boldsymbol{\alpha}=&
\sum_{q\le \mathcal{L}}\frac{1}{\phi(q)^n}\sum_{\substack{1\le \mathbf{a}\le q\\ (a_1,\ldots,a_R,q)=1}}S^\ast(q,a)\int_{|\boldsymbol{\beta}|\le \frac{\mathcal{L}}{qP^d}}I_P(\boldsymbol{\beta})d\boldsymbol{\beta}
\\ & \ \ +O(P^{n-Rd}\mathcal{L}^{-100}).\end{align*}
Invoking the notation \eqref{defineAq}, we get
\begin{align}\label{invokingAq}\int_{\mathfrak{M}}S_{\mathbf{F}}(\boldsymbol{\alpha})d\boldsymbol{\alpha}=
\sum_{q\le \mathcal{L}}\frac{1}{\phi(q)^n}\mathcal{A}^\ast(q)\int_{|\boldsymbol{\beta}|\le \frac{\mathcal{L}}{qP^d}}I_P(\boldsymbol{\beta})d\boldsymbol{\beta}+O(P^{n-Rd}\mathcal{L}^{-100}).\end{align}

By changing variables, we have
\begin{align*}I_{P}(\boldsymbol{\beta})=P^{n}\upsilon(P^d\boldsymbol{\beta}).\end{align*}
Another change of variables implies
\begin{align}\label{changingvariables}\int_{|\boldsymbol{\beta}|\le \frac{\mathcal{L}}{qP^d}}I_P(\boldsymbol{\beta})d\boldsymbol{\beta} =P^{n-Rd}\int_{|\boldsymbol{\beta}|\le \frac{\mathcal{L}}{q}}\upsilon(\boldsymbol{\beta})d\boldsymbol{\beta}.\end{align}
We deduce by \eqref{invokingAq} and \eqref{changingvariables} that
\begin{align}\label{afterchange}\int_{\mathfrak{M}}S_{\mathbf{F}}(\boldsymbol{\alpha})d\boldsymbol{\alpha}=
P^{n-Rd}\sum_{q\le \mathcal{L}}\frac{1}{\phi(q)^n}\mathcal{A}^\ast(q)\int_{|\boldsymbol{\beta}|\le \frac{\mathcal{L}}{q}}\upsilon(\boldsymbol{\beta})d\boldsymbol{\beta}+O(P^{n-Rd}\mathcal{L}^{-100}).\end{align}

Now we make use of Lemma  \ref{lemmaapproximateI} to conclude
\begin{align}\label{inssingularint}\int_{|\boldsymbol{\beta}|\le \frac{\mathcal{L}}{q}}\upsilon(\boldsymbol{\beta})d\boldsymbol{\beta}=
\mathfrak{I}_{\mathbf{F}}+O((q\mathcal{L}^{-1})^{\frac{1}{2}}).\end{align}
Then we deduce from \eqref{afterchange} and \eqref{inssingularint} that  \begin{align*}\int_{\mathfrak{M}}S_{\mathbf{F}}(\boldsymbol{\alpha})d\boldsymbol{\alpha}=
P^{n-Rd}\sum_{q\le \mathcal{L}}\frac{1}{\phi(q)^n}\mathcal{A}^\ast(q) \mathfrak{I}_{\mathbf{F}}+P^{n-Rd}\sum_{q\le \mathcal{L}}\frac{O((q\mathcal{L}^{-1})^{\frac{1}{2}})}{\phi(q)^n}|\mathcal{A}^\ast(q)|.\end{align*}
We apply Lemma \ref{lemmaboundAq} to conclude
\begin{align*}\sum_{q\le \mathcal{L}}\frac{q^{\frac{1}{2}}}{\phi(q)^n}|\mathcal{A}^\ast(q)|\ll 1,\end{align*}
 and therefore,
\begin{align}\label{applygauss1}\int_{\mathfrak{M}}S_{\mathbf{F}}(\boldsymbol{\alpha})d\boldsymbol{\alpha}=
P^{n-Rd}\sum_{q\le \mathcal{L}}\frac{1}{\phi(q)^n}\mathcal{A}^\ast(q) \mathfrak{I}_{\mathbf{F}}+O(P^{n-Rd}\mathcal{L}^{-\frac{1}{2}}).\end{align}
Applying Lemma \ref{lemmaboundAq} again, we obtain
\begin{align*}\sum_{q\le \mathcal{L}}\frac{1}{\phi(q)^n}\mathcal{A}^\ast(q)=
\sum_{q=1}^\infty\frac{1}{\phi(q)^n}\mathcal{A}^\ast(q)+O(\mathcal{L}^{-1+\varepsilon}).\end{align*}
On recalling the definition of the singular integral $\mathfrak{S}_{\mathbf{F}}^\ast$ in \eqref{definesingularintegral}, we obtain
\begin{align}\label{insertingsingularseries}\sum_{q\le \mathcal{L}}\frac{1}{\phi(q)^n}\mathcal{A}^\ast(q)=\mathfrak{S}_{\mathbf{F}}^\ast
+O(\mathcal{L}^{-1+\varepsilon}).\end{align}
We complete the proof by noting that \eqref{applygauss1} and \eqref{insertingsingularseries} together yield \eqref{asympmajor}.
\end{proof}

\noindent {\it Proof of Theorem \ref{theorem2}}. We deduce from Lemma \ref{lemmaoverminor2} that
\begin{align}\label{minor}\int_{\mathfrak{m}(\mathcal{L})}S_\mathbf{F}(\boldsymbol{\alpha})d\boldsymbol{\alpha} \ll P^{n-Rd}(\log P)^{7n}\mathcal{L}^{-\delta}\end{align}
for some small $\delta=\delta_{d,R}>0$.  Then we conclude from \eqref{lemmamajorcontribution}, \eqref{circleMm} and \eqref{minor} that
\begin{align*}N_{\mathbf{F}}(P)=
\mathfrak{S}_{\mathbf{F}}^\ast \mathfrak{I}_{\mathbf{F}}P^{n-Rd}+O\big(P^{n-Rd}(\log P)^{7n}\mathcal{L}^{-\delta}\big).\end{align*}
We complete the proof of Theorem \ref{theorem2} by choosing $A_0$ sufficiently large in \eqref{definecalL}.

% {\bf Acknowledgement}.

\vskip9mm

\vskip4mm
\end{document}